\newcommand{\beq}{\begin{equation}}
\newcommand{\eeq}{\end{equation}}
\newcommand{\bea}{\begin{eqnarray}}
\newcommand{\eea}{\end{eqnarray}}
\newcommand{\beas}{\begin{eqnarray*}}
\newcommand{\eeas}{\end{eqnarray*}}
\newtheorem{theorem}{Theorem}[section]
\newtheorem{lemma}[theorem]{Lemma}
\newtheorem{coroll}[theorem]{Corollary}
\newtheorem{prop}[theorem]{Proposition}
\newtheorem{definition}[theorem]{Definition}
\newtheorem{remark}[theorem]{Remark}
\newtheorem{ass}[theorem]{Assumption}
\newcommand{\pd}[2]{\frac{\partial#1}{\partial#2}}
\def\RR{\mathbb R}
\def\EE{\mathsf E}
\def\PP{\mathsf P}
\def\cC{\mathcal C}
\def\cF{{\cal F}}
\def\cS{{\cal S}}
\def\theequation{\arabic{section}.\arabic{equation}}
\begin{document}

\title{A solvable two-dimensional singular stochastic control problem with non convex costs\footnote{ The first and the third authors were supported by EPSRC grant EP/K00557X/1; financial support by the German Research Foundation (DFG) via grant Ri--1128--4--2 is gratefully acknowledged by the second author.}}

\author{Tiziano De Angelis\thanks{School of Mathematics, The University of Leeds, Woodhouse Lane, Leeds LS2 9JT, United Kingdom; \texttt{t.deangelis@leeds.ac.uk}}\:\:\:\:Giorgio Ferrari\thanks{Center for Mathematical Economics, Bielefeld University, Universit\"atsstrasse 25, D-33615 Bielefeld, Germany; \texttt{giorgio.ferrari@uni-bielefeld.de}}\:\:\:\:John Moriarty\thanks{School of Mathematical Sciences, Queen Mary University of London, Mile End Road, London E1 4NS, United Kingdom; \texttt{j.moriarty@qmul.ac.uk}}}

\date{\today}
\maketitle

\textbf{Abstract.} In this paper we provide a complete theoretical analysis of a two-dimensional degenerate non convex singular stochastic control problem. The optimisation is motivated by a storage-consumption model in an electricity market, and features a stochastic real-valued spot price modelled by Brownian motion. We find analytical expressions for the value function, the optimal control and the boundaries of the \emph{action} and \emph{inaction} regions. The optimal policy is characterised in terms of two monotone and discontinuous repelling free boundaries, although part of one boundary is constant and the smooth fit condition holds there.
\medskip

{\textbf{Keywords}}: finite-fuel singular stochastic control; optimal stopping; free boundary; 
Hamilton-Jacobi-Bellman equation; irreversible investment; electricity market.

\smallskip

{\textbf{MSC2010 subsject classification}}: 91B70, 93E20, 60G40, 49L20.




\section{Introduction}
\label{Introduction}

Consider the following problem introduced in \cite{DeAFeMo14}: a firm purchases electricity over time at a stochastic spot price $(X_t)_{t \geq 0}$ for the purpose of storage in a battery. The battery must be full at a random terminal time $\tau$, and any deficit leads to a terminal cost given by the product of a convex function $\Phi$ of the undersupply and the terminal spot price $X_{\tau}$. The terminal cost accounts for the use of a quicker but less efficient charging method at the time $\tau$ of demand, while the restriction to purchasing is interpreted as the firm not having necessary approval to sell electricity to the grid.

Taking $X$ as a real-valued Markov process carried by a complete probability space $(\Omega,\cF,\PP)$, and letting $\tau$ be independent of $X$ and exponentially distributed with parameter $\lambda>0$, it is shown in Appendix A of \cite{DeAFeMo14} that this optimal charging problem is equivalent to solving 
\beq
\label{problem-intro}
U(x,c)=\inf_{\nu}\EE\bigg[\int_0^{\infty}e^{-\lambda t}\lambda X^x_t\Phi(c +\nu_{t})dt + \int^{\infty}_0 {e^{-\lambda t}X^x_t\,d{\nu}_t} \bigg],\qquad (x,c) \in \RR \times [0,1].
\eeq
Here $\Phi$ is taken to be a strictly convex, twice continuously differentiable, decreasing function and the infimum is taken over a suitable class of nondecreasing controls $\nu$ such that $c +\nu_t \leq 1$, $\PP$-a.s.\ for all $t \geq 0$. The control $\nu_t$ is the cumulative amount of energy purchased up to time $t$ and $c +\nu_t$ represents the inventory level at time $t$ of the battery whose inventory level is $c$ at time $0$. The \emph{finite fuel} constraint $c + \nu_t \leq 1$, $c\in [0,1]$, $\PP$-a.s.\ for all $t\geq0$, reflects the fact that the battery has limited total capacity.

Certain deregulated electricity markets with renewable generation exhibit periods of negative electricity price, due to the requirement to balance real-time supply and demand. Such negative prices are understood to arise from a combination of the priority given to highly variable renewable generation, together with the short-term relative inflexibility of traditional thermal generation units \cite{Genoese}, \cite{Epex}. In order to capture this feature, which is unusual in other areas of mathematical finance, we assume a one-dimensional spot price $X$ taking negative values with positive probability. In \cite{DeAFeMo14} $X$ is an Ornstein-Uhlenbeck (OU) process. In the present paper, with the aim of a full theoretical investigation, we take a more canonical example letting $X$ be a Brownian motion and we completely solve problem \eqref{problem-intro}.

From the mathematical point of view, \eqref{problem-intro} falls into the class of singular stochastic control (SSC) problems. The associated Hamilton-Jacobi-Bellman (HJB) equation is formulated as a two-dimensional degenerate variational problem with a state-dependent gradient constraint. The problem is degenerate because the control acts in a direction of the state space which is orthogonal to the diffusion. It is worth mentioning that explicit solutions of problems with state-dependent gradient constraints are relatively rare in the literature (a recent contribution is \cite{GuoZervos15}) when compared to problems with constant constraints on the gradient and one or two-dimensional state space (see for example \cite{Alvarez01} and \cite{MehriZervos} amongst others).

As also noted in \cite{DeAFeMo14}, a key peculiarity of our problem is that the total expected cost functional which we want to minimise in \eqref{problem-intro} is \emph{non convex} with respect to the control variable $\nu$. In particular, by recalling that $X$ is real-valued and simply writing it as the difference of its positive and negative part, it is easy to see that the cost functional in \eqref{problem-intro} can be written as a d.c.\ functional, i.e.\ as the difference of two functionals convex with respect to $\nu$ (see \cite{HorstPardalosThoai} or \cite{HorstThoai} for references on d.c.\ functions). SSC problems which are convex with respect to $\nu$ are of particular interest since they typically have optimal controls of {\em reflecting} type, leading in turn to a certain {\em differential} connection to problems of optimal stopping (OS), see for example \cite{ElKK88} and \cite{KaratzasShreve84}. Clearly, however, the d.c.\ property of the functional in \eqref{problem-intro} means that problem \eqref{problem-intro} does not fall directly into this setting. Indeed the study in \cite{DeAFeMo14}, where the uncontrolled process $X$ is of OU type, reveals how the non-convexity of the cost criterion impacts in a complex way on the structure of the optimal control and on the connection between SSC problems and OS ones. It is shown in \cite{DeAFeMo14} that while connections to OS do hold for problem \eqref{problem-intro}, they may or may not be of differential type depending on parameter values and the initial inventory level $c$. This suggests that the solutions of two-dimensional degenerate problems of this kind are complex and should be considered case by case.

In particular, the analysis in \cite{DeAFeMo14} identifies three regimes, two of which are solved and the third of which is left as an open problem under the OU dynamics. Here we aim at a complete solution of \eqref{problem-intro} and address the third regime of \cite{DeAFeMo14} in the Brownian case. Such a complete solution also gives some insight in the open case of \cite{DeAFeMo14} since Brownian motion is a special case of OU with null rate of mean reversion. The geometric methodology we employ in this paper (see Figures \ref{fig:H1} and \ref{fig:H2}) is a significant departure from that in \cite{DeAFeMo14}. In Section \ref{sec:auxiliary} below we rely on the characterisation via concavity of excessive functions for Brownian motion introduced in \cite{Dynkin}, Chapter 3 (later expanded in \cite{DayKar}) to study a parameterised family of OS problems. It is thanks to this characterisation that we succeed in obtaining the necessary monotonicity and regularity results for the optimal boundaries of the action region associated to \eqref{problem-intro} (i.e.\ the region in which it is profitable to exert control). In contrast to the OU case, the Laplace transforms of the hitting times of Brownian motion are available in closed form and it is this feature which ultimately enables the method of the present paper.

We show that the action region of problem \eqref{problem-intro} is disconnected. It is characterised in terms of two boundaries which we denote below by $c \mapsto \hat{\beta}(c)$ and $c \mapsto \hat{\gamma}(c)$ which are discontinuous, the former being non-increasing everywhere but at a single jump and the latter being non-decreasing with a vertical asymptote (see Fig.~\ref{fig:1}). Through a verification argument we are able to show that {the optimal} control always acts by inducing {discontinuities} in the state process. 
The boundaries $\hat{\beta}$ and $\hat{\gamma}$ are therefore \emph{repelling} (in the terminology of \cite{DZ} or \cite{KOWZ}). However, in contrast with most known examples of repelling boundaries, if the optimally controlled process hits the upper boundary $\hat{\beta}$ the controller does not immediately exercise all available control but, rather, causes the inventory level to jump to a \emph{critical level} $\hat{c}\in(0,1)$ (which coincides with the point of discontinuity of the upper boundary $c \mapsto \hat \beta(c)$). After this jump the optimally controlled process continues to {evolve} until hitting the lower boundary $\hat{\gamma}$ where all the remaining control is spent to fill the inventory (the upper boundary is then formally infinite; for details see Sections \ref{construction} and \ref{verification}).

The present solution does in part display a differential connection between SSC and OS. In particular, when the initial inventory level $c$ is strictly larger than $\hat{c}$ there is a single lower boundary $\hat{\gamma}$ which is constant.
Moreover $U_c$ coincides with the value function of an associated optimal stopping problem on $\RR\times(\hat{c},1]$ and the so-called {\em smooth fit} condition holds at $\hat{\gamma}$ (for $c>\hat{c}$) in the sense that $U_{xc}$ is continuous across it. This constant boundary can therefore be considered {\em discontinuously} reflecting. That is, it may be viewed as a limiting case of the more canonical strictly decreasing reflecting boundaries. On the other hand, when the initial inventory level $c$ is smaller than the critical value $\hat{c}$ the control problem is more challenging due to the presence of two moving boundaries, which we identify in Section \ref{sec:auxiliary} with the optimal boundaries of a {\em family} of auxiliary OS problems. In this case it can easily be verified that $U_{xc}$ is discontinuous across the optimal boundaries so that the smooth fit condition breaks down, and there is no differential connection to OS.

Smooth fit is one of the most studied features of OS and SSC theory and it is \emph{per se} interesting to understand why it breaks down. It is known for example (see \cite{Pe07}) that diffusions whose scale function is not continuously differentiable may induce a lack of smooth fit in OS problems with arbitrarily regular objective functionals. On the other hand when the scale function is $C^1$ Guo and Tomecek \cite{GuoTom09} provide necessary and sufficient conditions for the existence of smooth fit in two-dimensional degenerate SSC problems. In particular \cite{GuoTom09} looks at bounded variation control problems of \emph{maximisation} for objective functionals which are \emph{concave} with respect to the control variable and one of their results states that the smooth fit certainly holds if the running profit (i.e.~their counterpart of our function $x\Phi(c)$) lies in $C^2$. It is therefore interesting to observe that in the present paper we indeed have a smooth running cost and the break down of smooth fit is a consequence exclusively of the lack of convexity (in $\nu$) of the cost functional. To the best of our knowledge, this phenomenon is a novelty in the literature.

The rest of the paper is organised as follows. In Section \ref{Setting} we set up the problem and make some standing assumptions. In Section \ref{sec:heuristic} we provide a heuristic study of the action region and of the optimal control, and then we state the main results of the paper (see Theorems \ref{thm:main-nu} and \ref{thm:main-U}) which provide a full solution to \eqref{problem-intro}. Section \ref{construction} is devoted to proving all the technical steps needed to obtain the main result and it follows a constructive approach validated at the end by a verification argument. Finally, proofs of some results needed in Section \ref{construction} are collected in Appendix \ref{someproofs}.


\section{Setting and assumptions}
\label{Setting}

Let $(\Omega,\cF,\PP)$ be a complete probability space carrying a one-dimensional standard Brownian motion $(B_t)_{t\ge0}$ adapted to its natural filtration augmented by $\PP$-null sets $\mathbb{F}:=(\cF_t)_{t\ge0}$. We denote by $X^x$ the Brownian motion starting from $x \in \RR$ at time zero,
\beq
\label{statevariable}
X^x_t= x + B_t, \quad t\geq 0.
\eeq
It is well known that $X^x$ is a {(null)} recurrent process with infinitesimal generator $\mathbb{L}_X:= \frac{1}{2}\frac{d^2}{dx^2}$ and with fundamental decreasing and increasing solutions of the characteristic equation $(\mathbb{L}_X - \lambda)u =0$ given by $\phi_{\lambda}(x):=e^{-\sqrt{2\lambda}x}$ and $\psi_{\lambda}(x):=e^{\sqrt{2\lambda}x}$, respectively.

Letting $c \in [0,1]$ be constant, we denote by $C^{c,\nu}$ the purely controlled process evolving according to
\beq
\label{ControlledY}
C^{c,\nu}_t= c + \nu_t, \quad t \geq 0,
\eeq
where $\nu$ is a control process belonging to the set
\begin{eqnarray}
\label{admissiblecontrols}
\mathcal{A}_c \hspace{-0.2cm}&:=&\hspace{-0.2cm} \{\nu:\Omega \times \mathbb{R}_{+} \mapsto  \mathbb{R}_{+}, ({\nu_{t}(\omega) := \nu(\omega,t)})_{t \geq 0}\mbox{ is nondecreasing,\,\,left-continuous, adapted} \nonumber \\
&& \hspace{5cm} \mbox{ with} \,\,c + \nu_t\leq 1\,\,\,\forall \; t \geq 0, \nonumber \,\,\nu_0=0\,\,\,\,\PP-\mbox{a.s.}\}. \nonumber
\end{eqnarray}
From now on controls belonging to $\mathcal{A}_c$ will be called {{\em admissible}}.

Given a positive discount factor $\lambda$ and a
running cost function $\Phi$, {our problem \eqref{problem-intro}} is to find
\begin{align}
\label{valuefunction}
U(x,c):=\inf_{\nu \in \mathcal{A}_c} \mathcal{J}_{x,c}(\nu),
\end{align}
with
\begin{equation}
\label{nonconvex}
\mathcal{J}_{x,c}(\nu):=\EE\bigg[\int_0^{\infty}e^{-\lambda s}\lambda X^x_{s}\Phi(
C^{c,\nu}_{s})ds + \int^{\infty}_0 {e^{-\lambda s}X^x_s\,d{\nu}_s} \bigg],
\end{equation}
and to determine a minimising control policy $\nu^*$ if one exists.
A priori existence results for the optimal solutions of SSC problems with cost criteria which are not necessarily convex are rare in the literature. Two papers dealing with questions of such existence in abstract form are \cite{DufourMiller} and \cite{HaussmanSuo}. Here we do not provide any abstract existence result for the optimal policy of problem \eqref{valuefunction}, but we explicitly construct it in Section \ref{construction} below.

Throughout this paper, for $t \geq 0$ and $\nu \in \mathcal{A}_c$ we will make use of the notation $\int_0^{t} e^{-\lambda s}X^x_s d\nu_s$ to indicate the Stieltjes integral $\int_{[0,t)} e^{-\lambda s} X^x_s d\nu_s$ with respect to $\nu$. Moreover, from now on the following standing assumption on the running cost factor $\Phi$ will hold.
\begin{ass}
\label{ass-Phi}
$\Phi: \mathbb{R} \mapsto \mathbb{R}_+$ lies in $C^2(\mathbb{R})$ and is decreasing and strictly convex with $\Phi(1)=0$.
\end{ass}

We will observe in Section \ref{sec:heuristic} below that the sign of 
\beq
\label{def-k}
k(c):=\lambda+\lambda \,\Phi'(c)
\eeq
plays a crucial role. We now define also the function
\begin{align}
\label{def:R}
R(c):=1-c-\Phi(c),\qquad c\in[0,1],
\end{align}
and assume the existence of constants $\hat c$ and $c_o$, both lying in $(0,1)$, such that
\begin{eqnarray}
\label{def-co}
R(c_o)&=&0, \text{ and } \\
\label{def-chat}
R'(\hat{c})&=&0 \text{ (or equivalently, }  k(\hat{c})=0).
\end{eqnarray}
It follows from the strict convexity of $\Phi$ that the function $c \mapsto k(c)$ is strictly increasing and that $\hat c$, $c_o$ are uniquely defined. The assumption that $c_o$ lies in $(0,1)$ allows us to consider the most general setting but the case where $c_o$ does not exist in $(0,1)$ is also covered by the method presented in the next sections. The next result easily follows from properties of $\Phi$.
\begin{lemma}
\label{lem:R}
$R(1)=0$ and $R$ is strictly concave, hence it is negative on $[0,c_o)$ and positive on $(c_o,1)$; also, $R$ has a positive maximum at $\hat{c}$ and therefore $c_o < \hat{c}$.
\end{lemma}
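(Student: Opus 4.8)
The plan is to derive everything from the elementary properties of $\Phi$ in Assumption \ref{ass-Phi} together with the defining relations \eqref{def-co}--\eqref{def-chat}, the key being two differential identities. Differentiating \eqref{def:R} twice gives $R''(c)=-\Phi''(c)<0$ for all $c$ by strict convexity of $\Phi$, so $R$ is strictly concave on $[0,1]$; and $R'(c)=-1-\Phi'(c)=-\tfrac{1}{\lambda}k(c)$ by \eqref{def-k}, so $R'$ has the sign opposite to that of $k$.

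First I would locate the maximum of $R$. Since $c\mapsto k(c)$ is strictly increasing with $k(\hat c)=0$, the identity $R'=-k/\lambda$ shows $R'>0$ on $[0,\hat c)$ and $R'<0$ on $(\hat c,1]$; hence $R$ is strictly increasing on $[0,\hat c]$, strictly decreasing on $[\hat c,1]$, and $\hat c$ is its unique maximiser on $[0,1]$. A direct computation using $\Phi(1)=0$ gives $R(1)=1-1-\Phi(1)=0$, and since $R$ strictly decreases on $[\hat c,1]$ with $\hat c<1$ this forces $R(\hat c)>R(1)=0$, i.e.\ the maximum is strictly positive.

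Next I would pin down the sign of $R$ from strict concavity and its two zeros $c_o$ and $1$ (with $c_o<1$ since $c_o\in(0,1)$). For $c\in(c_o,1)$, writing $c=\theta c_o+(1-\theta)\cdot 1$ with $\theta\in(0,1)$ and using strict concavity gives $R(c)>\theta R(c_o)+(1-\theta)R(1)=0$; for $c\in[0,c_o)$, writing instead $c_o=\mu c+(1-\mu)\cdot 1$ with $\mu\in(0,1)$ gives $0=R(c_o)>\mu R(c)+(1-\mu)R(1)=\mu R(c)$, hence $R(c)<0$. Thus $R<0$ on $[0,c_o)$ and $R>0$ on $(c_o,1)$. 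Finally, since $R(\hat c)>0$ while $R\le 0$ on $[0,c_o]$, we must have $\hat c>c_o$, which completes the proof.

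There is no real obstacle here: the statement is bookkeeping once one notices the identification $R'=-k/\lambda$, which makes the location and the positivity of the maximum immediate; the rest is just the standard geometry of a strictly concave function with two prescribed zeros.
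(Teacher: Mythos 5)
Your proof is correct and is exactly the elementary argument the paper has in mind (the paper omits the proof, remarking only that it ``easily follows from properties of $\Phi$''). The key identification $R'=-k/\lambda$, the evaluation $R(1)=-\Phi(1)=0$, the strict concavity from $R''=-\Phi''<0$, and the sign pattern from the two zeros $c_o<1$ are all handled correctly, and the conclusion $c_o<\hat c$ follows as you say.
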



\section{Preliminary discussion and main results}
\label{sec:heuristic}

In order to derive a candidate solution to problem \eqref{valuefunction} we perform a preliminary heuristic analysis, distinguishing three cases according to the signs of $k(c)$ and $x$.

(A). When $k(c)>0$ (i.e.~when $c\in(\hat{c},1]$) consider the costs of the following three strategies exerting respectively: no control, a small amount of control, or a large amount. Firstly if control is never exercised, i.e.~$\nu_t\equiv0$, $t\ge0$, one obtains from \eqref{nonconvex} an overall cost $\mathcal{J}_{x,c}(0)=x\Phi(c)$ by an application of Fubini's theorem. If instead at time zero one increases the inventory by a small amount $\delta>0$ and then does nothing for the remaining time, i.e.~$\nu_t=\nu^\delta_t:=\delta$ for $t>0$ in \eqref{nonconvex}, the total cost is $\mathcal{J}_{x,c}(\nu^\delta)=x(\delta+\Phi(c+\delta))$. {Writing} $\Phi(c+\delta)=\Phi(c)+\Phi'(c)\delta+o(\delta^2)$ we find that $\mathcal{J}_{x,c}(\nu^\delta)=\mathcal{J}_{x,c}(0)+\delta x(1+\Phi'(c))+o(\delta^2)$ so that exercising a small amount of control reduces future {expected} costs relative to a complete inaction strategy only if $x k(c)/\lambda<0$, i.e.~$x<0$, since $k(c)>0$. It is then natural to expect that for each $c\in(\hat{c},1]$ there should exist $\gamma(c)<0$ such that it is optimal to exercise control only when $X^x_s \leq\gamma(c)$. 

We next want to understand whether a small control increment is more favourable than a large one and for this we consider a strategy where at time zero one exercises all available control, i.e.~$\nu_t=\nu^f_t:=1-c$ for $t>0$. The latter produces a total expected cost equal to $\mathcal{J}_{x,c}(\nu^f)=x(1-c)$, so that for $x<0$ and recalling that $k$ is increasing one has
\begin{align}\label{pre00}
\mathcal{J}_{x,c}(\nu^f)-\mathcal{J}_{x,c}(\nu^\delta)=\frac{x}{\lambda}\Big(\int_c^1k(y)dy - \delta k(c)\Big)+o(\delta^2)\le \frac{x}{\lambda}k(c)(1-c-\delta).
\end{align}
Since $k(c)>0$ the last expression is negative whenever $1-c>\delta$, so it is reasonable to expect that large control increments are more profitable than small ones. This suggests that the threshold $\gamma$ introduced above should not be of the reflecting type (see for instance \cite{MehriZervos}) but rather of repelling type as observed in \cite{DZ} and \cite{KOWZ} among others. Using this heuristic a corresponding free boundary problem is formulated and solved in Section \ref{sec:step1}.

(B1). When $k(c)<0$ (that is, when $c\in[0,\hat{c})$) we again compare inaction to small and large control increments. Observe that now $\nu^\delta$ is {favourable} (with respect to complete inaction) if and only if $x k(c)/\lambda<0$, i.e.~$x>0$, since now $k(c)<0$. Hence we expect that for fixed $c\in[0,\hat{c})$ one should act when the process $X$ exceeds a positive upper threshold $\beta(c)$. Then compare a small control increment with a large one, in particular consider a policy $\nu^{\hat{c}}$ that immediately exercises an amount $\hat{c}-c$ of control and then acts optimally for problem \eqref{valuefunction} with initial conditions $(x,\hat{c})$. The expected cost associated to $\nu^{\hat{c}}$ is $\mathcal{J}_{x,c}(\nu^{\hat{c}})=x(\hat{c}-c)+U(x,\hat{c})$ and one has
\begin{align}\label{pre01}
\mathcal{J}_{x,c}(\nu^{\hat{c}})-\mathcal{J}_{x,c}(\nu^\delta)\le\frac{x}{\lambda}\Big(\int_c^{\hat{c}}k(y)dy - \delta k(c)\Big)+o(\delta^2)
\end{align}
where we have used that $U(x,\hat{c})\le x\Phi(\hat{c})$. If we fix $c\in[0,\hat{c})$ and $x>0$, then for $\delta>0$ sufficiently small the right-hand side of \eqref{pre01} becomes negative, which suggests that a reflection strategy at the upper boundary $\beta$ would be less {favourable}
than the strategy described by $\nu^{\hat{c}}$.

(B2). Finally, when $x<0$ and $k(c)<0$ we compare the `large' increment to inaction. Note that $U(x,\hat{c})\le x(1-\hat{c})$ to obtain
\begin{align}\label{pre02}
\mathcal{J}_{x,c}(\nu^{\hat{c}})-\mathcal{J}_{x,c}(0)\le\frac{x}{\lambda}\int_c^{1}k(y)dy=\frac{x}{\lambda}\Big(\int_c^{\hat{c}}k(y)dy
+\int_{\hat{c}}^1k(y)dy\Big).
\end{align}
The first integral on the right-hand side of \eqref{pre02} is negative but its absolute value can be made arbitrarily small by taking $c$ close to $\hat{c}$. The second integral is positive and {independent} of $c$. Thus the overall expression becomes negative when $c$ approaches $\hat{c}$ from the left. This suggests that when the inventory is {a little below} the critical value $\hat{c}$ an investment sufficient to increase the inventory to the level $\hat{c}$ is preferable to inaction, {after which the optimisation continues as discussed above for $c \in (\hat c, 1]$. We therefore explore the presence of both upper and lower repelling boundaries when $c$ is a little below $\hat{c}$, and this is done in Section \ref{sec:step2}. The candidate solution for smaller values of $c$ (when the heuristic suggests only an upper boundary) is constructed in Section \ref{clessco}.
\vspace{+7pt}

In each of the previous heuristics it is preferable to exert a large amount of control. This suggests suitable connections to optimal stopping problems (although not necessarily of differential type) and the main novelty in this paper is to exploit these expected connections. In particular we take advantage of the opportunity to solve optimal stopping problems using geometric arguments as in \cite{DayKar}. This allows candidates for the control boundaries, value function and optimal control policy to be constructed and analytical properties to be derived. Further these optimal stopping problems have similar variational inequalities to the control problem, which facilitates verification of the candidate solution.

Before proceeding with the formal analysis we present the solution to problem \eqref{valuefunction}, which is our main result. As suggested by the above heuristics the solution is somewhat complex, but a straightforward graphical presentation is given in Figure \ref{fig:1}. The formal solution is given in the next three results.

\begin{theorem}
\label{thm:main-nu}
Recall $\hat{c}$ and $c_o$ as in \eqref{def-chat} and \eqref{def-co}, respectively. There exists two functions $\hat{\beta},\,\hat{\gamma}$ defined on $[0,1]$ and taking values in the extended real line $\RR\cup\{\pm\infty\}$ fulfilling 
\begin{itemize}
\item[ i)] In $[0,\hat{c})$, $\hat{\beta}\in(0,1/\sqrt{2\lambda})$, it is $C^1$ and decreasing, whereas for $c\in[\hat{c},1]$, $\hat{\beta}(c)=+\infty$; 
\item[ii)] In $(c_o,1]$, $\hat{\gamma}\le -1/\sqrt{2\lambda}$, it is $C^1$ and non decreasing, whereas for $c\in[0,c_o]$, $\hat{\gamma}(c)=-\infty$;
\end{itemize}
and such that an optimal control $\nu^*$ can be constructed as follows: 
for $(x,c)\in\RR\times(0,1)$ define the stopping times  
\begin{equation}
\label{taubetataugamma}
\tau_{\hat{\beta}}:=\inf\{t \geq 0: X^x_t \geq \hat{\beta}(c)\}, \qquad \tau_{\hat{\gamma}}:=\inf\{t \geq 0: X^x_t \leq \hat{\gamma}(c)\},
\end{equation}
and
\begin{equation}
\label{tausigmastar}
\tau^*:=\tau_{\hat{\beta}} \wedge \tau_{\hat{\gamma}}, \qquad \sigma^*:=\inf\{t \geq \tau_{\hat{\beta}}: X^x_t \leq \hat{\gamma}(\hat{c})\},
\end{equation}
with the convention $\inf\emptyset=+\infty$ (note that $\tau_{\hat{\beta}}=+\infty=\sigma^*$, $\PP$-a.s.~if $c\ge \hat{c}$); then the admissible purely discontinuous control
\begin{equation}
\label{op-contr01}
\nu^*_{t}:= (1-c)\mathds{1}_{\{t\,>\,\tau^*\}}\mathds{1}_{\{\tau^*\,=\,\tau_{\hat{\gamma}}\}} + \big[(\hat{c}-c)\mathds{1}_{\{t\, \leq\, \sigma^*\}} + (1-\hat{c})\mathds{1}_{\{t\, >  \,\sigma^*\}}\big]\mathds{1}_{\{t\,>\,\tau^*\}}\mathds{1}_{\{\tau^*\,=\,\tau_{\hat{\beta}}\}}
\end{equation}
is optimal for \eqref{valuefunction}.
\end{theorem}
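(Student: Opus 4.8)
The statement collects the output of the constructive analysis developed in the remaining sections, so the argument naturally splits into three blocks: (a) the identification of a parametrised family of optimal stopping (OS) problems associated with the formal derivative $\partial_c U$; (b) the explicit solution of these problems by geometric (concavity) arguments, from which the regularity and monotonicity properties in items i)--ii) are read off, together with the location of the discontinuity of $\hat\beta$ at $\hat c$ and of the asymptote of $\hat\gamma$ at $c_o$; and (c) a verification argument showing that the purely discontinuous control $\nu^*$ of \eqref{op-contr01} is admissible and optimal for \eqref{valuefunction}. I would carry these out in this order.

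For block (a) the plan is to work formally with $w(x,c):=\partial_c U(x,c)$. Differentiating the candidate Hamilton--Jacobi--Bellman variational inequality in $c$ and exploiting the d.c.\ structure of $\mathcal{J}_{x,c}$ (write $X^x=(X^x)^+-(X^x)^-$), one is led to expect that, for each fixed $c$, $w(\cdot,c)$ is the value of an OS problem for the $\lambda$-killed Brownian motion $X^x$ with an obstacle assembled from $x\Phi'(c)$ and $x$; by the heuristics (A), (B1), (B2) the sign of $k(c)$ from \eqref{def-k} dictates whether the obstacle is increasing or decreasing in $x$, hence whether the continuation region of the OS problem is a half-line $(\gamma(c),\infty)$, a half-line $(-\infty,\beta(c))$, or a bounded interval $(\gamma(c),\beta(c))$. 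This is the content of Section \ref{sec:auxiliary}: one should \emph{define directly} the family $\{v(\cdot\,;c)\}_c$ of OS problems, solve them, set $\hat\beta(c),\hat\gamma(c)$ to be their optimal boundaries, and then reconstruct the candidate value function by $U(x,c)=x\Phi(c)$ on the inaction region and by integrating $v$ in the $c$-direction elsewhere, checking a posteriori that the resulting $U$ solves the HJB equation.

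For block (b) the key tool is the characterisation of $\lambda$-excessive functions of Brownian motion via concavity in the coordinate $F_\lambda:=\psi_\lambda/\phi_\lambda$ (equivalently $x\mapsto e^{2\sqrt{2\lambda}x}$), following \cite{Dynkin} and \cite{DayKar}: in those coordinates each $v(\cdot\,;c)$ is the smallest concave majorant of the transformed obstacle, the stopping set is the contact set, and the explicit form of $\phi_\lambda,\psi_\lambda$ makes everything computable. From this I expect to extract: the bounds $\hat\beta(c)\in(0,1/\sqrt{2\lambda})$ and $\hat\gamma(c)\le-1/\sqrt{2\lambda}$ (the constants $\pm1/\sqrt{2\lambda}$ being the stationary points of the relevant obstacles, e.g.\ $x\mapsto x\,\phi_\lambda(x)$ is maximised at $x=1/\sqrt{2\lambda}$); the monotonicity of $c\mapsto\hat\beta(c)$ and $c\mapsto\hat\gamma(c)$, obtained by a comparison/sandwich argument for concave envelopes as the transformed obstacles move monotonically with $c$; and the $C^1$-regularity, obtained from smooth fit of the OS problems (the majorant touching the obstacle tangentially) together with the implicit function theorem applied to the smooth-fit equations, whose coefficients depend smoothly on $c$ through $\Phi',\Phi''$. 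The degeneration of the obstacles as $c\uparrow\hat c$ and $c\downarrow c_o$, controlled via Lemma \ref{lem:R} and \eqref{def-chat}--\eqref{def-co}, pins down the single jump of $\hat\beta$ at $\hat c$ (where $\hat\beta\equiv+\infty$ beyond) and the vertical asymptote of $\hat\gamma$ at $c_o$.

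For block (c) I would run a careful but standard verification. By construction the candidate $U$ is $C^1$ in $x$ with $\partial_c U$ continuous and satisfies $\mathbb{L}_X U-\lambda U+\lambda x\Phi(c)\le 0$, $\partial_c U+x\ge0$, with complementarity. For arbitrary $\nu\in\cA_c$ I apply the Itô--Meyer formula to $e^{-\lambda t}U(X^x_t,C^{c,\nu}_t)$, splitting $\nu$ into its continuous and jump parts; the variational inequality makes the drift terms nonpositive and the gradient-constraint term dominates the $d\nu$-integral, yielding $U(x,c)\le\mathcal{J}_{x,c}(\nu)$ after taking expectations and letting $t\to\infty$ (integrability coming from the linear growth of $U$ in $x$ and the discount rate $\lambda$). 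One then checks that $\nu^*$ from \eqref{op-contr01} is admissible and that, along its trajectory, $(X^x_t,C^{c,\nu^*}_t)$ remains in the closure of the inaction region and all the above inequalities hold with equality, so $U(x,c)=\mathcal{J}_{x,c}(\nu^*)$; in particular the jumps of $\nu^*$ occur precisely when $X^x$ hits $\hat\beta$ or $\hat\gamma$, with the prescribed sizes $\hat c-c$ then $1-\hat c$ (on $\tau_{\hat\beta}$, then $\sigma^*$) or $1-c$ (on $\tau_{\hat\gamma}$), consistently with the repelling nature of the boundaries described in \cite{DZ}, \cite{KOWZ}.

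\textbf{Main obstacle.} The hard part is twofold. First, because convexity of the cost in $\nu$ fails there is no a priori reflecting picture: one must show \emph{directly} that the economically natural candidate — the purely discontinuous control that pushes the inventory to the critical level $\hat c$ at the upper boundary (and only later to $1$ at the lower boundary) rather than all the way to $1$, or only infinitesimally — is genuinely optimal, which forces a precise matching between the control and the fine structure of the OS family (two moving boundaries for $c<\hat c$ collapsing to a single constant boundary for $c>\hat c$, with smooth fit holding in the latter case and breaking in the former). Second, establishing the claimed $C^1$-regularity and monotonicity of $\hat\beta,\hat\gamma$ together with the exact global picture (a single jump of $\hat\beta$ at $\hat c$, an asymptote of $\hat\gamma$ at $c_o$) requires a genuinely quantitative study of the concave-majorant problems and of the smooth-fit equations, which is the technical core of Section \ref{construction} and Appendix \ref{someproofs}.
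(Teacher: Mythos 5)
Your blocks (b) and (c) --- the Dayanik--Karatzas concave-envelope machinery in the coordinate $F_\lambda$ and the It\^o-type verification with jump terms handled separately --- are in line with what the paper does (Sections \ref{sec:auxiliary}--\ref{verification}). The genuine gap is in block (a), the identification of the auxiliary optimal stopping family. You propose to differentiate the HJB in $c$ and realise $w=\partial_c U$ as a family of OS values, i.e.\ the classical differential connection of \cite{KaratzasShreve84}, and then recover $U$ by integrating in the $c$-direction. But one of the paper's main findings is that this connection \emph{fails} precisely in the difficult regime $c<\hat c$: there $U_{xc}$ is discontinuous across both boundaries, so $\partial_c U(\cdot,c)$ cannot coincide with the value of a well-behaved (smooth-fit) stopping problem, and a construction that imposes tangency of a concave majorant for the $c$-derivative would produce the wrong boundaries. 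What the paper does instead is exploit the repelling, large-jump structure suggested by heuristics (B1)--(B2): for $c<\hat c$ it conjectures that $U(x,c)$ \emph{itself} equals the stopping value \eqref{Wstar}, whose payoff $e^{-\lambda\tau}\big[X^x_\tau(\hat c-c)+W^o(X^x_\tau,\hat c)\big]$ encodes the cost of jumping the inventory to $\hat c$ plus the continuation value $W^o$ already computed in Step 1; removing the Dynkin term yields the OS problem \eqref{def-V} with obstacle $G$ of \eqref{def-G}, and it is \emph{this} family that is solved geometrically (here smooth fit for the auxiliary problem is consistent with the failure of $U_{xc}$-continuity, because the obstacle $G$ already contains $W^o(\cdot,\hat c)$ rather than a $c$-derivative). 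Relatedly, your claim that the sign of $k(c)$ alone decides whether the continuation set is a half-line or a bounded interval conflates two thresholds: the split between one and two boundaries within $\{k<0\}$ occurs at $c_o$ and is governed by the sign of $R(c)$ from \eqref{def:R}, not by $k$.

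A secondary point: as written the proposal is a plan rather than a proof. None of the quantitative assertions in items i)--ii) --- the bounds $\hat\beta\in(0,1/\sqrt{2\lambda})$ and $\hat\gamma\le-1/\sqrt{2\lambda}$, the $C^1$ regularity and monotonicity, the single jump of $\hat\beta$ at $\hat c$ and the asymptote of $\hat\gamma$ at $c_o$ --- is actually derived; in the paper these come from the explicit tangency systems \eqref{F1}--\eqref{F3} and the implicit-function-theorem computations of Propositions \ref{monboundaries} and \ref{existencey2star} (proved in Appendix \ref{someproofs}), summarised in Corollary \ref{cor:regBdr}. Likewise the equality $U=\mathcal{J}_{x,c}(\nu^*)$ in the verification requires the specific identities \eqref{observation2}--\eqref{jump1}, which track exactly one jump of size $\hat c-c$ at $\tau_{\hat\beta}$ followed by the residual jump at $\tau_{\hat\gamma}=\sigma^*$; your sketch gestures at this but does not carry it out.
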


\begin{prop}
\label{thm:main-bd}
The optimal boundaries $\hat{\beta}$ and $\hat{\gamma}$ of Theorem \ref{thm:main-nu} are characterised as follows:
\begin{itemize}
\item[i)] For $c\in[\hat{c},1]$ one has $\hat{\gamma}(c)=-1/\sqrt{2\lambda}$ (and $\hat{\beta}(c)=+\infty$); 
\item[ ii)] For $c\in(c_o,\hat{c})$ one has $\hat{\gamma}(c)=\tfrac{1}{2\sqrt{2\lambda}}\ln(\hat{y}_1(c))$ and $\hat{\beta}(c)=\tfrac{1}{2\sqrt{2\lambda}}\ln(\hat{y}_2(c))$ where $\hat{y}_1$ and $\hat{y}_2$ are the unique couple solving the following problem:
\begin{align}
&\text{Find $y_1\in(0,e^{-2})$ and $y_2\in(1,e^{2})$ such that $F_1(y_1,y_2;c)=0$ and $F_2(y_1,y_2;c)=0$ with}\nonumber\\[+3pt]
\label{F1} &F_1(x,y;c) :=x^{-\frac{1}{2}}(1 + \tfrac{1}{2}\ln x)R(c) - y^{-\frac{1}{2}}(1 + \tfrac{1}{2}\ln y)(R(c)-R(\hat{c})), \\[+3pt]
\label{F2} &F_2(x,y;c) :=x^{\frac{1}{2}}(1 - \tfrac{1}{2}\ln x)R(c) - y^{\frac{1}{2}}(1 - \tfrac{1}{2}\ln y)(R(c)-R(\hat{c})) - 2e^{-1}R(\hat{c});
\end{align}
\item[ iii)] For $c\in[0,c_o]$ one has $\hat{\beta}(c)=\tfrac{1}{2\sqrt{2\lambda}}\ln(\hat{y}_2(c))$ (and $\hat{\gamma}(c)=-\infty$) where $\hat{y}_2$ is the unique solution in $(1,e^{2})$ of $F_3(y;c)=0$ with
\begin{equation}
\label{F3}
F_3(y;c):=y^{\frac{1}{2}}\big(1 - \tfrac{1}{2}\ln y\big) - \frac{2e^{-1}R(\hat{c})}{R(\hat{c}) -R(c)}.
\end{equation} 
\end{itemize} 
\end{prop}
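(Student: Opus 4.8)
The plan is to extract the formulae from the constructive solution of \eqref{valuefunction}, using that the candidate optimal control of Theorem~\ref{thm:main-nu} is purely discontinuous and only ever moves the inventory to $\hat c$ or to $1$. Since $U(\cdot,1)\equiv0$, the shifted value $G(x,c):=U(x,c)-x\Phi(c)$ is $\lambda$-harmonic on each inaction region, hence of the form $A\phi_\lambda(x)+B\psi_\lambda(x)$ there with $\phi_\lambda(x)=e^{-\sqrt{2\lambda}x}$ and $\psi_\lambda(x)=e^{\sqrt{2\lambda}x}$, and it solves an optimal stopping problem whose obstacle is the pointwise minimum of the two ``act now'' payoffs $x\,R(c)$ (jump to $1$) and $x\,(R(c)-R(\hat c))+G(x,\hat c)$ (jump to $\hat c$ and continue optimally). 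The first step is case (i): for $c\in[\hat c,1]$ only the first option is relevant and, because $G$ must vanish as $x\to+\infty$, its $\psi_\lambda$-component is absent; imposing value matching and smooth fit of $A\phi_\lambda$ against $x\,R(c)$ at the single boundary $\hat\gamma(c)$ gives $\phi_\lambda(\hat\gamma)/\phi_\lambda'(\hat\gamma)=\hat\gamma$, i.e.\ $\hat\gamma(c)\equiv-1/\sqrt{2\lambda}$, and simultaneously identifies $G(x,\hat c)=-\,e^{-1}R(\hat c)(2\lambda)^{-1/2}\phi_\lambda(x)$ for $x\ge-1/\sqrt{2\lambda}$, which feeds the remaining cases.

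Next I would pass to the coordinate $y=\psi_\lambda(x)/\phi_\lambda(x)=e^{2\sqrt{2\lambda}x}$ --- exactly the change of variables implicit in $\hat\gamma(c)=\tfrac1{2\sqrt{2\lambda}}\ln\hat y_1(c)$ and $\hat\beta(c)=\tfrac1{2\sqrt{2\lambda}}\ln\hat y_2(c)$ --- and invoke the concavity characterisation of excessive functions for Brownian motion (\cite{Dynkin,DayKar}): $G/\phi_\lambda$, viewed as a function of $y$, is the largest convex minorant of the transformed obstacle $H$. A direct computation turns the linear-in-$x$ payoffs into multiples of $\ell(y):=y^{1/2}\ln y$, so that for $c\in(c_o,\hat c)$ one finds $H(y)=\tfrac{R(c)}{2\sqrt{2\lambda}}\ell(y)$ on $(0,e^{-2}]$ and $H(y)=\tfrac{R(c)-R(\hat c)}{2\sqrt{2\lambda}}\ell(y)-\tfrac{e^{-1}R(\hat c)}{\sqrt{2\lambda}}$ on $[e^{-2},\infty)$, the two branches glueing in $C^1$-fashion at the unique critical point $y=e^{-2}$ of $\ell$. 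Since $\ell''(y)=-\tfrac14 y^{-3/2}\ln y$ (so $\ell$ is convex on $(0,1)$ and concave on $(1,\infty)$) while $R(c)>0>R(c)-R(\hat c)$ on $(c_o,\hat c)$ by Lemma~\ref{lem:R}, $H$ is convex--concave--convex; its largest convex minorant therefore coincides with $H$ off a single interval $[\hat y_1(c),\hat y_2(c)]$ and is the common tangent to the two branches on it. Writing that this line is tangent at $\hat y_1\in(0,e^{-2})$ and at $\hat y_2\in(1,\infty)$: equality of the slopes is, using $\ell'(y)=y^{-1/2}(1+\tfrac12\ln y)$, exactly $F_1(\hat y_1,\hat y_2;c)=0$ in \eqref{F1}; equality of the intercepts is, using $y\ell'(y)-\ell(y)=y^{1/2}(1-\tfrac12\ln y)$, exactly $F_2(\hat y_1,\hat y_2;c)=0$ in \eqref{F2}. (Equivalently one may solve the four value matching/smooth fit equations for $A,B,\hat\gamma,\hat\beta$ and eliminate $A,B$ using $\psi_\lambda'\phi_\lambda-\psi_\lambda\phi_\lambda'=2\sqrt{2\lambda}$.) For $c\in[0,c_o]$ the ``jump to $1$'' branch is never used, $G=D\psi_\lambda$ on $(-\infty,\hat\beta(c))$ (it must vanish as $x\to-\infty$), and eliminating $D$ from value matching and smooth fit at $\hat\beta(c)$ gives $F_3(\hat y_2;c)=0$ in \eqref{F3}.

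The remaining --- and hardest --- point is existence and uniqueness of the solutions in the stated ranges. Case (iii) is elementary: $g_3(y):=y^{1/2}(1-\tfrac12\ln y)$ has $g_3'(y)=-\tfrac14 y^{-1/2}\ln y$, so it decreases strictly from $g_3(1)=1$ to $g_3(e^2)=0$, while $2e^{-1}R(\hat c)/(R(\hat c)-R(c))\in(0,1)$ for all $c\in[0,c_o]$ because $R(c)\le0<(1-2e^{-1})R(\hat c)$ by Lemma~\ref{lem:R}; hence a unique root $\hat y_2(c)\in(1,e^2)$. For case (ii) I expect the genuine work: from $F_1=0$ one has $\ell'(\hat y_2)=\tfrac{R(c)}{R(c)-R(\hat c)}\ell'(\hat y_1)$, and since $\ell'$ is a strictly decreasing bijection of $(1,\infty)$ onto $(0,1)$ this expresses $\hat y_2$ as a strictly monotone function of $\hat y_1\in(0,e^{-2})$; substituting into $F_2=0$ reduces the system to one scalar equation in $\hat y_1$, for which I would establish strict monotonicity and a sign change on $(0,e^{-2})$ from the behaviour of $\ell$ near $0$ together with $\ell(e^{-2})=-2e^{-1}$. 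The range restrictions $\hat y_1<e^{-2}$ and $1<\hat y_2<e^2$ come out of the same estimates, since $m(y):=y^{1/2}(1-\tfrac12\ln y)$ attains on $(0,e^{-2}]$ its maximum $2e^{-1}$ at $e^{-2}$ and vanishes at $e^2$, which combined with $R(c)<R(\hat c)$ excludes the endpoints; and letting $R(c)-R(\hat c)\to0$ (resp.\ $R(c)\to0$) in $F_1,F_2$ yields $\hat y_1(c)\to e^{-2}$ as $c\uparrow\hat c$ (resp.\ $\hat y_1(c)\to0$ as $c\downarrow c_o$), recovering part (i) (resp.\ part (iii)) and the continuity of the construction across the regime boundaries. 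The monotonicity, $C^1$ regularity and the asymptote of $\hat\beta,\hat\gamma$ on $[0,1]$ are those already asserted in Theorem~\ref{thm:main-nu}; the sole task of this Proposition is to identify the equations above and solve them.
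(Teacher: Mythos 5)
Your derivation of the equations is essentially the paper's own route: case (i) is obtained exactly as in Proposition \ref{prop:solWo} (value matching and smooth fit against $xR(c)$, killing the $\psi_\lambda$-component by sublinear growth), and for $c<\hat c$ you pass, as the paper does in Section \ref{sec:auxiliary}, to the Dayanik--Karatzas coordinate $y=F_\lambda(x)$, compute the two branches of $H$ (your formulas agree with \eqref{def-H2}), read off the convex--concave--convex structure from the sign of $R(c)$ and $R(c)-R(\hat c)$ (Lemmas \ref{lem:H} and \ref{lem:H2}), and translate the bitangency (resp.\ tangency-through-the-origin) conditions into $F_1=F_2=0$ (resp.\ $F_3=0$); these computations are correct and match \eqref{system-y1y2-tris} and \eqref{defy2star-bis}. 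Your treatment of case (iii) is complete: the monotone bijection argument for $y\mapsto y^{1/2}(1-\tfrac12\ln y)$ on $(1,e^2)$ together with the bound $2e^{-1}R(\hat c)/(R(\hat c)-R(c))<1$ is exactly the paper's proof of Proposition \ref{existencey2star}(i).

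The one genuine gap is existence and uniqueness for case (ii), which you yourself flag as ``the hardest point'' but then only sketch. Your plan is to use $F_1=0$ to write $\hat y_2$ as a function of $\hat y_1$ via the decreasing bijection $\ell'\colon(1,\infty)\to(0,1)$ and then find a sign change of a monotone scalar function of $\hat y_1$. As stated this does not go through globally: $F_1=0$ requires $\ell'(\hat y_2)=\tfrac{R(c)}{R(c)-R(\hat c)}\,\ell'(\hat y_1)$, and since $\ell'(\hat y_1)\to-\infty$ as $\hat y_1\downarrow0$ the right-hand side leaves the range $(0,1)$ of $\ell'$ on $(1,\infty)$, so the implicit map $\hat y_1\mapsto\hat y_2$ is only defined on a proper subinterval of $(0,e^{-2})$ whose endpoints depend on $c$; the monotonicity of the reduced scalar equation is also not established. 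The paper avoids this by parametrising instead with the tangency point $y\ge1$ on the outer convex branch: with $r_y$ the tangent line to $H(\cdot,c)$ at $y$, the function $P_r(y,c)=\sup_{z\in[0,1]}\big(r_y(z)-H(z,c)\big)$ is continuous and decreasing in $y$, strictly positive at $y=1$ (by concavity of $H$ on $[e^{-2},1]$) and negative for large $y$ (since the intercept $r_y(0)\to-\infty$), giving a unique $\hat y_2(c)$; uniqueness of the contact point $\hat y_1(c)\in(0,e^{-2})$ then follows from strict convexity of $H$ there (Proposition \ref{existencegeometric}). Either you should switch to such a one-parameter monotone family of tangent lines, or you must carefully delimit the domain of your implicit function and prove the claimed monotonicity and sign change; without one of these the ``unique couple'' assertion of part (ii) is not proved.
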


\begin{theorem}
\label{thm:main-U}
Let $\mathcal{O}:=\RR\times(0,1)$. The function $U$ of \eqref{valuefunction} belongs to $C^1(\mathcal{O})\cap C(\overline{\mathcal{O}})$ with $U_{xx}\in L^{\infty}_{loc}(\mathcal{O})$ and it solves the variational problem
\begin{align}\label{eq:HJB}
\max\big\{(-\tfrac{1}{2}w_{xx}+\lambda w)(x,c)-\lambda x \Phi(c)\,,\,-w_c(x,c)-x\big\}=0,\quad\text{for a.e.~$(x,c)\in\mathcal{O}$}
\end{align}
with $U(x,1)=0$, $x\in\RR$.
\end{theorem}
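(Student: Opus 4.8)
The plan is to establish Theorem~\ref{thm:main-U} by a verification argument built on the candidate value function $w$ and the candidate optimal control $\nu^{*}$ of \eqref{op-contr01}, both of which are constructed explicitly in Section~\ref{construction}. Recall that $w$ is defined piecewise on $\mathcal{O}$: on the \emph{inaction region} $\mathcal{I}$ (the points $(x,c)$ lying strictly between $\hat\gamma(c)$ and $\hat\beta(c)$) it is the solution of $(-\tfrac12 w_{xx}+\lambda w)(x,c)=\lambda x\Phi(c)$ selected by the boundary behaviour at $\hat\beta,\hat\gamma$, while on $\mathcal{O}\setminus\mathcal{I}$ it is given by the affine prescription $w(x,c)=x(\overline{c}-c)+w(x,\overline{c})$ with $\overline c=\hat c$ when $x\ge\hat\beta(c)$ and $\overline c=1$ when $x\le\hat\gamma(c)$, so that $w_c=-x$ there; in particular $w(x,1)=0$. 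This already yields the boundary condition in the statement, since for $c=1$ the only admissible control is $\nu\equiv 0$ and, by Assumption~\ref{ass-Phi}, $\mathcal{J}_{x,1}(0)=\EE[\int_0^\infty e^{-\lambda s}\lambda X^x_s\Phi(1)\,ds]=0$. The first task is then to record from Sections~\ref{sec:step1}--\ref{clessco} that this $w$ enjoys exactly the regularity asserted for $U$, namely $w\in C^{1}(\mathcal{O})\cap C(\overline{\mathcal{O}})$ with $w_{xx}\in L^{\infty}_{\mathrm{loc}}(\mathcal{O})$: the continuous matching of $w_x$ across $\hat\beta$ and $\hat\gamma$ comes from the smooth-fit property of the auxiliary optimal stopping problems of Section~\ref{sec:auxiliary} (legitimate since the scale function of $X$ is $x\mapsto x$, hence $C^1$) together with the relations $F_i=0$ of Proposition~\ref{thm:main-bd}, while the continuous matching of $w_c$ (with value $-x$) across both boundaries is the first-order condition built into the free-boundary problem and holds \emph{even though} full smooth fit fails. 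One also records that $w$ satisfies \eqref{eq:HJB} a.e.: on $\mathcal{I}$ the first entry of the maximum vanishes by construction and $-w_c-x\le 0$ is checked from the monotonicity and range of the boundaries in Theorem~\ref{thm:main-nu}(i)--(ii) and Lemma~\ref{lem:R}; on $\mathcal{O}\setminus\mathcal{I}$ one has $-w_c-x=0$ by definition and $(-\tfrac12 w_{xx}+\lambda w)-\lambda x\Phi\le 0$ follows from the explicit form of $w$ there and the signs of $R$ and $k$.

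The core is the verification inequality. Fix $(x,c)\in\mathcal{O}$ and $\nu\in\mathcal{A}_c$. Since $w\in C^1(\mathcal{O})$ with locally bounded weak second $x$-derivative, I would apply a generalised It\^o/change-of-variables formula (obtained by mollifying $w$ in $x$ and passing to the limit using $w_{xx}\in L^\infty_{\mathrm{loc}}$) to $s\mapsto e^{-\lambda s}w(X^x_s,C^{c,\nu}_s)$, splitting $\nu$ into its continuous and purely discontinuous parts and writing each jump's contribution as $w(X_s,C^{c,\nu}_{s+})-w(X_s,C^{c,\nu}_s)=\int_{C^{c,\nu}_s}^{C^{c,\nu}_{s+}}w_c(X_s,y)\,dy$. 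Invoking \eqref{eq:HJB} in the form $-\lambda w+\tfrac12 w_{xx}\ge-\lambda X^x_s\Phi(C^{c,\nu}_s)$ for a.e.\ $s$ and $w_c\ge-X^x_s$, then localising the stochastic integral by a sequence $(\rho_n)$ and taking expectations, one obtains
\begin{equation*}
w(x,c)\le\EE\Big[e^{-\lambda(T\wedge\rho_n)}w\big(X^x_{T\wedge\rho_n},C^{c,\nu}_{T\wedge\rho_n}\big)+\int_0^{T\wedge\rho_n}e^{-\lambda s}\lambda X^x_s\Phi(C^{c,\nu}_s)\,ds+\int_0^{T\wedge\rho_n}e^{-\lambda s}X^x_s\,d\nu_s\Big].
\end{equation*}
Letting $n\to\infty$ and then $T\to\infty$, using the linear growth of $w$ in $x$ (so $e^{-\lambda T}\EE[|w(X^x_T,C^{c,\nu}_T)|]\le Ke^{-\lambda T}(1+|x|+\sqrt{T})\to 0$), the boundedness of $\Phi$ on $[0,1]$, the bound $\nu_\infty\le 1-c$, and the integrability of $\sup_{s\ge0}e^{-\lambda s}|X^x_s|$, dominated convergence gives $w(x,c)\le\mathcal{J}_{x,c}(\nu)$; since $\nu$ is arbitrary, $w\le U$ on $\mathcal{O}$.

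For the reverse inequality I would repeat the computation with the explicit control $\nu^{*}$ of \eqref{op-contr01} and check that every inequality becomes an equality. Before $\tau^{*}$ the pair $(X^x_s,C^{c,\nu^*}_s)$ remains in $\mathcal{I}$, where the first entry of \eqref{eq:HJB} vanishes; $\nu^{*}$ acts only through jumps, and each jump (from $c$ up to $\hat c$ across $\hat\beta$, or from the current level up to $1$ across $\hat\gamma$, including the final fill at $\sigma^*$ along $\hat\gamma(\hat c)=-1/\sqrt{2\lambda}$) occurs along a vertical segment $\{x\}\times[\,\cdot\,,\cdot\,]$ which, by the monotonicity of $\hat\beta$ and $\hat\gamma$ and the bound $\hat\gamma\le-1/\sqrt{2\lambda}$ from Theorem~\ref{thm:main-nu}, lies in the action region (or on its boundary, a null set for the $dy$-integral, where $w_c=-x$ by the $C^1$ matching); hence $w_c(X_s,y)=-X^x_s$ all along it and the jump inequality is an equality. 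Between $\tau_{\hat\beta}$ and $\sigma^*$ the pair is $(X^x_s,\hat c)$ with $X^x_s>-1/\sqrt{2\lambda}$, again in $\mathcal{I}$; and once the inventory reaches $1$ the factor $w(\cdot,1)\equiv0$ annihilates the terminal term, which applies because $C^{c,\nu^*}$ does reach $1$ in a.s.\ finite time by recurrence of $X$. Therefore $\mathcal{J}_{x,c}(\nu^{*})=w(x,c)\ge U(x,c)$, so $U=w$ and $\nu^{*}$ is optimal; the regularity and the variational inequality claimed for $U$ follow from those of $w$ proved above.

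I expect the main obstacle to lie in the two delicate parts of this scheme: first, a rigorous justification of the generalised It\^o formula and of the subsequent interchange of limits for a $w$ that is merely $C^1$ with $L^\infty_{\mathrm{loc}}$ second derivative and for a control $\nu$ that may be genuinely singular (continuous but not absolutely continuous), which forces the mollification/localisation argument together with a priori growth bounds on $w$ and moment/integrability bounds on $X$ and $\nu$; and second, the verification that $\nu^{*}$ is admissible and that \emph{every} jump it induces stays in the action region, which rests on the precise qualitative properties of the two free boundaries---their monotonicity, the exact constant value $-1/\sqrt{2\lambda}$ of $\hat\gamma$ on $[\hat c,1]$, and the ordering $c_o<\hat c$ (Lemma~\ref{lem:R})---established in Section~\ref{construction}.
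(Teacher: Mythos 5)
Your proposal is correct and follows essentially the same route as the paper: construct the candidate $w$ piecewise, establish its regularity and that it solves \eqref{eq:HJB} (the paper's Propositions \ref{prop:solWo}, \ref{prop:C1}, \ref{prop-Wcgeqx}, \ref{prop:C1past}, \ref{prop:VI}), and then run a two-sided verification argument via a weak It\^o formula (the paper invokes the Fleming--Soner version directly, where you invoke the equivalent mollification-plus-limit argument) together with the event decomposition $\{\tau^*=\tau_{\hat\beta}\}$ vs.\ $\{\tau^*=\tau_{\hat\gamma}\}$ to show all inequalities become equalities under $\nu^*$. The only cosmetic differences are the citation for the generalised It\^o formula and that your jump analysis is phrased in terms of vertical segments lying in the action region while the paper computes $\int_0^{\Delta\nu_s}W_c\,du$ term by term, but these are the same observation.
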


The boundaries $\hat{\beta}$ and $\hat{\gamma}$ of Proposition \ref{thm:main-bd} fully characterise the optimal control $\nu^*$ illustrated in Figure \ref{fig:1}, which prescribes to do nothing until the uncontrolled process $X^x$ leaves the interval $(\hat{\gamma}(c),\hat{\beta}(c))$, where $c\in[0,1)$ is the initial {inventory level}. Then, if $\tau_{\hat{\gamma}}<\tau_{\hat{\beta}}$ one should immediately exert all the available control after hitting the lower moving boundary $\hat{\gamma}(c)$. If instead $\tau_{\hat{\gamma}}>\tau_{\hat{\beta}}$ one should initially {increase the inventory to $\hat{c}$} after hitting the upper moving boundary $\hat{\beta}(c)$ and then wait until $X$ hits the new value $\hat{\gamma}(\hat{c})$ of the lower boundary before exerting all remaining available control. 

\begin{figure}[!ht]
\centering
\includegraphics[scale=0.5]{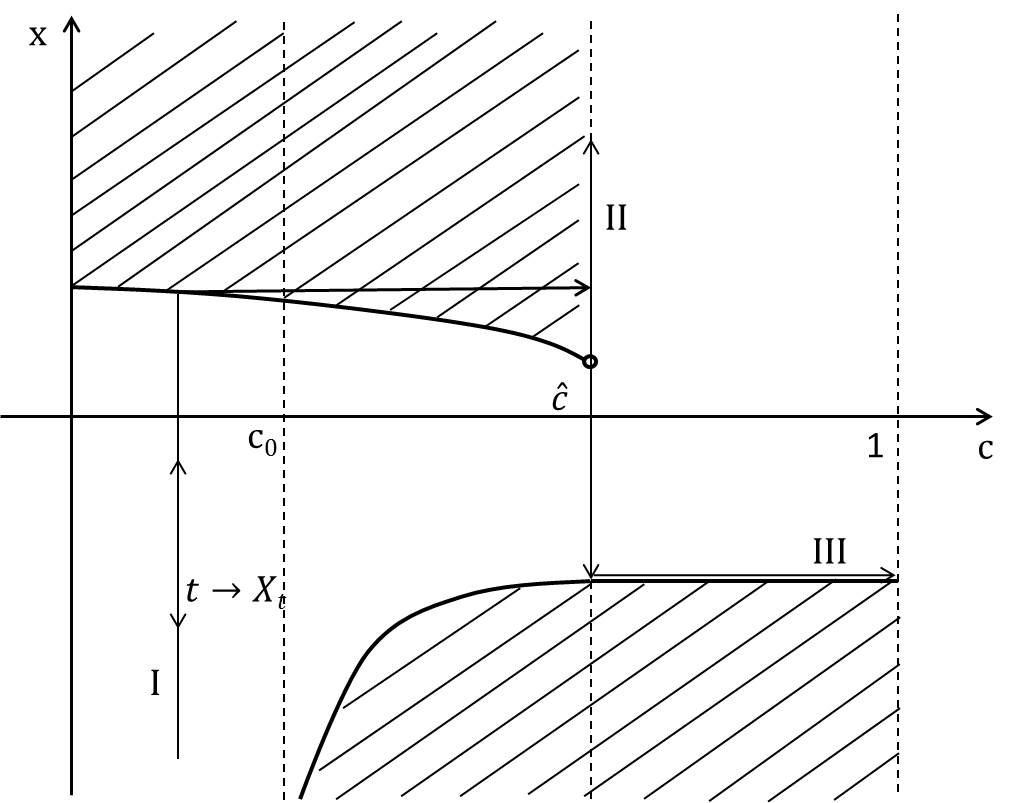}
\caption{\small An illustrative diagram of the the optimal boundaries and of the optimal control $\nu^*$ of \eqref{op-contr01}. The upper boundary $\hat{\beta}$ and the lower boundary $\hat{\gamma}$ split the state space into the inaction region (white) and action region (hatched). When the initial state is $(x,c)$ with $c\in[0,c_o)$ and $x<\hat{\beta}(c)$ one observes the following three regimes: in regime $(I)$ the process $X$ diffuses until hitting $\hat{\beta}(c)$, then {an amount $\Delta\nu=\hat{c}-c$ of control is exerted}, horizontally pushing  the process $(X,C)$ to the regime $(II)$; there $X$ continues to diffuse until it hits $\gamma^o$ and at that point all {remaining control is exercised} and $(X,C)$ is pushed horizontally until the inventory reaches its maximum $(III)$.}\label{fig:1}
\end{figure}

\section{Construction of a candidate value function}
\label{construction}

The direct solution of \eqref{eq:HJB} is challenging in general as it is a free boundary problem with (multiple) non constant boundaries. When necessary, however, for each fixed value of $c$ we will identify an associated optimal stopping problem whose solution is simpler since its free boundaries are given by two points. Our candidate solution $W$ is then effectively obtained by piecing together partial solutions on different domains. 
More precisely, recalling the definitions of $c_o$ and $\hat c$ from \eqref{def-co}-\eqref{def-chat}, we carry out the following steps:
\begin{enumerate}
\item[Step 1)] Directly solve \eqref{eq:HJB} when the initial value of the inventory $c \in [\hat{c},1]$, obtaining a partial candidate solution $W^o$ (Section \ref{sec:step1}).
\item[Step 2)] Identify an associated (parameter-dependent) problem of optimal stopping for $c \in [0,\hat c)$ (Section \ref{sec:auxiliary}),
\item[Step 3)] Solve the stopping problems when $c \in (c_o,\hat{c})$ (Section \ref{sec:step2}; cf. heuristic (B2)).
\item[Step 4)] Solve the stopping problems when $c \in [0,c_o)$ (Section \ref{clessco}; cf. heuristic (B1)).
\item[Step 5)] Construct a partial candidate solution $W^1$ from the OS solutions of steps 3 and 4, verifying that it solves the variational problem \eqref{eq:HJB} when the initial value of the inventory $c \in [0,\hat{c})$ (Section \ref{sec:W1}),
\item[Step 6)] Paste together the partial candidate solutions from steps 1 and 5 to obtain the complete candidate solution $W$, verifying that it solves the variational problem \eqref{eq:HJB} when the initial value of the inventory $c \in [0,1]$ (Section \ref{verification}). 
\end{enumerate}

We begin the construction of a candidate value function by establishing the finiteness of the expression \eqref{valuefunction} under our assumptions. 
\begin{prop}
\label{sublineargrowth}
Let $U$ be as in \eqref{valuefunction}. Then there exists $K>0$ such that $|U(x,c)| \leq K(1 + |x|)$ for any $(x,c)\in \RR \times [0,1]$.
\end{prop}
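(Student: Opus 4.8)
The goal is a linear bound $|U(x,c)| \le K(1+|x|)$ uniform in $c \in [0,1]$. The plan is to bound $U$ from above and below separately by exhibiting explicit admissible controls for the upper bound and by a direct estimate of the cost functional for the lower bound. For the \emph{upper} bound, note that $U(x,c) \le \mathcal{J}_{x,c}(0) = \EE[\int_0^\infty e^{-\lambda s}\lambda X^x_s \Phi(C^{c,0}_s)\,ds]$, and since $\nu \equiv 0$ gives $C^{c,0}_s = c$, Fubini's theorem (justified because $\EE|X^x_s| \le |x| + \EE|B_s| \le |x| + \sqrt{s}$ and $\Phi(c)$ is bounded on $[0,1]$) yields $\mathcal{J}_{x,c}(0) = \Phi(c)\int_0^\infty e^{-\lambda s}\lambda\, \EE[X^x_s]\,ds = \Phi(c)\, x$ because $\EE[X^x_s]=x$ and $\int_0^\infty \lambda e^{-\lambda s}ds = 1$. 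Hence $U(x,c) \le x\,\Phi(c) \le \bar\Phi\,|x|$ where $\bar\Phi := \max_{[0,1]}\Phi < \infty$ by Assumption \ref{ass-Phi} and continuity.

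For the \emph{lower} bound I would estimate $\mathcal{J}_{x,c}(\nu)$ from below uniformly over $\nu \in \mathcal{A}_c$. Since $\Phi \ge 0$ and $\nu$ is nondecreasing with total variation at most $1-c \le 1$, we have, $\PP$-a.s.,
\begin{equation}
\label{eq:lower-est}
\mathcal{J}_{x,c}(\nu) \ge \EE\Big[\int_0^\infty e^{-\lambda s}\lambda X^x_s \Phi(C^{c,\nu}_s)\,ds\Big] + \EE\Big[\int_0^\infty e^{-\lambda s} X^x_s\, d\nu_s\Big] \ge -\bar\Phi\,\EE\Big[\int_0^\infty \lambda e^{-\lambda s}|X^x_s|\,ds\Big] - \EE\Big[\sup_{s\ge0} e^{-\lambda s}|X^x_s|\Big].
\end{equation}
The first expectation is controlled by Fubini and $\EE|X^x_s| \le |x|+\sqrt{s}$, giving a bound of the form $c_1(1+|x|)$. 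For the second term, $\sup_{s\ge0} e^{-\lambda s}|X^x_s| \le |x| + \sup_{s\ge0} e^{-\lambda s}|B_s|$, and $\EE[\sup_{s\ge0}e^{-\lambda s}|B_s|]$ is a finite absolute constant (the exponentially discounted Brownian motion is a.s.\ bounded with integrable supremum — one may reduce to Doob's $L^2$ inequality on dyadic blocks, or simply invoke that $e^{-\lambda s}B_s \to 0$ a.s.\ and use standard Gaussian tail estimates). Combining, $\mathcal{J}_{x,c}(\nu) \ge -K(1+|x|)$ with $K$ independent of $\nu$ and $c$; taking the infimum gives $U(x,c) \ge -K(1+|x|)$.

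Together with the upper bound this yields $|U(x,c)| \le K(1+|x|)$ after enlarging $K$. The only mildly delicate point is the bound on $\EE[\sup_{s\ge0}e^{-\lambda s}|B_s|]$; everything else is routine Fubini plus the identities $\EE[X^x_s]=x$ and $\int_0^\infty \lambda e^{-\lambda s}ds=1$. I would handle the supremum estimate by writing $\sup_{s\ge0} e^{-\lambda s}|B_s| \le \sum_{n\ge0} e^{-\lambda n}\sup_{s\in[n,n+1]}|B_s|$ and using $\EE[\sup_{s\in[n,n+1]}|B_s|] \le \EE|B_n| + \EE[\sup_{s\le1}|B_{n+s}-B_n|] \le \sqrt{n} + C$, so the series converges. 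This is the main (and only real) obstacle, and it is a standard maximal-inequality argument.
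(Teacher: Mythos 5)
Your proof is correct, but it handles the control term differently from the paper. The paper obtains the two-sided bound $|\mathcal{J}_{x,c}(\nu)|\le K(1+|x|)$ uniformly over $\nu\in\mathcal{A}_c$ in one stroke: it integrates the Stieltjes term by parts, so that (after checking that $\int_0^t e^{-\lambda s}\nu_s\,dB_s$ is a uniformly integrable martingale and that the boundary term vanishes) $\EE\big[\int_0^\infty e^{-\lambda s}X^x_s\,d\nu_s\big]=\EE\big[\int_0^\infty \lambda e^{-\lambda s}X^x_s\nu_s\,ds\big]$, and then everything is controlled by $\EE\big[\int_0^\infty \lambda e^{-\lambda s}|X^x_s|(\Phi(C^{c,\nu}_s)+\nu_s)\,ds\big]$ using only Fubini, $\EE|X^x_s|\le |x|+\sqrt{s}$, $\Phi\le\Phi(0)$ and $\nu_s\le1$. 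You instead take $\nu\equiv0$ for the upper bound (which gives the sharper $U(x,c)\le x\Phi(c)$) and, for the lower bound, estimate $\big|\int_0^\infty e^{-\lambda s}X^x_s\,d\nu_s\big|\le \sup_{s\ge0}e^{-\lambda s}|X^x_s|$ using that $\nu$ has total mass at most $1$; this trades the stochastic integration by parts for the maximal estimate $\EE\big[\sup_{s\ge0}e^{-\lambda s}|B_s|\big]<\infty$, which you correctly justify by summing over unit blocks. Both routes are valid and of comparable length: the paper's avoids any maximal inequality, while yours avoids having to verify uniform integrability of the stochastic integral and the vanishing of the boundary term $e^{-\lambda t}X^x_t\nu_t$.
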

\begin{proof}
Take $\nu \in \mathcal{A}_c$ and integrate by parts the cost term $\int_0^{\infty}e^{-\lambda s}X^x_s d\nu_s$ in \eqref{nonconvex} noting that $M_t:=\int_0^t e^{-\lambda s}\nu_s dB_s$ {is a uniformly integrable martingale}. Then by well known estimates for Brownian motion we obtain
\begin{equation}
\label{Usublinear}
|\mathcal{J}_{x,c}(\nu)|\leq \EE\bigg[\int_0^{\infty}e^{-\lambda s}\lambda |X^x_{s}|\big[\Phi(
C^{c,\nu}_{s}) + \nu_s\big]ds \bigg] \leq K(1 + |x|),
\end{equation}
for some $K>0$, since $\Phi(c) \leq \Phi(0)$, $c \in [0,1]$ by Assumption \ref{ass-Phi} and  $\nu\in \mathcal{A}_c$ is bounded from above by $1$. By \eqref{Usublinear} and the arbitrariness of $\nu \in \mathcal{A}_c$ the proposition is proved.
\end{proof}

\subsection{Step 1: initial value of inventory $c \in [\hat{c},1]$}
\label{sec:step1}
{We formulate the first heuristic of Section \ref{sec:heuristic} mathematically by} writing \eqref{eq:HJB} as a free boundary problem, to find the couple of functions $(u,\gamma)$, with $u\in C^1(\RR\times[\hat{c},1])$ and $U_{xx}\in L^{\infty}_{loc}(\RR\times(\hat{c},1))$, solving
\begin{align}\label{fbp-cbiggerhatc}
\left\{
\begin{array}{ll}
\tfrac{1}{2}u_{xx}(x,c)-\lambda u(x,c)=-\lambda x\Phi(c) & \text{for $x>\gamma(c)$, $c\in[\hat{c},1)$}\\[+4pt]
\tfrac{1}{2}u_{xx}(x,c)-\lambda u(x,c)\ge-\lambda x\Phi(c) & \text{for a.e.~$(x,c)\in\RR\times[\hat{c},1)$}\\[+4pt]
u_c(x,c)\ge -x & \text{for $x\in\RR$, $c\in[\hat{c},1)$}\\[+4pt]
u(x,c)=x(1-c)& \text{for $x\le \gamma(c)$, $c\in[\hat{c},1]$}\\[+4pt]
u_x(x,c)=(1-c)& \text{for $x\le \gamma(c)$, $c\in[\hat{c},1)$}\\[+4pt]
u(x,1)=0 &\text{for $x\in\RR$}.
\end{array}
\right.
\end{align}
\begin{prop}
\label{prop:solWo}
Recall $R$ from \eqref{def:R}. Then the couple $(W^o,\gamma^o)$ defined by $\gamma^o:=-\tfrac{1}{\sqrt{2\lambda}}$ and
\begin{equation}
\label{Wo}
W^o(x,c):=
\left\{
\begin{array}{ll}
-\tfrac{1}{\sqrt{2\lambda}}e^{-1}R(c)\phi_{\lambda} (x) + x \Phi(c), &  x > \gamma^o,\\[+6pt]
x(1-c), & x \leq \gamma^o,
\end{array}
\right.
\end{equation}
solves \eqref{fbp-cbiggerhatc} with $W^o\in C^1(\RR\times[\hat{c},1])$ and $W^o_{xx}\in L^{\infty}_{loc}(\RR\times(\hat{c},1))$.
\end{prop}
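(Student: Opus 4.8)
The plan is to verify directly that the explicit pair $(W^o,\gamma^o)$ meets every line of the free boundary problem \eqref{fbp-cbiggerhatc} together with the stated regularity; it is worth recording first how the particular form \eqref{Wo} is singled out, since this explains why the verification goes through. Since $\phi_\lambda$ solves $(\mathbb{L}_X-\lambda)u=0$ and $x\mapsto x\Phi(c)$ is a particular solution of $(\mathbb{L}_X-\lambda)u=-\lambda x\Phi(c)$, any function of the form $x\mapsto A(c)\phi_\lambda(x)+x\Phi(c)$ automatically satisfies the first equation of \eqref{fbp-cbiggerhatc} on $\{x>\gamma(c)\}$, the coefficient of $\psi_\lambda$ being dropped to respect the sub-linear growth of Proposition \ref{sublineargrowth}. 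Imposing continuity of the candidate and of its $x$-derivative at $x=\gamma(c)$ against the affine branch $x(1-c)$ gives $A(c)\phi_\lambda(\gamma(c))=\gamma(c)R(c)$ and $A(c)\phi_\lambda'(\gamma(c))=R(c)$ (using $R(c)=1-c-\Phi(c)$ from \eqref{def:R}); taking the ratio and noting that $\phi_\lambda(x)/\phi_\lambda'(x)\equiv-1/\sqrt{2\lambda}$ forces $\gamma(c)\equiv-1/\sqrt{2\lambda}=\gamma^o$ and then $A(c)=-e^{-1}R(c)/\sqrt{2\lambda}$, i.e.\ exactly \eqref{Wo}.

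With these values in hand I would check the regularity by direct substitution: using $\phi_\lambda(\gamma^o)=e$ and $\phi_\lambda'(\gamma^o)=-\sqrt{2\lambda}\,e$ one sees that $W^o$ and $W^o_x$ agree across $x=\gamma^o$, and the identity $R'(c)+\Phi'(c)=-1$ shows in the same way that $W^o_c$ agrees there as well. Because $\gamma^o$ is constant the two branches are each smooth on their fixed domains and glue in a $C^1$ fashion along $\{x=\gamma^o\}$, so $W^o\in C^1(\RR\times[\hat c,1])$; at $c=1$ both branches vanish since $R(1)=\Phi(1)=0$, which also delivers the terminal condition $W^o(\cdot,1)\equiv0$. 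Differentiating twice in $x$ on $\{x>\gamma^o\}$ gives $W^o_{xx}(x,c)=-\sqrt{2\lambda}\,e^{-1}R(c)\phi_\lambda(x)$, which is locally bounded, while $W^o_{xx}\equiv0$ on $\{x<\gamma^o\}$; hence $W^o_{xx}\in L^\infty_{loc}(\RR\times(\hat c,1))$.

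It then remains to verify the two inequalities of \eqref{fbp-cbiggerhatc}. On $\{x<\gamma^o\}$ one computes $\tfrac12 W^o_{xx}-\lambda W^o+\lambda x\Phi(c)=-\lambda x R(c)$, which is $\ge0$ because $x<\gamma^o<0$ and, by Lemma \ref{lem:R}, $R(c)>0$ on $[\hat c,1)$; on $\{x>\gamma^o\}$ this expression is zero. For the gradient constraint $W^o_c\ge-x$ there is equality on $\{x\le\gamma^o\}$, whereas for $x>\gamma^o$ a short computation using $\Phi'(c)+1=-R'(c)$ yields
\[
W^o_c(x,c)+x \;=\; -R'(c)\Big(\tfrac{1}{\sqrt{2\lambda}}e^{-1}\phi_\lambda(x)+x\Big).
\]
Here $-R'(c)\ge0$ on $[\hat c,1)$ by strict concavity of $R$ and $R'(\hat c)=0$ (see Lemma \ref{lem:R} and \eqref{def-chat}), so it suffices to show that $g(x):=\tfrac{1}{\sqrt{2\lambda}}e^{-1}\phi_\lambda(x)+x\ge0$ for $x\ge\gamma^o$; but $g$ is strictly convex and satisfies $g(\gamma^o)=g'(\gamma^o)=0$, so $\gamma^o$ is its global minimiser and $g\ge0$ everywhere. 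The affine identities $u=x(1-c)$, $u_x=1-c$ on $\{x\le\gamma^o\}$ hold by construction, completing the verification.

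The one non-routine point is this last gradient inequality on the continuation region: its proof needs the sign of $R'$ — which comes precisely from $R'(\hat c)=0$ and strict concavity, and is therefore exactly what restricts the argument to $c\in[\hat c,1)$ — to be combined with the fact that the auxiliary function $g$ is convex with a double zero exactly at $\gamma^o$. I expect that step to carry the real content of the statement, everything else being substitution and elementary sign considerations.
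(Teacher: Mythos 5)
Your proposal is correct and follows essentially the same route as the paper: determine $B^o(c)$ and $\gamma^o$ from value matching and smooth pasting after discarding the $\psi_\lambda$ term by sublinear growth, then verify the remaining conditions of \eqref{fbp-cbiggerhatc} by direct computation. Your treatment of the gradient constraint, writing $W^o_c(x,c)+x=-R'(c)\,g(x)$ with $g$ convex and having a double zero at $\gamma^o$, is just the integrated form of the paper's identity \eqref{eq:BM-smfitWo} (indeed $g'(x)=1-e^{-1}\phi_\lambda(x)$ is exactly the second factor there), so the two arguments coincide in substance.
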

\begin{proof}
A general solution to the first equation in \eqref{fbp-cbiggerhatc} is given by $$u(x,c)= A^o(c)\psi_{\lambda}(x) + B^o(c)\phi_{\lambda}(x) + x\Phi(c), \qquad x >\gamma(c),$$ with $A^o$, $B^o$ and $\gamma$ to be determined.
Since $\psi_{\lambda}(x)$ diverges with a superlinear trend as $x\to\infty$ and $U$ has sublinear growth by Proposition \ref{sublineargrowth}, we set $A^o(c)\equiv 0$.
Imposing the fourth and fifth {conditions} of \eqref{fbp-cbiggerhatc} for $x=\gamma(c)$ and recalling the expression for $R$  in \eqref{def:R} we have
\begin{equation}
\label{Aogammao}
B^o(c):=-\tfrac{1}{\sqrt{2\lambda}}e^{-1}R(c), \qquad \gamma(c)=\gamma^o=-\tfrac{1}{\sqrt{2\lambda}}.
\end{equation}
This way the function $W^o$ of \eqref{Wo} clearly satisfies $W^o(x,1)=0$, $W^o_x$ is continuous by construction and by some algebra it is not difficult to see that $W^o_c$ is continuous on $\RR\times[\hat{c},1]$ with $W^o_c(\gamma^o,c) = -\gamma^o$, $c\in[\hat{c},1]$. Moreover one also has
\begin{align}\label{eq:BM-smfitWo}
W^o_{cx}(x,c)+1=(1+\Phi'(c))\big(1-e^{-1}\phi_\lambda(x)\big)\ge0,\qquad x>\gamma^o, c\in[\hat{c},1],
\end{align}
and hence $W^o_{cx}(\gamma^o+,c)=-1$, for $c\in[\hat{c},1]$, i.e.~the smooth fit condition holds, and $W^o_c(x,c)\ge-x$ on $\RR\times[\hat{c},1]$ as required. It should be noted that $W^o_{xx}$ fails to be continuous across the boundary although it remains bounded on any compact subset of $\RR\times[\hat{c},1]$.

Finally we observe that
\begin{align}\label{eq:BM-subharm}
\tfrac{1}{2}W^o_{xx}(x,c)-\lambda W^o(x,c)=-\lambda x(1-c)\ge-\lambda x \Phi(c)\quad\text{for $x\le\gamma^o$, $c\in[\hat{c},1]$,}
\end{align}
since $\gamma^o<0$ and $R(c)\ge0$ on $c\in[\hat{c},1]$.
\end{proof}

\begin{remark}
\label{rem:connection} 
We may observe a double connection to optimal stopping problems here, as follows:

1. We could have applied heuristic (A) from Section \ref{sec:heuristic}, approaching this sub-problem as one of optimal stopping. However the free boundary turns out to be constant for $c \in [\hat c,1]$ and the direct solution of \eqref{eq:HJB} is straightforward in this case. Links to OS are, however, more convenient in the following sections.

2. Alternatively we may differentiate the explicit solution \eqref{Wo} with respect to $c$. Then holding $c\in[\hat{c},1)$ constant it is straightforward to confirm that $W^o_c$ solves the free boundary problem associated to the following OS problem:
\begin{align}\label{eq:OSw}
w(x,c):=\sup_{\tau\ge0}\EE\Big[\lambda\Phi'(c)\int_0^\tau{e^{-\lambda t} X^x_t dt}-e^{-\lambda\tau}X^x_\tau\Big].
\end{align}
This differential connection to optimal stopping is formally the same as the differential connection previously observed in convex SSC problems (see~\cite{KaratzasShreve84}). 
\end{remark}


\subsection{Step 2: an auxiliary problem of optimal stopping for $c \in [0,\hat c)$}
\label{sec:auxiliary}

We now use heuristics (B1) and (B2) from Section \ref{sec:heuristic} to identify an associated parametric family of optimal stopping problems, which are solved in this section.
More precisely we conjecture here (and will verify in Section \ref{verification}) that for $x\in\RR$ and $c\in[0,\hat{c})$, the value function $U(x,c)$ equals
\begin{equation}
\label{Wstar}
W^1(x,c):=\inf_{\tau \geq 0} \EE\bigg[\int_0^{\tau}e^{-\lambda t}\lambda X^x_t\Phi(c) dt + e^{-\lambda\tau}X^x_{\tau}(\hat{c} - c) + e^{-\lambda\tau}W^o(X^x_{\tau},\hat{c})\bigg],
\end{equation}
where the optimisation is taken over the set of $(\mathcal{F}_t)$-stopping times valued in $[0,\infty)$, $\PP$-a.s.
We begin by noting that It\^o's formula may be used to express \eqref{Wstar} as an OS problem in the form:
\begin{equation}
\label{Wstar2}
W^1(x,c)=x\Phi(c) + V(x,c),
\end{equation}
where
\begin{eqnarray}
\label{def-V}
V(x,c)&:=&\inf_{\tau \geq 0} \EE\big[ e^{-\lambda\tau}G(X^x_{\tau},c)\big], \\
G(x,c)&:=&x(\hat{c}- c -\Phi(c)) + W^o(x,\hat{c}). \label{def-G}
\end{eqnarray}
Note that $G\in C(\RR\times[0,\hat{c}])$, $|G(x,c)|\le C(1+|x|)$ for suitable $C>0$ and {$x\mapsto \EE\big[ e^{-\lambda\tau}G(X^x_{\tau},c)\big]$ is continuous for any fixed $\tau$ and $c\in[0,\hat{c})$}. Then from standard theory an optimal stopping time is $\tau_*:=\inf\{t\ge0:X^x_t\in\cS_c\}$ where
\begin{align}
\label{regions}
\cC_c: = \{x\in\RR: V(x,c) < G(x,c)\} \quad \mbox{and} \quad \cS_c := \{x\in\RR: V(x,c)=G(x,c)\}
\end{align}
are continuation and stopping regions respectively, and $V$ is finite valued. 

The solution of the parameter-dependent optimal stopping problem \eqref{def-V} is somewhat complex and we will apply the geometric approach originally introduced in \cite{Dynkin}, Chapter 3, for Brownian motion and expanded in \cite{DayKar}. The solutions are illustrated in Figures \ref{fig:H1} and \ref{fig:H2} in a sense which will be clarified in Proposition \ref{prop:DayKar}. This allows the analytical characterisation of the optimal stopping boundaries as $c$ varies and thus the study of their properties, avoiding the difficulties encountered in the more direct approach of \cite{DeAFeMo14}.
As in \cite{DayKar}, eq.\ (4.6), we define
\begin{equation}
\label{def-F}
F_{\lambda}(x):=\frac{\psi_{\lambda}(x)}{\phi_{\lambda}(x)} = e^{2\sqrt{2\lambda}x}, \qquad x \in \RR,
\end{equation}
together with its inverse
\begin{equation}
\label{def-Finverse}
F_{\lambda}^{-1}(y)=\frac{1}{2\sqrt{2\lambda}}\ln(y), \qquad y > 0,
\end{equation}
and the function
\begin{align}\label{def-H}
H(y,c):=
\left\{
\begin{array}{ll}
\frac{G(F^{-1}_\lambda(y),c)}{\phi_\lambda(F^{-1}_\lambda(y))}, & y>0\\[+4pt]
0 & y=0.
\end{array}
\right.
\end{align}
We can now restate part of Proposition 5.12 and Remark 5.13 of \cite{DayKar} as follows.
\begin{prop}\label{prop:DayKar}
Fix $c\in {[0,}\hat{c})$ and let $Q(\,\cdot\,,c)$ be the largest non-positive convex minorant of $H(\,\cdot\,,c)$ (cf.~\eqref{def-H}), then $V(x,c)=\phi_\lambda(x)Q(F_\lambda(x),c)$ for all $x\in\RR$. Moreover $\cS_c=F^{-1}_\lambda(\cS^Q_c)$, where $\cS^Q_c:=\{y>0:Q(y,c)=H(y,c)\}$ (cf.~\eqref{regions}).
\end{prop}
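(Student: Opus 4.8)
\textbf{Proof proposal for Proposition \ref{prop:DayKar}.}

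The plan is to verify that the hypotheses of Proposition 5.12 and Remark 5.13 of \cite{DayKar} are met by the optimal stopping problem \eqref{def-V}, so that the cited results apply verbatim after the change of variables $y=F_\lambda(x)$. The geometric representation in \cite{DayKar} applies to problems of the form $V(x,c)=\inf_\tau\EE[e^{-\lambda\tau}G(X^x_\tau,c)]$ for Brownian motion $X$, under growth and regularity conditions on the payoff $G$. So the first step is to record that these conditions hold in our setting: from \eqref{def-G} and the explicit form \eqref{Wo} of $W^o$ we have $G(\cdot,c)\in C(\RR)$ with $|G(x,c)|\le C(1+|x|)$ uniformly for $c\in[0,\hat c]$, which has already been noted in the text immediately after \eqref{def-G}; moreover the sub-linear growth together with $\phi_\lambda(x)=e^{-\sqrt{2\lambda}x}\to\infty$ as $x\to-\infty$ and $\psi_\lambda(x)=e^{\sqrt{2\lambda}x}\to\infty$ as $x\to+\infty$ guarantees $\limsup_{x\to-\infty}G(x,c)/\phi_\lambda(x)\le 0$ and $\limsup_{x\to+\infty}G(x,c)/\psi_\lambda(x)\le 0$, which is precisely what is needed for the value function to be finite and for the geometric characterisation to be non-degenerate. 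This also explains the boundary value $H(0,c)=0$ in \eqref{def-H}, since $0=F_\lambda(-\infty)$ and $G(x,c)/\phi_\lambda(x)\to 0$ as $x\to-\infty$.

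The second step is the change of variables itself. With $F_\lambda$ as in \eqref{def-F} and $H$ as in \eqref{def-H}, the map $x\mapsto y=F_\lambda(x)$ is a strictly increasing bijection from $\RR$ onto $(0,\infty)$, and for a Brownian motion the scale function is the identity so that $\psi_\lambda/\phi_\lambda=F_\lambda$ is (up to affine transformation) the natural scale of the $\lambda$-killed diffusion. The content of \cite{DayKar}, Proposition 5.12, is that in these coordinates the value function transforms multiplicatively: writing $Q(y,c)$ for the largest non-positive convex minorant of $y\mapsto H(y,c)$ on $(0,\infty)$ (the non-positivity being admissible because the constant function $0$ is itself an upper bound for $V$, as one sees by taking $\tau=+\infty$ and using $G\ge0$ — indeed $G(x,c)=x R(c)+W^o(x,\hat c)$ can fail to be non-negative, so more carefully one uses that $\tau\equiv\infty$ gives value $0$ by the sublinear growth and discounting, hence $V\le 0$), one has $V(x,c)=\phi_\lambda(x)\,Q(F_\lambda(x),c)$, and the contact set transforms as $\cS_c=F^{-1}_\lambda(\cS^Q_c)$ with $\cS^Q_c=\{y>0:Q(y,c)=H(y,c)\}$. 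Since all of this is a direct quotation of \cite{DayKar} once the hypotheses are in place, the proof is essentially a matter of checking that \eqref{def-V} is an admissible instance and then invoking the cited statements; I would state this explicitly and cite the precise equation and proposition numbers.

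The main obstacle, such as it is, is bookkeeping rather than mathematics: one must make sure that the non-positivity normalisation in \cite{DayKar} (their convex minorants are taken among non-positive functions) is compatible with our sign conventions, i.e.\ that $V\le 0$, and that the payoff $G$ — which is only $C^1$ and not $C^2$ across $x=\gamma^o=\hat c$-independent level $-1/\sqrt{2\lambda}$, owing to the kink in $W^o$ inherited from \eqref{Wo} — still satisfies the regularity hypotheses of the geometric method. The latter is not a real difficulty because \cite{DayKar} only requires continuity (and a mild growth bound) of the payoff, not differentiability, so the kink is harmless; I would remark on this explicitly to reassure the reader. One should also note that $H(\cdot,c)$ is continuous on $[0,\infty)$ with the stated value $0$ at the origin, so that its largest non-positive convex minorant $Q(\cdot,c)$ is well-defined and continuous, and that measurability/continuity in the parameter $c$ follows from the corresponding property of $G$; none of this needs more than a sentence. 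The substantive work — establishing monotonicity and regularity of the resulting boundaries as $c$ varies — is deferred to the subsequent sections and is not part of this proposition.
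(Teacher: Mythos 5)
Your proposal is correct and follows essentially the same route as the paper, which offers no proof at all but simply restates Proposition 5.12 and Remark 5.13 of Dayanik--Karatzas after recording the continuity and linear growth of $G$ (noted in the text just after \eqref{def-G}). Your additional checks --- the limsup conditions at $\pm\infty$, the bound $V\le 0$ justifying the non-positive minorant normalisation, and the observation that the lack of $C^2$ regularity of $G$ at $\gamma^o$ is immaterial --- are exactly the hypothesis verification the citation implicitly relies on.
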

Note that characterising $W^1$ is equivalent to characterising $V$, which is in turn equivalent to finding $Q$. The latter and its contact sets $\cS^Q_c$ will be the object of our study in Sections \ref{sec:step2} and \ref{clessco}. 
Fixing $c\in[0,\hat{c})$, we first establish regularity properties of $H$. We have (from \eqref{Wo} and \eqref{def-G})
\begin{equation}
\label{def-G2}
G(x,c)=
\left\{
\begin{array}{ll}
xR(c), & x \le \gamma^o\\[+4pt]
-\frac{1}{\sqrt{2\lambda}}e^{-1}R(\hat{c})\phi_{\lambda}(x) + x(R(c)- R(\hat{c})), & x > \gamma^o.
\end{array}
\right.
\end{equation}
Noting that $\phi_{\lambda}(F_{\lambda}^{-1}(y))=y^{-\frac{1}{2}}$, $y > 0$, we obtain
\begin{equation}
\label{def-H2}
H(y,c)=
\left\{
\begin{array}{lr}
0, & y=0\\[+4pt]
\tfrac{1}{2\sqrt{2\lambda}}R(c) y^{\frac{1}{2}} \ln y, &  0 < y \leq e^{-2}\\[+4pt]
-\tfrac{1}{\sqrt{2\lambda}}e^{-1}R(\hat{c}) +\tfrac{1}{2\sqrt{2\lambda}}  (R(c) - R(\hat{c})) y^{\frac{1}{2}}\ln y, &  \qquad y > e^{-2}.
\end{array}
\right.
\end{equation}

\begin{lemma}\label{lem:Hreg}
The function $H$ belongs to $C^1((0,\infty)\times[0,\hat{c}])\cap C([0,\infty)\times[0,\hat{c}])$ with $H_{yy}\in L^\infty([\delta,\infty)\times[0,\hat{c}])$ for all $\delta>0$ and $H_{yc}\in C((0,\infty)\times[0,\hat{c}])$.
\end{lemma}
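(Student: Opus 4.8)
The plan is to argue directly from the explicit representation \eqref{def-H2}. By Assumption~\ref{ass-Phi} the map $c\mapsto R(c)$ lies in $C^2([0,1])$, and on each of the two open strips $(0,e^{-2})\times[0,\hat c]$ and $(e^{-2},\infty)\times[0,\hat c]$ the function $H$ is a product of a smooth function of $y$ with a $C^2$ function of $c$, hence smooth there. Thus the only substantive points are continuity up to the edge $\{y=0\}$ and the matching of $H$ and its derivatives across the seam $\{y=e^{-2}\}$. For the former I would note that $y^{1/2}\ln y\to 0$ as $y\downarrow 0$ and that $R$ is bounded on $[0,\hat c]$, so $H(y,c)\to 0=H(0,c)$ uniformly in $c$; combined with continuity on $(0,\infty)\times[0,\hat c]$ this gives $H\in C([0,\infty)\times[0,\hat c])$.

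For the matching across $\{y=e^{-2}\}$, the key computation is $\partial_y\big(y^{1/2}\ln y\big)=y^{-1/2}\big(1+\tfrac12\ln y\big)$, which vanishes precisely at $y=e^{-2}$. Hence both one-sided limits of $H_y$ at $y=e^{-2}$ equal $0$, in spite of the jump of the multiplicative coefficient from $R(c)$ to $R(c)-R(\hat c)$; a direct substitution likewise shows that the two branches of $H$ itself agree at $y=e^{-2}$ (both equal $-\tfrac{1}{\sqrt{2\lambda}}e^{-1}R(c)$), and that $H_c=\tfrac{1}{2\sqrt{2\lambda}}R'(c)\,y^{1/2}\ln y$ on both branches, since $\partial_c\big[-\tfrac{1}{\sqrt{2\lambda}}e^{-1}R(\hat c)\big]=0$ and $\partial_c\big[R(c)-R(\hat c)\big]=R'(c)$. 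This establishes $H\in C^1((0,\infty)\times[0,\hat c])$.

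For the second-order claims I would differentiate once more: $\partial_{yy}\big(y^{1/2}\ln y\big)=-\tfrac14 y^{-3/2}\ln y$, so $H_{yy}$ equals $-\tfrac{1}{8\sqrt{2\lambda}}R(c)\,y^{-3/2}\ln y$ on the left branch and $-\tfrac{1}{8\sqrt{2\lambda}}(R(c)-R(\hat c))\,y^{-3/2}\ln y$ on the right. Since $y\mapsto y^{-3/2}\ln y$ is continuous on $[\delta,\infty)$ and tends to $0$ at infinity it is bounded there, and $R$, $R-R(\hat c)$ are bounded on $[0,\hat c]$, whence $H_{yy}\in L^\infty([\delta,\infty)\times[0,\hat c])$; the jump discontinuity of $H_{yy}$ across $\{y=e^{-2}\}$ is irrelevant for the $L^\infty$ statement. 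Finally, $H_{yc}=\partial_c H_y=\tfrac{1}{2\sqrt{2\lambda}}R'(c)\,y^{-1/2}\big(1+\tfrac12\ln y\big)$ on \emph{both} branches, which is a continuous function of $(y,c)$ on $(0,\infty)\times[0,\hat c]$ because $R'\in C^1([0,1])$, giving $H_{yc}\in C((0,\infty)\times[0,\hat c])$.

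There is no genuine obstacle here; the only point requiring care is the matching of $H_y$ (and the continuity of $H_{yc}$) across $\{y=e^{-2}\}$: although the multiplicative coefficient jumps, the factor $1+\tfrac12\ln y$ vanishes at $y=e^{-2}$, so $H_y$ is continuous there, and $H_{yc}$ involves only $R'(c)$ and not $R(\hat c)$, so it too matches.
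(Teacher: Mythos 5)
Your proof is correct and follows essentially the same route as the paper: both compute $H_y$, $H_c$, $H_{yc}$, $H_{yy}$ explicitly from \eqref{def-H2} and observe that continuity of $H_y$ across $\{y=e^{-2}\}$ holds because the factor $1+\tfrac12\ln y$ vanishes there, while $H_c$ and $H_{yc}$ involve only $R'(c)$ and hence agree on both branches. No gaps.
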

\begin{proof}
Since $G$ is continuous in $(x,c)$ the function $H$ is continuous on $(0,\infty)\times[0,\hat{c}]$ by construction and it is easy to verify that $\lim_{(y',c')\to(0,c)}H(y,c)=0$ for any $c\in[0,\hat{c}]$.
Since
\begin{align}
\label{def-Hy}
H_y(y,c)=\frac{1}{2\sqrt{2\lambda}}y^{-\frac{1}{2}} (1 + \frac{1}{2}\ln y)\times
\left\{
\begin{array}{lr}
R(c) , & 0 < y \le e^{-2}\\[+4pt]
R(c) - R(\hat{c}), &  y > e^{-2},
\end{array}
\right. 
\end{align}
then for any $c\in[0,\hat{c}]$, letting $(y_n,c_n)\to (e^{-2},c)$ as $n\to\infty$, $c_n \in [0,\hat{c})$, one has 
\begin{align*}
\lim_{n\to\infty,\: y_n<e^{-2}} H_y(y_n,c_n)=\lim_{n\to\infty,\: y_n>e^{-2}} H_y(y_n,c_n)=0,
\end{align*}
hence $H_y$ is continuous on $(0,\infty)\times[0,\hat{c}]$. Moreover we also have
\begin{align}
\label{Hc} H_c(y;c)&=\frac{1}{2\sqrt{2\lambda}} R'(c)y^{\frac{1}{2}}\ln y \quad \text{on $(0,\infty)\times[0,\hat{c}]$}\\
\label{Hyc} H_{yc}(y;c)&= R'(c) \frac{1}{2\sqrt{2\lambda}}y^{-\frac{1}{2}} (1 + \frac{1}{2}\ln y) \quad \text{on $(0,\infty)\times[0,\hat{c}]$}\\
\label{def-Hyy}
H_{yy}(y;c)&= - \frac{y^{-\frac{3}{2}}}{8\sqrt{2\lambda}}\ln(y) \times
\left\{
\begin{array}{lr}
R(c), & 0 < y \le e^{-2}\\[+4pt]
R(c) - R(\hat{c}), & \qquad y > e^{-2},
\end{array}
\right.
\end{align}
so that the remaining claims easily follow. 
\end{proof}

The sign of $R(c)$ (defined in \eqref{def:R}) will play an important role in determining the geometry of the obstacle $H$. Recalling that $c_o$ is the unique root of $R$ in $(0,1)$, in Sections \ref{sec:step2} and \ref{clessco}
we consider the cases $c \in [0,c_o)$ and $c \in (c_o,\hat c)$ respectively. 
The intermediate case $c=c_o$ is obtained by pasting together the former two in the limits as $c\uparrow c_0$ and $c\downarrow c_0$ and noting that these limits coincide.

\subsubsection{Step 3: initial value of inventory $c \in (c_o, \hat{c})$}
\label{sec:step2}

For $c \in (c_o, \hat{c})$, so that $R(c) > 0$ and $k(c)<0$, we now apply heuristic (B2). Lemma \ref{lem:H} collects some geometric properties of $H$ while Proposition \ref{existencegeometric} enables us to establish that in the present case, the minorant of Proposition \ref{prop:DayKar} has the form illustrated in Figure \ref{fig:H1}.

\begin{figure}[!ht]
\centering
\includegraphics[scale=0.5]{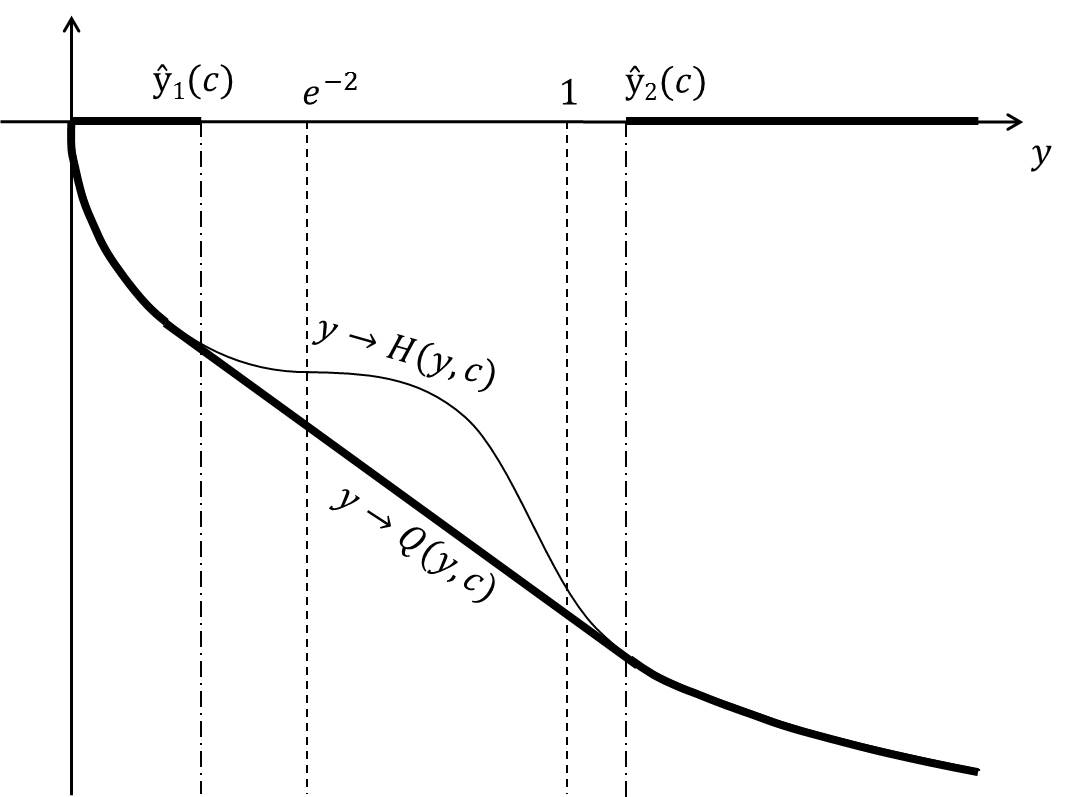}
\caption{\small An illustrative plot of the functions $y \mapsto H(y,c)$ and $y \mapsto Q(y,c)$ (bold) of \eqref{def-H2} and \eqref{WDayKar}, respectively, for fixed $c \in (c_o, \hat{c})$. The bold region $[0,\hat{y}_1(c)]\cup [\hat{y}_2(c),\infty)$ on the $y$-axis is the stopping region $\cS^Q_c$.}
\label{fig:H1}
\end{figure}

\begin{lemma}
\label{lem:H}
Let $c\in(c_o,\hat{c})$ be arbitrary but fixed. The function $H(\,\cdot\,,c)$ is strictly decreasing, with $\lim_{y \downarrow 0}H_y(y,c)=-\infty$ and $\lim_{y \uparrow \infty}H_y(y,c)=0$, and $H(\,\cdot\,,c)$ is strictly convex on $[0,e^{-2})\cup(1,\infty)$ and concave in $[e^{-2},1]$.
\end{lemma}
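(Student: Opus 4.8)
The plan is to verify the claimed properties of $H(\cdot,c)$ directly from the explicit piecewise formulas \eqref{def-H2}--\eqref{def-Hyy}, exploiting that for $c\in(c_o,\hat c)$ we have $R(c)>0$ by Lemma \ref{lem:R}, while $R(\hat c)$ is the positive maximum of $R$, so $R(c)-R(\hat c)<0$. First I would treat monotonicity. From \eqref{def-Hy}, on $(0,e^{-2}]$ the sign of $H_y$ is that of $R(c)(1+\tfrac12\ln y)$; since $1+\tfrac12\ln y<0$ there and $R(c)>0$, we get $H_y<0$ on $(0,e^{-2})$, with $H_y(0+,c)=-\infty$ because $y^{-1/2}|\ln y|\to\infty$. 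On $(e^{-2},\infty)$ the sign of $H_y$ is that of $(R(c)-R(\hat c))(1+\tfrac12\ln y)$: for $y\in(e^{-2},1)$ the bracket $1+\tfrac12\ln y$ changes sign at $y=e^{-2}$... more carefully, $1+\tfrac12\ln y\ge 0$ for $y\ge e^{-2}$, so on $(e^{-2},\infty)$ we have $H_y = (\text{positive})\cdot(R(c)-R(\hat c)) \le 0$ since $R(c)-R(\hat c)<0$. Thus $H_y\le 0$ throughout, strictly negative off the single point $y=e^{-2}$ (where $H_y=0$), so $H(\cdot,c)$ is strictly decreasing; and $H_y(y,c)\to 0$ as $y\uparrow\infty$ is immediate since $y^{-1/2}(1+\tfrac12\ln y)\to 0$. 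Here one should double-check that the two one-sided limits of $H_y$ at $e^{-2}$ agree (both are $0$, already recorded in Lemma \ref{lem:Hreg}), so no kink in the first derivative.

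Next I would establish the convexity/concavity pattern using \eqref{def-Hyy}, where $H_{yy}(y,c) = -\tfrac{y^{-3/2}}{8\sqrt{2\lambda}}\ln y\cdot \kappa$, with $\kappa=R(c)>0$ on $(0,e^{-2})$ and $\kappa=R(c)-R(\hat c)<0$ on $(e^{-2},\infty)$. On $(0,e^{-2})$: $\ln y<0$ and $\kappa>0$ give $H_{yy}>0$, so $H$ is strictly convex on $[0,e^{-2})$. On $(1,\infty)$: $\ln y>0$ and $\kappa<0$ give $H_{yy}>0$, strict convexity on $(1,\infty)$. On $(e^{-2},1)$: $\ln y<0$ and $\kappa<0$ give $H_{yy}<0$, so $H$ is strictly concave on $[e^{-2},1]$, with the endpoints included by continuity of $H_{yy}$ on each closed subinterval (note $H_{yy}$ itself jumps at $e^{-2}$ but is finite and of the right sign on either side, and vanishes at $y=1$ so concavity there is not strict at the single point $y=1$ — one states concavity on the closed interval). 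Finally, convexity of $H$ on $[0,e^{-2})$ extended to include $0$ uses the continuity of $H$ at $0$ from Lemma \ref{lem:Hreg} together with $H(0,c)=0$ and the known behaviour of $H_y$ near $0$.

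The only genuinely delicate point is the matching at the break point $y=e^{-2}$: one must confirm that $H$ and $H_y$ are continuous there (granted by Lemma \ref{lem:Hreg}) so that the global monotonicity statement is legitimate despite the piecewise definition, and one should note explicitly that $H_{yy}$ is discontinuous at $e^{-2}$ — a downward jump in the second derivative — which is exactly the transition from convex to concave, consistent with the picture in Figure \ref{fig:H1}. I do not expect any real obstacle beyond bookkeeping the signs of the three factors $\ln y$, $1+\tfrac12\ln y$, and the coefficient $\kappa\in\{R(c),\,R(c)-R(\hat c)\}$ on the three subintervals $(0,e^{-2})$, $(e^{-2},1)$, $(1,\infty)$, and invoking Lemma \ref{lem:R} for the signs of $R(c)$ and $R(c)-R(\hat c)$. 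The whole proof is a short sign analysis and should be presentable in a few lines.
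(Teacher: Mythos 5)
Your proposal is correct and takes exactly the approach the paper invokes: the paper's proof of Lemma~\ref{lem:H} simply states that it is a consequence of \eqref{def-Hy}, \eqref{def-Hyy} and Lemma~\ref{lem:R}, and your sign bookkeeping of $\ln y$, $1+\tfrac12\ln y$, $R(c)>0$ and $R(c)-R(\hat c)<0$ across the three subintervals is precisely the verification that statement elides. The remarks about matching at $y=e^{-2}$ and the non-strictness at $y=1$ are accurate and not required to go beyond what the lemma claims.
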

\begin{proof}
The proof is a simple consequence of \eqref{def-Hy}, \eqref{def-Hyy} and Lemma \ref{lem:R}, since we assume $c \in( c_o, \hat{c})$.
\end{proof}

The next proposition uses these properties to uniquely define a straight line tangent to $H(\cdot,c)$ at two points $\hat{y}_1(c)< e^{-2}$ and $\hat{y}_2(c)>1$, which will be used to define the moving boundaries $c \mapsto \hat{\beta}(c)$ and $c \mapsto \hat{\gamma}(c)$ introduced in Proposition \ref{thm:main-bd}. The convexity/concavity of $H(\cdot,c)$ then guarantee that the largest non-positive convex minorant $Q(\cdot,c)$ of Proposition \ref{prop:DayKar} is equal to this line on $(\hat{y}_1(c),\hat{y}_2(c))$ and equal to $H(\cdot,c)$ otherwise.
\begin{prop}
\label{existencegeometric}
For any $c\in (c_o,\hat{c})$ there exists a unique couple $(\hat{y}_1(c),\hat{y}_2(c))$ solving the system
\begin{equation}
\label{system-y1y2}
\left\{
\begin{array}{lr}
H_{y}(y_1,c)= H_y(y_2,c) \\[+4pt]
H(y_1,c) -H_y(y_1,c)y_1 = H(y_2,c) -H_y(y_2,c)y_2
\end{array}
\right.
\end{equation} 
with $\hat{y}_1(c)\in(0,e^{-2})$ and $\hat{y}_2(c)>1$.
\end{prop}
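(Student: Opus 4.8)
The plan is to translate the system \eqref{system-y1y2} into the problem of finding a common tangent line to the graph of $H(\cdot,c)$ touching it once on the convex branch $(0,e^{-2})$ and once on the convex branch $(1,\infty)$, and to exploit the geometry established in Lemma \ref{lem:H}. Concretely, for a candidate left tangency point $y_1 \in (0,e^{-2})$ the tangent line to $H(\cdot,c)$ at $y_1$ has slope $H_y(y_1,c)$; since $H(\cdot,c)$ is strictly convex on $(1,\infty)$ with $H_y(\cdot,c)$ strictly increasing there (from \eqref{def-Hyy} and $R(c)-R(\hat c)<0$ combined with $\ln y>0$... more precisely one reads off the sign from \eqref{def-Hyy}), and since $H_y(y,c)\to 0$ as $y\uparrow\infty$ while $H_y(e^{-2},c) = 0$ on the right branch as well, one argues that there is a well-defined map $y_1 \mapsto y_2(y_1)$ giving the unique point on $(1,\infty)$ at which the tangent slope is matched, i.e. solving the first equation of \eqref{system-y1y2}. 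The first step, then, is to make this matching precise: show $H_y(\cdot,c)$ is a strictly monotone bijection from $(0,e^{-2})$ onto $(-\infty,0)$ and from $(1,\infty)$ onto $(-\infty,0)$ (using \eqref{def-Hy}, \eqref{def-Hyy} and the limits in Lemma \ref{lem:H}), so that $y_2(\cdot)$ is well-defined, continuous and strictly monotone.

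The second step is to reduce the remaining (second) equation of \eqref{system-y1y2} to a single scalar equation in $y_1$. Define
\begin{equation*}
g(y_1) := \big[H(y_1,c) - H_y(y_1,c)\,y_1\big] - \big[H(y_2(y_1),c) - H_y(y_2(y_1),c)\,y_2(y_1)\big],
\end{equation*}
which is the difference of the two tangent-line intercepts; a root of $g$ is exactly a solution of the system. Using the explicit formulas \eqref{def-H2} and \eqref{def-Hy} one computes the intercept of the tangent at a point $y$ in closed form (it is of the form $c_0(y)R(c)$ on the left branch and $c_1(y)(R(c)-R(\hat c)) - \tfrac{1}{\sqrt{2\lambda}}e^{-1}R(\hat c)$ on the right branch, with $c_0,c_1$ elementary), so $g$ is a smooth explicit function on $(0,e^{-2})$. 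Then one studies the boundary behaviour: as $y_1\downarrow 0$ the left tangent intercept tends to $0$ (since $H\to 0$ and $y_1 H_y(y_1,c)\to 0$), forcing $y_2(y_1)\downarrow$ to the point where the right-branch tangent slope is $-\infty$, i.e. $y_2\downarrow e^{-2}$... actually $H_y(e^{-2}+,c)=0$, so matching slope $\to -\infty$ is impossible on the right branch unless $y_2\to\infty$; care is needed here. So I would instead parametrise by the common slope $m\in(-\infty,0)$, set $y_1(m)\in(0,e^{-2})$ and $y_2(m)\in(1,\infty)$ the unique preimages, and study $\varphi(m):=[\text{left intercept}]-[\text{right intercept}]$ as $m$ ranges over $(-\infty,0)$, establishing $\varphi(0-)$ and $\varphi(-\infty)$ have opposite signs and $\varphi$ is strictly monotone; then the Intermediate Value Theorem and strict monotonicity give existence and uniqueness of the root, hence of $(\hat y_1(c),\hat y_2(c))$. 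One should also check $\hat y_2(c)>1$ strictly (not merely $\ge 1$), which follows since the tangent at $y_1\in(0,e^{-2})$ has strictly negative slope while $H_y(1,c)$ has a definite sign that rules out tangency exactly at $1$; a short computation with \eqref{def-Hy} confirms $R(c)-R(\hat c)<0$ forces the matched point to lie strictly beyond $1$.

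The third step is to verify, for the limit/endpoint sign computations, the two key facts: (a) $R(c)>0$ and $R(c)-R(\hat c)<0$ for $c\in(c_o,\hat c)$, which is exactly Lemma \ref{lem:R}; and (b) that the concave bridge of $H(\cdot,c)$ on $[e^{-2},1]$ lies strictly above any chord joining a point of the left convex branch to a point of the right convex branch near the tangency configuration — this is what guarantees the common tangent line actually lies below $H$ on all of $(\hat y_1,\hat y_2)$ and is therefore consistent with $Q(\cdot,c)$ being a minorant (though strictly speaking this consistency is used in the next proposition, not in the existence statement itself). For the present proposition only existence and uniqueness of the tangency pair are asserted, so (b) can be deferred.

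The main obstacle I anticipate is the monotonicity of $\varphi$ (equivalently $g$): the two tangent intercepts depend on $y_1$ and $y_2(y_1)$ in opposite directions and one must show the net effect is strictly monotone rather than merely that a root exists. I expect this to follow cleanly once everything is written in the slope parametrisation, because then $\tfrac{d}{dm}[\text{intercept at }y_i(m)] = -y_i(m)$ by the envelope/Legendre-transform identity (the intercept as a function of slope is, up to sign, the Legendre transform of $H$ restricted to each convex branch, whose derivative is $-y_i(m)$), so $\varphi'(m) = -y_1(m)+y_2(m) = y_2(m)-y_1(m) > 0$ since $y_2(m)>1>e^{-2}>y_1(m)$. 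That makes strict monotonicity essentially automatic, and the only remaining care is the explicit sign check of the two limits $\varphi(0-)$ and $\varphi(-\infty)$ using \eqref{def-H2}, which is routine given Lemma \ref{lem:R}.
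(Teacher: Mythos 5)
Your overall architecture (a sliding common-tangent argument, reducing the system to a one-parameter root-finding problem) is close in spirit to the paper's, which instead parametrises by the right tangency point $y\ge 1$ and studies the decreasing function $P_r(y,c)=\sup_{z\in[0,1]}\big(r_y(z)-H(z,c)\big)$, locating $\hat y_2$ as its unique zero and $\hat y_1$ as the point where the supremum is attained. Your Legendre-transform identity $\varphi'(m)=y_2(m)-y_1(m)>0$ is a genuinely nice alternative route to uniqueness. However, there is a concrete gap in your slope parametrisation.

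The claim that $H_y(\cdot,c)$ maps $(1,\infty)$ bijectively onto $(-\infty,0)$ is false. From \eqref{def-Hy}, on the right branch $H_y(y,c)=\tfrac{1}{2\sqrt{2\lambda}}y^{-1/2}(1+\tfrac12\ln y)(R(c)-R(\hat c))$ is bounded: it decreases from $0$ at $y=e^{-2}$ to its minimum $m_0:=H_y(1,c)=\tfrac{1}{2\sqrt{2\lambda}}(R(c)-R(\hat c))>-\infty$ at $y=1$, and then increases back to $0$ as $y\to\infty$. Hence $y_2(m)$ is only defined for $m\in(m_0,0)$, and the endpoint evaluation ``$\varphi(-\infty)$'' that your existence argument relies on is vacuous — you flagged that ``care is needed here'' but then misdiagnosed the issue as being about where $y_2$ tends, rather than about the bounded range of attainable slopes on the right branch. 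The repair is to run the intermediate value argument on $m\in(m_0,0)$: the limit $\varphi(0-)=+\infty$ survives (the right intercept tends to $-\infty$ while the left one tends to $-\tfrac{1}{\sqrt{2\lambda}}e^{-1}R(c)$), and for the other endpoint one must show $\varphi(m_0+)<0$. This follows from the concavity of $H(\cdot,c)$ on $[e^{-2},1]$ (the same ingredient the paper uses to get $P_r(1,c)>0$): the two tangent lines with common slope $m_0$ are parallel, the left one lies strictly below $H(e^{-2},c)$ by strict convexity of $H$ on $(0,e^{-2})$, while the tangent at $y_2=1$ lies above $H$ on $[e^{-2},1]$ by concavity, so the left intercept is strictly smaller than the right one. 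Without this endpoint analysis the existence half of your proof does not close.
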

\begin{proof}
Define
\begin{align}\label{def-r}
r_y(z)&:=H_y(y,c)(z-y)+H(y,c), \qquad {y \geq 1},\,\,z\geq 0,
\\[+5pt]
g(y):=r_y(0) &= -\tfrac{1}{\sqrt{2\lambda}}e^{-1}R(\hat{c}) +\tfrac{1}{2\sqrt{2\lambda}}  (R(c) - R(\hat{c})) y^{\frac{1}{2}}\left(\frac 1 2 \ln y - 1 \right),
\\[+5pt]
P_r(y,c)&:=\sup_{z \in [0, 1]}h(z,y,c),\:\: \text{ where }\:\: h(z,y,c):=r_y(z)-H(z,c),\label{def-P_r}
\end{align}
so that $r_y(\,\cdot\,)$ is the straight line tangent to $H(\cdot,c)$ at $y$, with vertical intercept $g(y)$.
The function $y \mapsto P_r(y,c)$ is decreasing and continuous and it is clear that $P_r(1,c)>0$, since $H(\cdot,c)$ is concave on $[e^{-2},1]$. To establish the existence of a unique $\hat{y}_2(c)>1$ such that $P_r(\hat y_2(c),c)=0$, it is therefore sufficient to find $y > 1$ with $P_r(y,c)<0$.
Such a $y$ exists since $g(y) \rightarrow -\infty$ as $y \to \infty$:
it is clear from \eqref{def-P_r} that if $g(y)<H(1,c)$ then $P_r(y,c)<0$. 

Note that the map $z \mapsto h(z,y,c)$ is continuous, $h(1,y,c)<0$ for $y>1$ (cf. Figure \ref{fig:H1}) and $P_r(\hat y_2(c),c)=0$; then when $y=\hat y_2(c)$, the supremum in \eqref{def-P_r} is attained on the convex portion of $H(\cdot,c)$ (i.e.~in the interior of $[0,1]$) and thus is attained uniquely at a point $\hat y_1(c)\in(0,e^{-2})$.
By construction $(\hat{y}_1(c), \hat{y}_2(c))$ uniquely solves system \eqref{system-y1y2}.
\end{proof}

For $c\in (c_o,\hat{c})$ the minorant $Q$ is therefore
\begin{equation}
\label{WDayKar}
Q(y,c)=
\left\{
\begin{array}{ll}
H(y,c), & y \in [0, \hat{y}_1(c)],\\[+6pt]
H_y(\hat{y}_2(c),c)(y - \hat{y}_2(c)) + H(\hat{y}_2(c),c), & y \in (\hat{y}_1(c), \hat{y}_2(c)),\\[+6pt]
H(y,c), & y \in [\hat{y}_2(c),\infty).
\end{array}
\right.
\end{equation}

The following proposition
is proved in Appendix \ref{someproofs}.

\begin{prop}
\label{monboundaries}
The functions $\hat{y}_1$ and $\hat{y}_2$ of Proposition \ref{existencegeometric} belong to $C^1(c_o,\hat{c})$ with $c\mapsto \hat{y}_1(c)$ increasing and $c\mapsto \hat{y}_2(c)$ decreasing on $(c_o,\hat{c})$ and
\begin{enumerate}
	\item $\lim_{c \uparrow \hat{c}}\hat{y}_1(c) = e^{-2}$;
	\item $\lim_{c \downarrow c_o}\hat{y}_1(c) = 0$; \label{y1limco}
	\item $\hat{y}_2(c) < e^2$ for all $c\in(c_o,\hat{c})$;
	\item $\lim_{c \uparrow \hat{c}}\hat{y}^{'}_1(c) = 0$.
\end{enumerate}
\end{prop}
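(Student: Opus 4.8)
\textbf{Proof plan for Proposition \ref{monboundaries}.}
The strategy is to work directly with the defining system \eqref{system-y1y2}, exploiting that its two equations say precisely that the line $r_{\hat y_2(c)}(\cdot)$ is tangent to $H(\cdot,c)$ at both $\hat y_1(c)$ and $\hat y_2(c)$. Throughout, the key quantitative inputs are the explicit formulas \eqref{def-Hy}, \eqref{Hc}, \eqref{Hyc} and \eqref{def-Hyy}, together with Lemma \ref{lem:R} which gives $R(c)>0$, $R'(c)>0$ on $(c_o,\hat c)$ and $R(\hat c)>R(c)$. First I would establish the $C^1$ regularity of $c\mapsto(\hat y_1(c),\hat y_2(c))$ by the implicit function theorem: writing $\Psi(y_1,y_2;c)$ for the pair of left-minus-right sides of \eqref{system-y1y2}, one computes the $2\times2$ Jacobian $\partial\Psi/\partial(y_1,y_2)$, whose entries involve $H_{yy}$; since $\hat y_1(c)\in(0,e^{-2})$ and $\hat y_2(c)>1$ lie strictly in the convex regions of $H(\cdot,c)$ (Lemma \ref{lem:H}), both $H_{yy}(\hat y_1(c),c)$ and $H_{yy}(\hat y_2(c),c)$ are strictly positive, and a short determinant computation shows the Jacobian is nonsingular (its determinant is a positive multiple of $(\hat y_2-\hat y_1)H_{yy}(\hat y_1)H_{yy}(\hat y_2)$). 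This yields $\hat y_1,\hat y_2\in C^1(c_o,\hat c)$.

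For the monotonicity I would differentiate the system \eqref{system-y1y2} in $c$ and solve the resulting linear $2\times2$ system for $(\hat y_1'(c),\hat y_2'(c))$ using Cramer's rule. The right-hand side of the differentiated system involves $H_{yc}$ and $H_c$, which by \eqref{Hc}--\eqref{Hyc} are both proportional to $R'(c)>0$; combining these with the sign of the Jacobian determinant from the previous step should produce $\hat y_1'(c)>0$ and $\hat y_2'(c)<0$. A cleaner alternative, which I would try first, is a geometric monotonicity argument: as $c$ increases $R(c)$ increases, which tilts the obstacle $H(\cdot,c)$ in a monotone way, and one can compare the tangent lines for two values $c<c'$ directly to conclude that the left contact point moves right and the right contact point moves left (this avoids fighting with signs in the Cramer computation).

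Next come the four limit claims. For item 1, as $c\uparrow\hat c$ one has $R(c)\to R(\hat c)$, so the slope $H_y$ on $(e^{-2},\infty)$ involves the vanishing factor $R(c)-R(\hat c)$; the tangency condition $H_y(\hat y_1,c)=H_y(\hat y_2,c)$ forces $\hat y_1(c)$ to the boundary point $e^{-2}$ where the two branches of $H_y$ meet (recall $\lim_{y\to e^{-2}}H_y(y,c)$ equals the same limit from both sides, computed as $0$ in the proof of Lemma \ref{lem:Hreg}). Item 2: as $c\downarrow c_o$, $R(c)\to0$, so on $(0,e^{-2})$ the obstacle $H(\cdot,c)=\tfrac{1}{2\sqrt{2\lambda}}R(c)y^{1/2}\ln y$ flattens to $0$; the left tangency point, which lives on this shrinking convex piece, must slide down to $y=0$. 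Item 3, $\hat y_2(c)<e^2$, I would get from the structure of $F_2$ (equivalently $g$): at $y=e^2$ the factor $(1-\tfrac12\ln y)$ vanishes, and monotonicity of the relevant function in $y$ together with the sign of $R(\hat c)$ pins $\hat y_2$ strictly below $e^2$ — this is essentially the same bound already asserted in Proposition \ref{thm:main-bd}(ii), so I expect a one-line argument from the explicit form of $F_1, F_2$. Item 4, $\hat y_1'(c)\to0$ as $c\uparrow\hat c$, is the delicate one: here I would take the Cramer-rule expression for $\hat y_1'(c)$ from the monotonicity step and perform an asymptotic analysis as $c\uparrow\hat c$; the numerator carries a factor that vanishes in this limit (because both $\hat y_1(c)$ and $\hat y_2(c)$ approach points where the slopes degenerate, so the $H_{yc}$-type terms are multiplied by quantities tending to $0$), while the denominator (the Jacobian determinant) must be checked to stay bounded away from $0$, using $\hat y_1(c)\to e^{-2}$ from the strictly convex side so that $H_{yy}(\hat y_1(c),c)$ does not blow up or vanish.

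The main obstacle I anticipate is item 4: unlike the other three limits, it is a statement about the derivative and so requires a genuine rate estimate rather than a continuity/compactness argument. One has to control how fast $\hat y_1(c)$ approaches $e^{-2}$ relative to how fast $\hat y_2(c)$, the slopes $H_y$, and the Jacobian entries behave, all simultaneously as $c\uparrow\hat c$ where $R(c)-R(\hat c)\to0$ at a rate governed by $R'(\hat c)=0$ (so quadratically). Getting the bookkeeping right — i.e.\ identifying which terms are $o(1)$ and which are $\Theta(1)$ in the Cramer quotient — is where the real work lies; I would set $\eps:=\hat c-c$ and Taylor-expand $R$, $\hat y_1$, $\hat y_2$ to leading order in $\eps$ and verify the numerator is $o$ of the denominator. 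Everything else reduces to the explicit derivative formulas of Lemma \ref{lem:Hreg} and the sign information of Lemma \ref{lem:R}.
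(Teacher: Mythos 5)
Your overall route --- implicit function theorem on the tangency system, Cramer's rule for $(\hat y_1',\hat y_2')$, and limits read off from the two defining equations --- is exactly the paper's, and your Jacobian computation (determinant $=(\hat y_2-\hat y_1)H_{yy}(\hat y_1,c)H_{yy}(\hat y_2,c)\neq 0$) is correct. Items (1)--(3) as you sketch them are sound, and item (4) is actually \emph{easier} than you fear: since $H_c$ and $H_{yc}$ are both globally proportional to $R'(c)$ (see \eqref{Hc}--\eqref{Hyc}), the entire right-hand side of the differentiated system carries the factor $R'(c)$, which tends to $0$ as $c\uparrow\hat c$ by the very definition \eqref{def-chat} of $\hat c$; one then only needs to check that the remaining Cramer quotient stays bounded (using $\hat y_1(c)\to e^{-2}$ and noting that a factor of $\ln\hat y_2$ appears in both numerator and denominator, so it cancels even if $\hat y_2(\hat c-)=1$). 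No Taylor expansion in $\eps=\hat c-c$ is needed.

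The genuine gap is the sign of $\hat y_2'(c)$. It does \emph{not} follow by "combining the signs" of $H_{yc}$, $H_c$ and the Jacobian determinant: after Cramer's rule the relevant numerator is (up to positive factors) the quantity
\begin{equation*}
\hat{D}=\hat{y}_1^{\frac{1}{2}}\ln \hat{y}_1 - \hat{y}_1\hat{y}_2^{-\frac{1}{2}}\bigl(1 + \tfrac12\ln \hat{y}_2\bigr) + \hat{y}_2^{\frac{1}{2}}\bigl(1 - \tfrac12\ln \hat{y}_2\bigr),
\end{equation*}
whose first term is negative and whose last two terms have no evident dominant sign; it is not sign-definite by inspection. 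The paper resolves this by substituting the two tangency equations \eqref{F1}--\eqref{F2} back into $\hat D$ to collapse it to $-\tfrac{R(\hat c)}{R(c)-R(\hat c)}\,q(\hat y_1)$ with $q(x)=\sqrt{x}\ln x+2e^{-1}$, and then checking $q>0$ on $(0,e^{-2})$. Some step of this kind is unavoidable and is missing from your plan. Your fallback "geometric tilting" argument also does not work as stated: by \eqref{Hc}, increasing $c$ pushes the obstacle \emph{down} on $(0,1)$ and \emph{up} on $(1,\infty)$, so the deformation is not a monotone tilt and the motion of the two contact points cannot be read off by a simple comparison of tangent lines without further work.
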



\subsubsection{{Step 4: initial value of inventory $c \in [0,c_o)$.}}
\label{clessco}

The geometry indicated in Figure \ref{fig:H1} does not hold in general. Indeed in heuristic (B2), a lower repelling boundary is suggested only for values of $c$ close to $\hat c$. It turns out that `close' in this sense means greater than or equal to $c_o$. We now take $c \in [0,c_o)$ and show that in this case the geometry of the auxiliary optimal stopping problems is as in Figure \ref{fig:H2}, so that each of these problems (which are parametrised by $c$) has just one boundary.
The following Lemma has a proof very similar to that of Lemma \ref{lem:H} and it is therefore omitted.

\begin{figure}[!ht]
\centering
\includegraphics[scale=0.35]{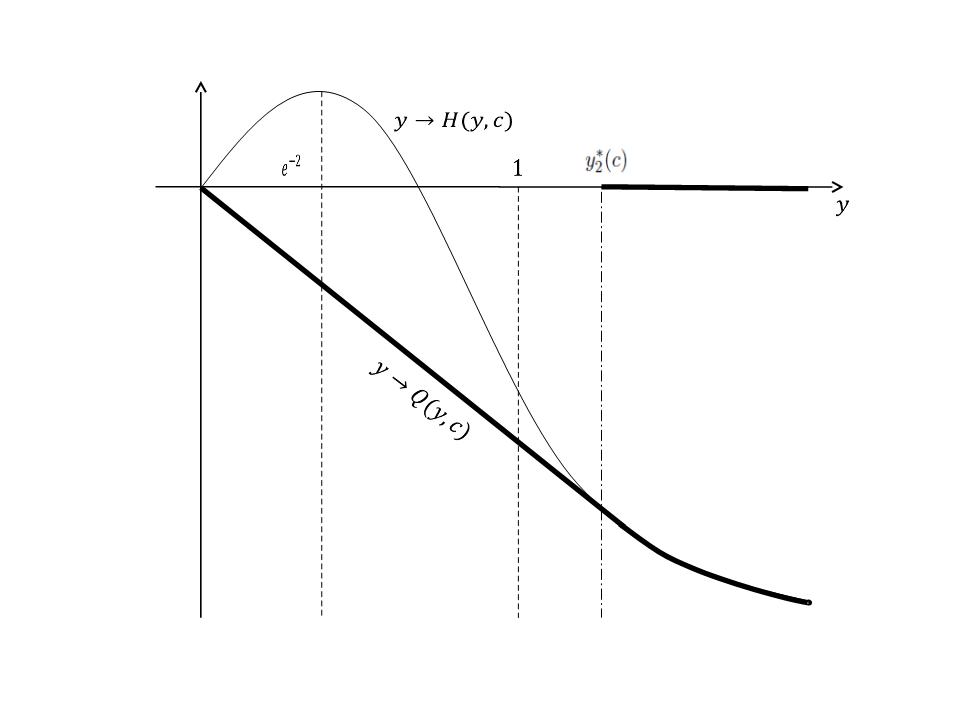}
\caption{\small An illustrative plot of the functions $y \mapsto H(y,c)$ and $y \mapsto Q(y,c)$ (bold) of \eqref{def-H2} and \eqref{WDayKar-clessc0}, respectively, for fixed $c \in [0,c_o)$. The bold interval $[y^*_2(c),\infty)$ on the $y$-axis is the stopping region $\cS^Q_c$.}
\label{fig:H2}
\end{figure}

\begin{lemma}
\label{lem:H2}
Let $c\in[0,c_o)$ be arbitrary but fixed. The function $H(\,\cdot\,,c)$ of \eqref{def-H2} is strictly increasing in $(0,e^{-2})$ and strictly decreasing in $(e^{-2},\infty)$. Moreover, $H(\,\cdot\,,c)$ is strictly concave in the interval $(0,1)$ and it is strictly convex in $(1,\infty)$ with $H_{yy}(1,c)=0$. 
\end{lemma}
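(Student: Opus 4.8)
The plan is to mirror the proof of Lemma \ref{lem:H}, reading off the monotonicity and convexity of $H(\,\cdot\,,c)$ directly from the formulas \eqref{def-Hy} and \eqref{def-Hyy} for $H_y$ and $H_{yy}$, but now exploiting the opposite signs that arise because $c\in[0,c_o)$. Specifically, Lemma \ref{lem:R} gives $R(c)<0$ on $[0,c_o)$ while $R(\hat c)>0$, hence $R(c)-R(\hat c)<0$ as well. First I would treat the region $0<y<e^{-2}$: there $1+\tfrac12\ln y<0$, and since the prefactor $\tfrac{1}{2\sqrt{2\lambda}}y^{-1/2}$ is positive while $R(c)<0$, we get $H_y(y,c)>0$, so $H(\,\cdot\,,c)$ is strictly increasing on $(0,e^{-2})$. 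For $y>e^{-2}$ the relevant coefficient is $R(c)-R(\hat c)<0$; one checks $1+\tfrac12\ln y>0$ for $y>e^{-2}$, so $H_y(y,c)<0$ there and $H(\,\cdot\,,c)$ is strictly decreasing on $(e^{-2},\infty)$.

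Next I would establish concavity/convexity from \eqref{def-Hyy}. On $(0,e^{-2})$ the factor $-\tfrac{y^{-3/2}}{8\sqrt{2\lambda}}\ln y$ is positive (since $\ln y<0$), multiplied by $R(c)<0$, giving $H_{yy}(y,c)<0$; on $(e^{-2},1)$ the sign of $-\ln y$ is still positive and it multiplies $R(c)-R(\hat c)<0$, so again $H_{yy}<0$. Hence $H(\,\cdot\,,c)$ is strictly concave on the whole interval $(0,1)$, with the two pieces matching because $H\in C^1$ by Lemma \ref{lem:Hreg}. For $y>1$ we have $\ln y>0$, so $-\ln y<0$, multiplied by $R(c)-R(\hat c)<0$ yields $H_{yy}(y,c)>0$: strict convexity on $(1,\infty)$. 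The boundary value $H_{yy}(1,c)=0$ is immediate from \eqref{def-Hyy} since $\ln 1=0$.

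This is essentially a sign-chasing exercise, so there is no real obstacle; the one point requiring a little care is that \eqref{def-Hyy} gives $H_{yy}$ only for $y\ne e^{-2}$, so the claim of strict concavity on the full open interval $(0,1)$ should be phrased as: $H(\,\cdot\,,c)$ is $C^1$ across $y=e^{-2}$ (Lemma \ref{lem:Hreg}) and has strictly negative second derivative on $(0,1)\setminus\{e^{-2}\}$, which is enough to conclude strict concavity on $(0,1)$. Accordingly the proof can simply be stated as: ``The proof follows from \eqref{def-Hy} and \eqref{def-Hyy}, Lemma \ref{lem:R} and Lemma \ref{lem:Hreg}, noting that now $R(c)<0$ and $R(c)-R(\hat c)<0$ since $c\in[0,c_o)$,'' exactly in parallel with Lemma \ref{lem:H}, which is why the authors omit it.
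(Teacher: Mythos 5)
Your sign-chasing is correct and matches exactly the argument the paper intends (the proof is omitted there precisely because it is "very similar to that of Lemma \ref{lem:H}", i.e.\ reading the signs of \eqref{def-Hy} and \eqref{def-Hyy} using $R(c)<0$ and $R(c)-R(\hat c)<0$ from Lemma \ref{lem:R}). Your extra remark that the $C^1$ pasting at $y=e^{-2}$ from Lemma \ref{lem:Hreg} is what turns piecewise negativity of $H_{yy}$ into strict concavity on all of $(0,1)$ is a correct and worthwhile point of care.
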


The strict concavity of $H$ in $(0,1)$ suggests that there should exist a unique point $y_2^*(c) > 1$ solving
\begin{equation}
\label{defy2star}
H_y(y,c)y = H(y,c).
\end{equation}
The straight line $r_{y_2^*}:[0,\infty) \mapsto (-\infty,0]$ given by
$$r_{y_2^*}(y) := H(y_2^*(c),c) + H_y(y_2^*(c),c)(y -y_2^*(c))$$
is then tangent to $H$ at $y_2^*(c)$ and $r_{y_2^*}(0)=0$. 

The proof of the next result may be found in Appendix \ref{someproofs}.
\begin{prop}
\label{existencey2star}
For each $c\in[0,c_o)$ there exists a unique point $y_2^*(c) \in (1,e^2)$ solving \eqref{defy2star}. The function $c\mapsto y_2^*(c)$ is decreasing and belongs to $C^1([0,c_o))$. Moreover, for $\hat{y}_2$ as in Proposition \ref{existencegeometric} one has
\begin{equation}
\label{limdxsx1}
y_2^*(c_o-):=\lim_{c \uparrow c_o}y_2^*(c) = \lim_{c \downarrow c_o}\hat{y}_2(c)=:\hat{y}_2(c_o+)
\end{equation}
and
\begin{equation}
\label{limdxsx2}
(y_2^*)'(c_o-):=\lim_{c \uparrow c_o}(y_2^*)'(c) = \lim_{c \downarrow c_o}(\hat{y}_2)'(c)=:(\hat{y}_2)'(c_o+).
\end{equation}
\end{prop}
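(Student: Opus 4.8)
The plan is to reduce \eqref{defy2star} to the scalar equation $F_3(y;c)=0$ and then analyse the latter directly. Since we look for $y_2^*(c)>1$, on the branch $y>e^{-2}$ of \eqref{def-H2} a short computation — expand $H(y,c)-y\,H_y(y,c)$ using \eqref{def-Hy} — shows that \eqref{defy2star} is equivalent to $\bigl(R(c)-R(\hat c)\bigr)\,y^{1/2}\bigl(1-\tfrac12\ln y\bigr)=-2e^{-1}R(\hat c)$, that is to $\varphi(y)=m(c)$ with
\[
\varphi(y):=y^{1/2}\Bigl(1-\tfrac12\ln y\Bigr),\qquad m(c):=\frac{2e^{-1}R(\hat c)}{R(\hat c)-R(c)},
\]
which is precisely \eqref{F3}; geometrically this is the condition $r_{y_2^*}(0)=0$. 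Now $\varphi'(y)=-\tfrac14 y^{-1/2}\ln y<0$ on $(1,\infty)$, $\varphi(1)=1$ and $\varphi(e^2)=0$, so $\varphi$ is a strictly decreasing smooth bijection of $(1,e^2)$ onto $(0,1)$; and by Lemma \ref{lem:R} one has $R(c)<0<R(\hat c)$ for $c\in[0,c_o)$, so $R(\hat c)-R(c)>R(\hat c)$ and $m(c)\in(0,2e^{-1})\subset(0,1)$. Hence $y_2^*(c):=\varphi^{-1}\bigl(m(c)\bigr)$ is the unique root of \eqref{defy2star} in $(1,e^2)$; uniqueness on all of $(0,\infty)$ follows by inspecting the two branches of \eqref{def-H2} together with the behaviour of $\varphi$ on $(0,1]$ (cf.\ Lemma \ref{lem:H2}), which leaves no root there. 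For monotonicity and regularity, note that $R$ is strictly increasing on $[0,\hat c]$ (strictly concave with maximum at $\hat c>c_o$, Lemma \ref{lem:R}), so $c\mapsto m(c)$ is $C^1$ with $m'>0$ on $[0,c_o)$; since $\varphi^{-1}\in C^1\bigl((0,1)\bigr)$ with $(\varphi^{-1})'<0$, the composition $y_2^*=\varphi^{-1}\circ m$ lies in $C^1\bigl([0,c_o)\bigr)$ and is strictly decreasing. (Equivalently, one may apply the implicit function theorem to $F_3$, using $\partial_yF_3\bigl(y_2^*(c);c\bigr)=\varphi'\bigl(y_2^*(c)\bigr)\neq0$.)

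It remains to match $y_2^*$ with $\hat y_2$ at $c_o$. By continuity, $y_2^*(c_o-)=\varphi^{-1}\bigl(m(c_o-)\bigr)=\varphi^{-1}(2e^{-1})$ and $(y_2^*)'(c_o-)=(\varphi^{-1})'(2e^{-1})\,m'(c_o-)$. On the other side, $\hat y_1,\hat y_2$ are monotone on $(c_o,\hat c)$ and valued in $(0,e^{-2})$ and $(1,e^2)$ respectively (Proposition \ref{monboundaries}), so $\hat y_1(c_o+)=0$ and $\hat y_2(c_o+)$ exists in $[1,e^2]$. Imposing that the common tangent of \eqref{system-y1y2} has equal vertical intercepts at $\hat y_1(c)$ and at $\hat y_2(c)$ (equivalently, $F_2(\hat y_1(c),\hat y_2(c);c)=0$, cf.\ Proposition \ref{thm:main-bd}(ii)) yields
\[
\varphi\bigl(\hat y_2(c)\bigr)=\widetilde m(c):=m(c)-\frac{R(c)\,\varphi\bigl(\hat y_1(c)\bigr)}{R(\hat c)-R(c)},\qquad \varphi\bigl(\hat y_1(c)\bigr):=\hat y_1(c)^{1/2}\Bigl(1-\tfrac12\ln \hat y_1(c)\Bigr).
\]
Since $\varphi(y)\to0$ as $y\downarrow0$ and $R(c_o)=0$, letting $c\downarrow c_o$ gives $\widetilde m(c_o+)=2e^{-1}$, whence $\hat y_2(c_o+)=\varphi^{-1}(2e^{-1})=y_2^*(c_o-)$, which is \eqref{limdxsx1}.

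For \eqref{limdxsx2} I would differentiate $\varphi\bigl(\hat y_2(c)\bigr)=\widetilde m(c)$, so that $\hat y_2'(c)=(\varphi^{-1})'\bigl(\widetilde m(c)\bigr)\,\widetilde m'(c)$; since $\widetilde m=m-R(c)\varphi(\hat y_1(c))/(R(\hat c)-R(c))$ it suffices to show that the correction term and its derivative both vanish as $c\downarrow c_o$. The value vanishes because $R(c_o)=0$ and $\varphi\bigl(\hat y_1(c_o+)\bigr)=0$; for the derivative, the only delicate contribution is $R(c)\,\varphi'\bigl(\hat y_1(c)\bigr)\,\hat y_1'(c)=-\tfrac14 R(c)\,\hat y_1(c)^{-1/2}\ln \hat y_1(c)\,\hat y_1'(c)$. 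This is the main obstacle: both $\hat y_1'(c)$ and the slope $\hat y_1(c)^{-1/2}\ln\hat y_1(c)$ diverge individually as $c\downarrow c_o$, so one needs the precise boundary behaviour of $\hat y_1$ at $c_o$. I would extract it from the coupled system \eqref{system-y1y2}: the slope-matching equation $R(c)\hat y_1^{-1/2}(1+\tfrac12\ln\hat y_1)=(R(c)-R(\hat c))\hat y_2^{-1/2}(1+\tfrac12\ln\hat y_2)$ (i.e.\ $F_1(\hat y_1(c),\hat y_2(c);c)=0$) has a finite nonzero right-hand limit, which forces first $R(c)\hat y_1^{-1/2}\to0$ and then $R(c)\hat y_1^{-1/2}\ln\hat y_1$ to converge; combining this with $\hat y_1'(c)\to0$ as $c\downarrow c_o$ (obtained in the same vein as items 2 and 4 of Proposition \ref{monboundaries}) makes the product vanish, so $\widetilde m'(c_o+)=m'(c_o-)$ and hence $(\hat y_2)'(c_o+)=(\varphi^{-1})'(2e^{-1})m'(c_o-)=(y_2^*)'(c_o-)$, which is \eqref{limdxsx2}.
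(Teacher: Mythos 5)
Your treatment of existence, uniqueness, monotonicity and $C^1$-regularity, and your derivation of \eqref{limdxsx1}, coincide in substance with the paper's proof: both reduce \eqref{defy2star} to $F_3(y;c)=0$, use the strict monotonicity of $y\mapsto y^{1/2}(1-\tfrac12\ln y)$ on $(1,e^2)$ together with $0<m(c)<2e^{-1}<1$, and obtain \eqref{limdxsx1} by passing to the limit in $F_2$ using $\hat y_1(c_o+)=0$. For \eqref{limdxsx2} your route (differentiating $\varphi(\hat y_2(c))=\widetilde m(c)$) is a reparametrisation of the paper's computation of $\hat y_2'=D_2/D$, and you correctly isolate the crux: the indeterminate product $R(c)\,\hat y_1(c)^{-1/2}\ln \hat y_1(c)\cdot \hat y_1'(c)$, whose first factor converges to a finite nonzero limit by the argument you give via $F_1$ (this is \eqref{limrate} in the paper).

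There is, however, a genuine gap at the final step: you invoke $\hat y_1'(c)\to 0$ as $c\downarrow c_o$ "in the same vein as items 2 and 4 of Proposition \ref{monboundaries}", but neither item supplies this. Item 2 concerns the limit of $\hat y_1$ itself, not its derivative, and item 4 concerns the limit of $\hat y_1'$ at the \emph{other} endpoint $\hat c$, where the mechanism is $R'(\hat c)=0$; at $c_o$ one has $R'(c_o)>0$, so that argument does not transfer, and the difficulty here is instead the degeneracy $\hat y_1(c)\downarrow 0$. To close the gap you must return to \eqref{derivativesy1} and perform the same asymptotic analysis that the paper carries out for \eqref{derivativesy2}: as $c\downarrow c_o$ the bracket in $D_1$ is dominated by $-\tfrac12\sqrt{\hat y_2/\hat y_1}\,\ln\hat y_1$ and $D$ by $\tfrac{1}{16}R(\hat c)R(c)\hat y_2^{-1/2}\ln\hat y_2\,\hat y_1^{-3/2}\ln\hat y_1$, which yields $\hat y_1'(c)\sim 2R'(c)\hat y_1(c)/R(c)$; combining this with \eqref{limrate}, which gives $\hat y_1(c)/R(c)\sim \hat y_1(c)^{1/2}\ln\hat y_1(c)/(-\ell)\to 0$, one concludes $\hat y_1'(c_o+)=0$ and hence that your delicate term vanishes. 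With this supplement your argument is complete and equivalent to the paper's, which avoids the issue by working directly with the quotient $D_2/D$, where the singular factors $\hat y_1^{-3/2}\ln\hat y_1$ cancel explicitly.
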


For $c\in [0,c_o)$ the minorant $Q$ is therefore
\begin{equation}
\label{WDayKar-clessc0}
Q(y,c)=
\left\{
\begin{array}{ll}
H_y(y_2^*(c),c)y , & y \in [0, y_2^*(c)),\\[+6pt]
H(y,c), & y \in [y_2^*(c),\infty).
\end{array}
\right.
\end{equation}
{Note that \eqref{WDayKar-clessc0} may be rewritten in the form \eqref{WDayKar} taking $\hat{y}_1(c)=0$ and replacing $\hat{y}_2$ by $y_2^*$, since $y^*_2$ solves \eqref{defy2star}.}


\subsubsection{Step 5: partial candidate value function $W^1$}
\label{sec:W1}

In this section we paste together the solutions obtained in sections \ref{sec:step2} and \ref{clessco} across $c=c_o$, and then apply the transformation of Proposition \ref{prop:DayKar} to obtain $V(x,c) =  x\Phi(c) - W^1(x,c)$ (recall \eqref{Wstar2}) and thus the partial candidate solution $W^1$ conjectured at the beginning of Section \ref{sec:auxiliary} for initial inventory levels $c \in [0,\hat c)$. We also establish a free boundary problem solved by $V$, which will help to show that $W^1$ solves \eqref{eq:HJB} for $c \in [0,\hat c)$.

The function $\hat{y}_1 \in C^1(c_o,\hat c)$ of Section \ref{sec:step2} may be extended to a function $\hat{y}_1 \in C^0([0,\hat c))$
by setting $\hat{y}_1(c)=0$ for $c\in[0,c_o]$.
The function $\hat{y}_2 \in C^1(c_o,\hat c)$ may be extended to $\hat{y}_2 \in C^1([0,\hat c))$ (thanks to Proposition \ref{existencey2star}) by setting $\hat{y}_2(c)=y^*_2(c)$ for $c\in[0,c_o]$. With these definitions the expression \eqref{WDayKar}, which we now recall, is valid for all $c \in [0, \hat c)$:
\begin{equation}\label{eq:exprQ}
Q(y,c)=
\left\{
\begin{array}{ll}
H(y,c), & y \in [0, \hat{y}_1(c)],\\[+6pt]
H_y(\hat{y}_2(c),c)(y - \hat{y}_2(c)) + H(\hat{y}_2(c),c), & y \in (\hat{y}_1(c), \hat{y}_2(c)),\\[+6pt]
H(y,c), & y \in [\hat{y}_2(c),\infty),
\end{array}
\right.
\end{equation}
Note that by construction and thanks to the regularity of $H$ and of the boundaries (cf.~Lemma \ref{lem:Hreg} and Proposition \ref{existencey2star}) $Q$ is well defined across $c_o$ and is continuous on $(0,\infty)\times[0,\hat{c})$. We next confirm that $Q$ is continuously differentiable.

\begin{prop}\label{prop:Q}
The function $Q$ lies in $C^1((0,\infty)\times [0, \hat c))$.
\end{prop}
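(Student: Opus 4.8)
The plan is to prove $C^1$-regularity of $Q$ by combining the already-established regularity of $H$ (Lemma \ref{lem:Hreg}) with the regularity of the free boundaries $\hat y_1,\hat y_2$ (Propositions \ref{monboundaries} and \ref{existencey2star}), handling separately the interior of each of the three pieces in \eqref{eq:exprQ}, the two interfaces $y=\hat y_1(c)$ and $y=\hat y_2(c)$, and finally the seam $c=c_o$.

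First I would observe that on the interior of each of the three regions appearing in \eqref{eq:exprQ} the claim is immediate: on $\{y<\hat y_1(c)\}$ and $\{y>\hat y_2(c)\}$ we have $Q=H$, which is $C^1$ by Lemma \ref{lem:Hreg}; on the middle region $Q(y,c)=H_y(\hat y_2(c),c)(y-\hat y_2(c))+H(\hat y_2(c),c)$ is a composition of the $C^1$ maps $c\mapsto\hat y_2(c)$ (Propositions \ref{monboundaries}, \ref{existencey2star}) with $H$ and $H_y$, both $C^1$ in their arguments by Lemma \ref{lem:Hreg} (using $H_{yy}\in L^\infty$ locally together with $H_{yc}\in C$ — more precisely, on any strip bounded away from $y=0$ the function $H$ is $C^{1}$ with locally Lipschitz first derivatives, which suffices to differentiate the composite). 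Next I would check the matching across the interface $y=\hat y_2(c)$: by construction the tangent line $r_{\hat y_2(c)}(\cdot)$ agrees with $H(\cdot,c)$ in value and in $y$-derivative at $y=\hat y_2(c)$, so $Q$ and $Q_y$ match there; for the $c$-derivative one differentiates the identity $Q(\hat y_2(c),c)=H(\hat y_2(c),c)$ in $c$ and uses the first-order matching of the $y$-derivatives to cancel the $\hat y_2'(c)$ terms, leaving $Q_c=H_c$ on both sides. The interface $y=\hat y_1(c)$ is handled identically, using that $r_{\hat y_2(c)}$ is also tangent to $H(\cdot,c)$ at $\hat y_1(c)$ (this is precisely system \eqref{system-y1y2}); when $\hat y_1(c)=0$ (i.e.\ $c\le c_o$) there is no interior interface and the middle region abuts $\{y=0\}$, where one checks continuity of $Q$ and $Q_y$ using $Q(0,c)=0=H(0,c)$ and the explicit slope.

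The remaining point, and the one I expect to be the main obstacle, is $C^1$-regularity across the vertical seam $c=c_o$, where the description of $Q$ changes qualitatively (for $c>c_o$ the stopping region has two components, for $c\le c_o$ only one). Here I would argue that continuity and continuous differentiability follow from the one-sided limit identities already recorded: $\hat y_1(c)\downarrow 0$ as $c\downarrow c_o$ (Proposition \ref{monboundaries}(\ref{y1limco})), while $\hat y_2(c_o-)=\hat y_2(c_o+)$ and $(\hat y_2)'(c_o-)=(\hat y_2)'(c_o+)$ by \eqref{limdxsx1}--\eqref{limdxsx2}. Since $R(c_o)=0$, the obstacle $H(\cdot,c_o)$ has $H(y,c_o)$ reducing (for $y\le e^{-2}$) to the form that makes $\hat y_1(c_o)=0$ consistent, so the two descriptions \eqref{WDayKar} and \eqref{WDayKar-clessc0} literally coincide at $c=c_o$ and, via the matched one-sided derivatives of $\hat y_1$ (which tends to $0$ with, by Proposition \ref{monboundaries}, controlled slope near $c_o$) and of $\hat y_2$, their first partials in both $y$ and $c$ also coincide. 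Putting the interior regularity, the two interface computations, and the seam analysis together yields $Q\in C^1((0,\infty)\times[0,\hat c))$.

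The only genuinely delicate estimate in the above is justifying differentiation of $c\mapsto H_y(\hat y_2(c),c)$ at points where one only knows $H_{yy}\in L^\infty_{loc}$ rather than $H_{yy}\in C$; I would resolve this by noting from the explicit formula \eqref{def-Hyy} that $H_{yy}(\cdot,c)$ is continuous on each of $(0,e^{-2})$ and $(e^{-2},\infty)$ and that the boundaries satisfy $\hat y_2(c)>1>e^{-2}$ and (for $c>c_o$) $\hat y_1(c)<e^{-2}$ strictly, so in a neighbourhood of the seam and of each interface the relevant evaluations of $H_{yy}$ avoid the single bad point $y=e^{-2}$, and classical differentiation under composition applies. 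This makes the chain-rule manipulations legitimate and completes the proof.
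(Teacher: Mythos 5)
Your proposal is correct and follows essentially the same route as the paper: interior regularity of the pieces $H$ and the tangent-line surface $A(y,c)=H_y(\hat y_2(c),c)(y-\hat y_2(c))+H(\hat y_2(c),c)$, value and $y$-derivative matching at the interfaces from the tangency conditions \eqref{system-y1y2}, and matching of $Q_c$ by differentiating the identity $H(\hat y_i(c),c)=A(\hat y_i(c),c)$ in $c$ and cancelling the $\hat y_i'(c)$ terms. The extra care you take over the seam $c=c_o$ and over the kink of $H_{yy}$ at $y=e^{-2}$ (which the boundaries $\hat y_1<e^{-2}<1<\hat y_2$ strictly avoid) is handled only implicitly in the paper, via the $C^1$ extension of $\hat y_2$ across $c_o$, but it is consistent with and does not alter the argument.
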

\begin{proof}
Denoting $$A(y,c):=H_y(\hat{y}_2(c),c)(y-\hat{y}_2(c))+H(\hat{y}_2(c),c),$$
the surfaces $A$ and $H$ are clearly $C^1$ on $(0,\infty)\times[0,\hat{c})$ since $\hat{y}_2\in C^1([0,\hat c))$.
As a consequence $Q$ is $C^1$ away from the free boundaries $\hat{y}_1(c)$ and $\hat{y}_2(c)$ and it remains to verify whether the pasting across the boundaries is $C^1$ as well. At the two boundaries we clearly have {(cf.\ Proposition \ref{existencegeometric})}
\begin{align}\label{simple}
&H(\hat{y}_1(c),c)=A(\hat{y}_1(c),c),\quad\text{$c\in(c_o,\hat{c})$ \,\,and}\quad H(\hat{y}_2(c),c)=A(\hat{y}_2(c),c),\quad\text{$c\in[0,\hat{c})$}.
\end{align}

Recall that $\hat{y}_1\in C^1(c_o,\hat{c})$, then an application of the chain rule to the {left hand side} of \eqref{simple} gives
\begin{align}\label{simple2}
H_y(\hat{y}_1(c),c)\hat{y}'_1(c)+H_c(\hat{y}_1(c),c)=A_y(\hat{y}_1(c),c)\hat{y}'_1(c)+A_c(\hat{y}_1(c),c)
\end{align}
for $c\in(c_o,\hat c)$. Hence $H_c(\hat{y}_1(c),c)=A_c(\hat{y}_1(c),c)$ for $c\in(c_o,\hat c)$ since from the construction of $Q$ we know that $Q_y=A_y=H_y$ at the two boundaries. Similar arguments also provide $H_c(\hat{y}_2(c),c)=A_c(\hat{y}_2(c),c)$ for $c\in[0,\hat c)$.
\end{proof}

For the rest of the paper we employ exclusively analytical arguments, working in the coordinate system of the original problem \eqref{problem-intro}. Using  Proposition \ref{prop:DayKar} we therefore set 
\begin{align}\label{def:gammabeta}
\hat{\beta}(c):=
F^{-1}_\lambda(\hat{y}_2(c)), \, c \in [0, \hat c)\quad\text{and } \;
\hat{\gamma}(c):=
\left\{
\begin{array}{ll}
-\infty, & c \in [0, c_o],\\[+6pt]
F^{-1}_\lambda(\hat{y}_1(c)), & c \in  (c_o,\hat c)
\end{array}
\right.
\end{align}
and obtain the following expression for $V$:
\begin{equation}
\label{VDayKar-bis}
V(x,c)=
\left\{
\begin{array}{ll}
G(x,c), & x \in (-\infty, \hat{\gamma}(c)]\\[+6pt]
\phi_{\lambda}(x)\Big[H_y(F_\lambda(\hat{\beta}(c)),c)\Big(F_\lambda(x) - F_\lambda(\hat{\beta}(c))\Big) + H(F_\lambda(\hat{\beta}(c)),c)\Big], & x \in (\hat{\gamma}(c), \hat{\beta}(c))\\[+6pt]
G(x,c), & x \in [\hat{\beta}(c),\infty).
\end{array}
\right.
\end{equation}

\begin{remark}
\label{rem:day}
For $c\in(c_o,\hat{c})$ note that $\hat{y}_1$ and $\hat{y}_2$ solve \eqref{system-y1y2} and the second expression in \eqref{eq:exprQ} may be equivalently rewritten in terms of $\hat{y}_1$, i.e.~$Q(y,c)=H_y(\hat{y}_1(c),c)(y - \hat{y}_1(c)) + H(\hat{y}_1(c),c)$ for $y \in (\hat{y}_1(c), \hat{y}_2(c))$. Analogously \eqref{VDayKar-bis} may be equivalently rewritten in terms of $\hat{\gamma}$, that is $V(x,c)=\phi_{\lambda}(x)\Big[H_y(F_\lambda(\hat{\gamma}(c)),c)\Big(F_\lambda(x) - F_\lambda(\hat{\gamma}(c))\Big) + H(F_\lambda(\hat{\gamma}(c)),c)\Big]$ for $x \in (\hat{\gamma}(c), \hat{\beta}(c))$.
\end{remark}

\begin{coroll}
\label{cor:regBdr}
We have
\begin{itemize}
\item[ i)] The boundary $\hat{\beta}$ lies in $C^1([0,\hat{c}))$ and is strictly decreasing with $\hat{\beta}(c)\in (0,1/\sqrt{2\lambda})$ for all $c\in[0,\hat{c})$;
\item[ii)] The boundary $\hat{\gamma}$ lies in $C^1((c_o,\hat{c}])$ and is strictly increasing with $\hat{\gamma}(c)\le -1/\sqrt{2\lambda}$ for all $c\in[0,\hat{c})$.
\end{itemize}
\end{coroll}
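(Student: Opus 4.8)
The plan is to read the corollary off directly from Propositions \ref{monboundaries} and \ref{existencey2star}, transporting the already-established properties of $\hat{y}_1,\hat{y}_2$ through the map $F_{\lambda}^{-1}(y)=\tfrac{1}{2\sqrt{2\lambda}}\ln y$ (recall \eqref{def-Finverse} and the definitions \eqref{def:gammabeta}), which is $C^{\infty}$ and strictly increasing on $(0,\infty)$. So I would first collect, from those two propositions and the pasting across $c_o$ performed around \eqref{eq:exprQ}, the facts that the extended function $\hat{y}_2$ lies in $C^1([0,\hat{c}))$ and is strictly decreasing with range contained in $(1,e^2)$, while $\hat{y}_1$ lies in $C^1(c_o,\hat{c})$ and is strictly increasing with range contained in $(0,e^{-2})$; the two statements of the corollary then follow by composing with $F_{\lambda}^{-1}$, plus a short argument at the right endpoint $\hat{c}$ for $\hat{\gamma}$.

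For i): on $(c_o,\hat{c})$ Proposition \ref{monboundaries} gives that $\hat{y}_2$ is $C^1$, strictly decreasing and $<e^2$; on $[0,c_o)$ Proposition \ref{existencey2star} gives $\hat{y}_2=y_2^*\in C^1$, strictly decreasing, with $y_2^*(c)\in(1,e^2)$; the matching of value and derivative at $c_o$ in \eqref{limdxsx1}--\eqref{limdxsx2} glues the two strictly decreasing $C^1$ pieces into a single $\hat{y}_2\in C^1([0,\hat{c}))$ which is still strictly decreasing, and $\hat{y}_2(c)>1$ comes from Proposition \ref{existencegeometric}. Since $\hat{\beta}=F_{\lambda}^{-1}\circ\hat{y}_2$, it then lies in $C^1([0,\hat{c}))$ and is strictly decreasing, with $\hat{\beta}(c)\in\big(F_{\lambda}^{-1}(1),F_{\lambda}^{-1}(e^2)\big)=(0,1/\sqrt{2\lambda})$.

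For ii): on $(c_o,\hat{c})$ one has $\hat{\gamma}=F_{\lambda}^{-1}\circ\hat{y}_1$ with $\hat{y}_1\in C^1$ strictly increasing and valued in $(0,e^{-2})$ (Propositions \ref{monboundaries} and \ref{existencegeometric}), hence $\hat{\gamma}\in C^1(c_o,\hat{c})$ is strictly increasing and $\hat{\gamma}(c)<F_{\lambda}^{-1}(e^{-2})=-1/\sqrt{2\lambda}$; together with $\hat{\gamma}\equiv-\infty$ on $[0,c_o]$ this already gives $\hat{\gamma}(c)\le-1/\sqrt{2\lambda}$ for every $c\in[0,\hat{c})$. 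To include the closed endpoint I would use $\hat{\gamma}'(c)=\hat{y}_1'(c)/(2\sqrt{2\lambda}\,\hat{y}_1(c))$ and the limits $\hat{y}_1(\hat{c}-)=e^{-2}$, $\hat{y}_1'(\hat{c}-)=0$ of Proposition \ref{monboundaries} to obtain $\hat{\gamma}(\hat{c}-)=-1/\sqrt{2\lambda}$ and $\hat{\gamma}'(\hat{c}-)=0$, matching the flat continuation $\hat{\gamma}\equiv\gamma^o=-1/\sqrt{2\lambda}$ valid for $c\ge\hat{c}$ (Proposition \ref{prop:solWo}), so that $\hat{\gamma}\in C^1((c_o,\hat{c}])$. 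I do not expect any serious obstacle: essentially all the work has been done in Propositions \ref{monboundaries} and \ref{existencey2star}, and the only points needing a little care are the $C^1$ gluing at $c_o$ for $\hat{\beta}$ (which is exactly what \eqref{limdxsx1}--\eqref{limdxsx2} provide) and the one-sided limits of $\hat{\gamma}$ at $\hat{c}$, where the vanishing of $\hat{y}_1'$ is what makes the derivative agree with the constant continuation.
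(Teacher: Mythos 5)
Your proposal is correct and is exactly the paper's argument: the paper proves this corollary in one line by citing Propositions \ref{monboundaries} and \ref{existencey2star} together with \eqref{def:gammabeta}, and what you have written is simply the detailed expansion of that citation (transporting monotonicity, range and $C^1$ regularity of $\hat{y}_1,\hat{y}_2$ through the increasing map $F_\lambda^{-1}$, with the $C^1$ gluing at $c_o$ supplied by \eqref{limdxsx1}--\eqref{limdxsx2} and the one-sided limits at $\hat{c}$ by items $(1)$ and $(4)$ of Proposition \ref{monboundaries}).
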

\begin{proof}
This follows immediately from Propositions \ref{monboundaries} and \ref{existencey2star}, and \eqref{def:gammabeta}.
\end{proof}

We can now show that the parameter-dependent optimal stopping value function $V$ satisfies the following free boundary problem. This will in turn establish some properties of $W^1(x,c)=x\Phi(c) + V(x,c)$ required to verify optimality in Section \ref{verification}.

\begin{prop}
\label{prop:C1}
The value function $V$ of \eqref{def-V} belongs to $C^1(\RR\times[0,\hat{c}))$ with $V_{xx}\in L^\infty_{loc}(\RR\times(0,\hat{c}))$. Moreover $V\le G$ and satisfies
\begin{align}\label{fbp-V00}
\left\{
\begin{array}{ll}
\tfrac{1}{2}V_{xx}(x,c)-\lambda V(x,c)=0 & \text{for $\hat{\gamma}(c)<x<\hat{\beta}(c)$, $c\in[0,\hat{c})$}\\[+4pt]
\tfrac{1}{2}V_{xx}(x,c)-\lambda V(x,c)\ge0 & \text{for a.e.~$(x,c)\in\RR\times[0,\hat{c})$}\\[+4pt]
V(x,c)=G(x,c)& \text{for $x\le \hat{\gamma}(c)$, $x\ge \hat{\beta}(c)$, $c\in[0,\hat{c})$}\\[+4pt]
V_x(x,c)=G_x(x,c)& \text{for $x\le \hat{\gamma}(c)$, $x\ge \hat{\beta}(c)$, $c\in[0,\hat{c})$}\\[+4pt]
V_c(x,c)=G_c(x,c)& \text{for $x\le \hat{\gamma}(c)$, $x\ge \hat{\beta}(c)$, $c\in[0,\hat{c})$}.
\end{array}
\right.
\end{align}
\end{prop}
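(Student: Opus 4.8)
\textbf{Proof strategy for Proposition \ref{prop:C1}.} The plan is to derive all stated properties of $V$ from the explicit geometric description of the minorant $Q$ obtained in Sections \ref{sec:step2}--\ref{clessco} together with the regularity already established in Proposition \ref{prop:Q}. First I would use Proposition \ref{prop:DayKar}, which gives $V(x,c)=\phi_\lambda(x)Q(F_\lambda(x),c)$. Since $F_\lambda$ is a smooth diffeomorphism of $\RR$ onto $(0,\infty)$, $\phi_\lambda\in C^\infty(\RR)$ is strictly positive, and $Q\in C^1((0,\infty)\times[0,\hat c))$ by Proposition \ref{prop:Q}, it follows that $V\in C^1(\RR\times[0,\hat c))$. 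For the second-order regularity I would note that away from the two boundaries $Q(\cdot,c)$ is either an affine function or equals $H(\cdot,c)$; the latter has $H_{yy}\in L^\infty([\delta,\infty)\times[0,\hat c])$ for every $\delta>0$ by Lemma \ref{lem:Hreg}, and on any compact $x$-set $F_\lambda$ stays bounded away from $0$, so $V_{xx}\in L^\infty_{loc}(\RR\times(0,\hat c))$ after accounting for the $\phi_\lambda$ and the Jacobian factors (the jump of $H_{yy}$ at $y=e^{-2}$, i.e.\ at $x=\gamma^o$, is a bounded discontinuity only).

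Next I would identify the three regions in the $x$-variable by applying $F_\lambda^{-1}$ to the decomposition of $Q$ in \eqref{eq:exprQ}, which gives exactly the three cases in \eqref{VDayKar-bis}. The inequality $V\le G$ is immediate: $Q$ is a \emph{minorant} of $H$, so $Q(F_\lambda(x),c)\le H(F_\lambda(x),c)$, and multiplying by $\phi_\lambda(x)>0$ and recalling the definition \eqref{def-H} of $H$ yields $V(x,c)\le G(x,c)$. For the equations in \eqref{fbp-V00}: on $(\hat\gamma(c),\hat\beta(c))$ the function $Q(\cdot,c)$ is affine, and for any constants $a,b$ the function $x\mapsto\phi_\lambda(x)\big(aF_\lambda(x)+b\big)=a\psi_\lambda(x)+b\phi_\lambda(x)$ is a linear combination of the two fundamental solutions of $(\mathbb L_X-\lambda)u=0$, hence solves $\tfrac12 V_{xx}-\lambda V=0$ there. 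On the complementary set $V=G$ with $G$ given explicitly in \eqref{def-G2}; a direct check (already performed in the proof of Proposition \ref{prop:solWo} via \eqref{eq:BM-subharm} for $x\le\gamma^o$, and using $\phi_\lambda$-harmonicity of $W^o$ for $x>\gamma^o$) shows $\tfrac12 G_{xx}-\lambda G\ge0$ on the whole line, with equality failing (in the right direction) only because of the linear term $x(\hat c-c-\Phi(c))$ etc.; this gives the second line of \eqref{fbp-V00}.

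The third, fourth and fifth lines of \eqref{fbp-V00} are the $C^1$-pasting of $V$ with $G$ across the boundaries, which I expect to be the main point requiring care. The value-matching (line three) is the statement that $\cS_c^Q=\{Q=H\}$, which holds by construction of the minorant. The $x$-derivative matching (line four) follows because $Q\in C^1$ (Proposition \ref{prop:Q}) and $Q=H$ on the stopping set forces $Q_y=H_y$ there, whence $V_x=\partial_x\big(\phi_\lambda(x)Q(F_\lambda(x),c)\big)=\partial_x\big(\phi_\lambda(x)H(F_\lambda(x),c)\big)=G_x$ on $\{x\le\hat\gamma(c)\}\cup\{x\ge\hat\beta(c)\}$. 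The $c$-derivative matching (line five) is the delicate one: it is precisely the content of the identities $H_c(\hat y_i(c),c)=A_c(\hat y_i(c),c)$ established inside the proof of Proposition \ref{prop:Q}, which say that differentiating the relation $V=G$ tangentially to the free boundary and using $V_x=G_x$ there leaves $V_c=G_c$. Here I would simply invoke Proposition \ref{prop:Q}: since $Q$ (hence $V$, via the fixed smooth change of variables) is globally $C^1$, and $V=G$ on an open subset of the stopping region whose closure contains the boundary, continuity of $V_c$ and $G_c$ up to the boundary gives $V_c=G_c$ there. A small separate remark is needed for the degenerate boundary $\hat\gamma(c)=-\infty$ when $c\in[0,c_o]$, where the region $\{x\le\hat\gamma(c)\}$ is empty and there is nothing to check; and at $c=c_o$ itself one uses the limit relations \eqref{limdxsx1}--\eqref{limdxsx2} of Proposition \ref{existencey2star} to see that no loss of regularity occurs in passing from one regime to the other. \cvd
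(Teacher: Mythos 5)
Your proposal is correct and follows essentially the same route as the paper's proof: regularity of $V$ is pulled back from $Q$ via Proposition \ref{prop:DayKar}, $V\le G$ from $Q\le H$, the ODE on the continuation set from the affineness of $Q$ on $(\hat y_1(c),\hat y_2(c))$, and the three matching conditions from the global $C^1$ regularity of $Q$ established in Proposition \ref{prop:Q}. The only slip is the parenthetical claim that $\tfrac12 G_{xx}-\lambda G\ge0$ on the whole line: for $x>\gamma^o$ one computes $\tfrac12 G_{xx}-\lambda G=\lambda x\big(R(\hat c)-R(c)\big)$, which is negative on $(\gamma^o,0)$, so the inequality is only needed, and only holds, on the stopping set $(-\infty,\hat\gamma(c)]\cup[\hat\beta(c),\infty)$, where the signs of $x$ and of $R$ work out exactly as you indicate (this does not affect your conclusion, since $(\gamma^o,0)$ always lies in the continuation region where the first line of \eqref{fbp-V00} applies).
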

\begin{proof}
From Proposition \ref{prop:DayKar} we have that $Q\in C^1((0,\infty)\times [0, \hat c))$ implies $V\in C^1(\RR\times[0,\hat{c}))$. Analogously to prove that $V_{xx}$ is locally bounded it suffices to show it for $Q_{yy}$. Since $Q_{yy}=H_{yy}$ for $x\le \hat{\gamma}(c)$, $x\ge \hat{\beta}(c)$, $c\in[0,\hat{c})$ and $Q$ is linear in $y$ elsewhere the claim follows.

By construction $Q\le H$ and therefore $V\le G$. From \eqref{VDayKar-bis} we see that inside the continuation region $V$ may be rewritten as $V(x,c)=A(c)\psi_\lambda(x)+B(c)\phi_\lambda(x)$, with suitable $A(c)$ and $B(c)$, and therefore the first equation of \eqref{fbp-V00} holds. Outside the continuation region one has $V=G$ so that $\tfrac{1}{2}V_{xx}-\lambda V$ can be computed explicitly by recalling the expression for $W^o$ (see \eqref{Wo}) and it may be verified that the subsequent inequality
holds (using that $k(c)<0$ since $c<\hat{c}$ and $R(c)>0$ for $c>c_o$). The last three equalities in \eqref{fbp-V00} follow since $V\in C^1(\RR\times[0,\hat{c}))$.
\end{proof}

\begin{coroll}
\label{corW1}
$W^1\in C^1(\RR\times[0,\hat{c}))$, with $W^1_{xx}\in L^\infty_{loc}(\RR\times(0,\hat{c}))$ and in particular we have
\begin{align}
\label{eq:W1C1}W^1_c(x,c) = -x \quad \text{and}\quad W^1_x(x,c) &= \hat{c}-c+W^o_x(x,\hat{c})
\end{align}
for $x\in(-\infty,\hat{\gamma}(c)]\cup[\hat{\beta}(c),+\infty)$ and $c \in [0,\hat{c})$.
\end{coroll}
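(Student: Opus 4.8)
The plan is to derive everything from the identity $W^1(x,c)=x\Phi(c)+V(x,c)$ together with the regularity and the free boundary problem for $V$ already established in Proposition~\ref{prop:C1}. Since $(x,c)\mapsto x\Phi(c)$ is smooth on $\RR\times[0,\hat c)$ (indeed $\Phi\in C^2$ by Assumption~\ref{ass-Phi}), the claim $W^1\in C^1(\RR\times[0,\hat c))$ is immediate from $V\in C^1(\RR\times[0,\hat c))$, and likewise $W^1_{xx}=\Phi(c)\cdot 0+V_{xx}=V_{xx}\in L^\infty_{loc}(\RR\times(0,\hat c))$ since $x\Phi(c)$ is linear in $x$ and hence contributes nothing to the second $x$-derivative. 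So the only real content is the pair of formulas in \eqref{eq:W1C1}, valid on the stopping region $x\in(-\infty,\hat\gamma(c)]\cup[\hat\beta(c),+\infty)$.

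First I would record the elementary derivatives of $G$. From \eqref{def-G}, $G(x,c)=x(\hat c-c-\Phi(c))+W^o(x,\hat c)$, so $G_c(x,c)=x(-1-\Phi'(c))=-x\,k(c)/\lambda$ and $G_x(x,c)=\hat c-c-\Phi(c)+W^o_x(x,\hat c)$. Next, on the stopping region Proposition~\ref{prop:C1} gives $V=G$, $V_x=G_x$ and $V_c=G_c$ there. Therefore, for $x\in(-\infty,\hat\gamma(c)]\cup[\hat\beta(c),+\infty)$ and $c\in[0,\hat c)$,
\begin{align*}
W^1_c(x,c)&=\tfrac{\partial}{\partial c}\big(x\Phi(c)\big)+V_c(x,c)=x\Phi'(c)+G_c(x,c)=x\Phi'(c)-x\big(1+\Phi'(c)\big)=-x,\\
W^1_x(x,c)&=\Phi(c)+V_x(x,c)=\Phi(c)+G_x(x,c)=\Phi(c)+\hat c-c-\Phi(c)+W^o_x(x,\hat c)=\hat c-c+W^o_x(x,\hat c),
\end{align*}
which are exactly the two identities in \eqref{eq:W1C1}. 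One small check worth making explicit is that these one-sided statements on the closed region $x\in(-\infty,\hat\gamma(c)]$ and $x\in[\hat\beta(c),+\infty)$ are genuine two-sided derivatives at the boundary points $x=\hat\gamma(c)$ and $x=\hat\beta(c)$: this is guaranteed by the global $C^1$ regularity of $V$ (hence of $W^1$) on $\RR\times[0,\hat c)$, so no separate one-sided limit argument is needed.

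There is essentially no obstacle here; the result is a bookkeeping corollary. The only point requiring a moment's care is the interpretation of ``$W^1_c$ and $W^1_x$ on the stopping region'' as being consistent with the $C^1$ regularity on all of $\RR\times[0,\hat c)$ — that is, there is a single $C^1$ function whose restriction to the stopping region is given by the displayed formulas — and this follows directly from $W^1=x\Phi(c)+V$ with $V\in C^1$. I would therefore present the proof in two lines: invoke $W^1=x\Phi(c)+V$ and the smoothness of $x\Phi(c)$ for the regularity claim, then substitute $V=G$, $V_x=G_x$, $V_c=G_c$ on the stopping region (Proposition~\ref{prop:C1}) and compute $G_c$, $G_x$ from \eqref{def-G} together with the definition \eqref{def-k} of $k$ to obtain \eqref{eq:W1C1}.
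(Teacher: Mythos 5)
Your proof is correct and is exactly the argument the paper intends: the corollary is stated without a written proof precisely because it follows immediately from the decomposition $W^1(x,c)=x\Phi(c)+V(x,c)$ in \eqref{Wstar2} together with the regularity and the identities $V=G$, $V_x=G_x$, $V_c=G_c$ on the stopping region from Proposition \ref{prop:C1}, plus the elementary computation of $G_x$ and $G_c$ from \eqref{def-G}. Your explicit bookkeeping (including the remark that the $C^1$ regularity of $V$ makes the boundary values genuine two-sided derivatives) matches the paper's route.
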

{The next two propositions follow from results collected above and their detailed proofs are given in Appendix \ref{someproofs}.}
\begin{prop}
\label{prop-Wcgeqx}
$W^1_c(x,c) \geq -x$ for all $(x,c) \in \RR\times[0,\hat{c})$.
\end{prop}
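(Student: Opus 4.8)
The plan is to establish $W^1_c(x,c)\ge -x$ by treating the three $x$-ranges determined by the free boundaries $\hat\gamma(c)$ and $\hat\beta(c)$ separately, using $W^1=x\Phi(c)+V$ so that the claim is equivalent to $V_c(x,c)\ge -x-x\Phi'(c)=-\tfrac{x}{\lambda}k(c)$. For $x\le\hat\gamma(c)$ or $x\ge\hat\beta(c)$ we are in the stopping region, where $V=G$ and hence $V_c=G_c$; from the explicit form \eqref{def-G} one has $G_c(x,c)=x(\,-1-\Phi'(c)\,)=-\tfrac{x}{\lambda}k(c)$, so $W^1_c(x,c)=-x$ exactly there (this is just Corollary \ref{corW1}), and the inequality holds with equality. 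The real work is the continuation region $\hat\gamma(c)<x<\hat\beta(c)$.

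In the continuation region I would differentiate the representation of $V$ in \eqref{VDayKar-bis} with respect to $c$. Using the tangency relations from Proposition \ref{existencegeometric} (equivalently the identity \eqref{simple} and the fact that $Q_y=A_y=H_y$ at both boundaries, established in the proof of Proposition \ref{prop:Q}), the derivative $\hat\beta'(c)$ terms cancel, leaving $V_c(x,c)=\phi_\lambda(x)\big[H_{yc}(F_\lambda(\hat\beta(c)),c)\big(F_\lambda(x)-F_\lambda(\hat\beta(c))\big)+H_c(F_\lambda(\hat\beta(c)),c)\big]$, and by Remark \ref{rem:day} the same expression holds with $\hat\gamma$ in place of $\hat\beta$ whenever $c>c_o$. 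Plugging in the explicit formulas \eqref{Hc}--\eqref{Hyc} for $H_c$ and $H_{yc}$ in terms of $R'(c)=-k(c)/\lambda$, and writing $y=F_\lambda(x)$, $y_2=F_\lambda(\hat\beta(c))$, $y_1=F_\lambda(\hat\gamma(c))$, this reduces the inequality $V_c(x,c)\ge -\tfrac{x}{\lambda}k(c)$ (recall $k(c)<0$ on $[0,\hat c)$) to a one-variable inequality in $y$ on the interval $(y_1,y_2)$, with $R'(c)$ factoring out.

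The key step — and the main obstacle — is verifying that reduced inequality. After dividing through by the (positive) quantity $\tfrac{1}{2\sqrt{2\lambda}}|R'(c)|\phi_\lambda(x)=\tfrac{1}{2\sqrt{2\lambda}}|R'(c)|y^{-1/2}$, the claim becomes something of the shape $(1+\tfrac12\ln y_*)\,(y-y_*) + y_*(\tfrac12\ln y_* )\le -y\ln y$ or an equivalent affine-versus-concave comparison, where $y_*\in\{y_1,y_2\}$; this says precisely that the affine function $z\mapsto H_c(z,c)$-related tangent line lies above (or below, with the correct sign) the graph of a concave/convex function on the relevant interval. I would make this rigorous by noting that $z\mapsto z^{1/2}\ln z$ is the shape governing $H$ and exploiting the convexity/concavity structure already recorded in Lemma \ref{lem:H} (convex on $(0,e^{-2})\cup(1,\infty)$, concave on $[e^{-2},1]$) together with the fact that $(y_1,y_2)$ is exactly the bitangent interval: the tangent line at $\hat y_1$ (or $\hat y_2$) to the $H_c$-type profile dominates it across $(\hat y_1,\hat y_2)$ by concavity of the relevant primitive, which is what forces the sign. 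A clean alternative is to observe that $V_c(x,c)+x+x\Phi'(c)$ vanishes at $x=\hat\gamma(c)$ and $x=\hat\beta(c)$ (by $C^1$-pasting, Proposition \ref{prop:C1}) and satisfies a second-order ODE in $x$ on the continuation region whose sign can be controlled, so a maximum-principle argument on $(\hat\gamma(c),\hat\beta(c))$ yields the inequality; I expect the geometric bitangent argument to be the shortest and I would present that, deferring the more computational ODE route only if the tangency bookkeeping becomes unwieldy.
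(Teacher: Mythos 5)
Your reduction to showing $V_c\ge G_c$ on the continuation region, and the boundary cases via $C^1$ pasting, match the paper. But your primary route contains a genuine error, and your ``clean alternative'' is closer to the paper but is understated in its difficulty.

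The error: differentiating \eqref{VDayKar-bis} with respect to $c$ and claiming ``the $\hat\beta'(c)$ terms cancel'' is wrong. Writing $\hat y_2=F_\lambda(\hat\beta(c))$, the full derivative is
\[
Q_c(y,c)=\big[H_{yy}(\hat y_2,c)\,\hat y_2'(c)+H_{yc}(\hat y_2,c)\big](y-\hat y_2)+H_c(\hat y_2,c).
\]
The tangency/smooth-pasting identities only cancel the $H_y(\hat y_2,c)\,\hat y_2'(c)$ cross terms; the second-order term $H_{yy}(\hat y_2,c)\,\hat y_2'(c)(y-\hat y_2)$ survives, and for $c<\hat c$ one has $H_{yy}(\hat y_2,c)>0$ and $\hat y_2'(c)<0$, so it is not zero. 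Proposition \ref{prop:Q} (your cited \eqref{simple}, \eqref{simple2}) gives only the \emph{boundary} equality $H_c(\hat y_i(c),c)=A_c(\hat y_i(c),c)$, i.e.\ the $C^1$ pasting of $Q_c$ at the two boundaries; it does not justify dropping the $H_{yy}\hat y_2'$ term in the interior. Your simplified expression for $V_c$ is therefore incorrect.

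Even if the expression were repaired, the ``affine-versus-concave'' comparison does not close the argument directly: $Q_c(\cdot,c)$ is affine and agrees with $H_c(\cdot,c)=\tfrac{R'(c)}{2\sqrt{2\lambda}}\sqrt{y}\ln y$ at both endpoints $\hat y_1<e^{-2}$ and $\hat y_2>1$, but $y\mapsto\sqrt{y}\ln y$ changes convexity at $y=1$, which lies strictly between them, so no single-sided secant/tangent argument applies. The paper copes with exactly this inflection: it sets $u:=V_c-G_c$, uses the inhomogeneous ODE $(\tfrac12\partial_{xx}-\lambda)u=-\lambda x(1+\Phi'(c))$ with zero boundary data, obtains the Feynman--Kac formula $u=(1+\Phi'(c))\,\lambda\,\EE[\int_0^{\tau_{\gamma,\beta}}e^{-\lambda t}X^x_t\,dt]$, reduces via Dynkin and explicit Laplace transforms to the $\sinh$ expression $\vartheta$ in \eqref{def-vartheta}, and then controls the sign of $\vartheta$ through $\vartheta_x,\vartheta_{xx},\vartheta_{xxx}$ and system \eqref{system-y1y2} (the key nontrivial step being $\vartheta_x(\hat\beta(c),c)>0$, proved using \eqref{fromsystem}). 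Your ``maximum-principle'' alternative is pointed in the right direction, but the inhomogeneity $-\lambda x(1+\Phi'(c))$ changes sign with $x$, so a naive maximum principle will not give the inequality; the explicit $\vartheta$ analysis (or some equivalent sign-splitting argument) is what actually carries the proof.
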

\begin{prop}
\label{prop:C1past}
Let
\begin{align}
\label{def:W}
W(x,c):=\left\{
\begin{array}{ll}
W^1(x,c), & \text{for $(x,c)\in\RR\times[0,\hat{c})$}\\[+4pt]
W^o(x,c), & \text{for $(x,c)\in\RR\times[\hat{c},1]$,}
\end{array}
\right.
\end{align}
then $W\in C^1(\RR\times[0,1])$ and $W_{xx}\in L^\infty_{loc}(\RR\times[0,1])$.
\end{prop}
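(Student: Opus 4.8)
\textbf{Proof plan for Proposition \ref{prop:C1past}.}

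The plan is to verify the claimed regularity of $W$ by checking that the two pieces $W^1$ (on $\RR\times[0,\hat c)$) and $W^o$ (on $\RR\times[\hat c,1]$) agree up to first order along the seam $c=\hat c$, and that each piece already has the required interior regularity. The interior regularity on each open half is already available: $W^o\in C^1(\RR\times[\hat c,1])$ with $W^o_{xx}\in L^\infty_{loc}$ by Proposition \ref{prop:solWo}, and $W^1\in C^1(\RR\times[0,\hat c))$ with $W^1_{xx}\in L^\infty_{loc}(\RR\times(0,\hat c))$ by Corollary \ref{corW1}. So the whole content of the proposition is the $C^1$-matching (and local boundedness of $W_{xx}$) across $c=\hat c$.

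First I would compute the one-sided limits of $W^1$ and its first derivatives as $c\uparrow\hat c$ and compare them with the values of $W^o$ and its derivatives at $c=\hat c$. For the value itself, note that as $c\uparrow\hat c$ the obstacle data in \eqref{Wstar} degenerate: the term $X^x_\tau(\hat c-c)\to0$ and $\Phi(c)\to\Phi(\hat c)$, so $W^1(x,c)\to \inf_\tau\EE[\int_0^\tau e^{-\lambda t}\lambda X^x_t\Phi(\hat c)\,dt+e^{-\lambda\tau}W^o(X^x_\tau,\hat c)]$; one then checks (using that $W^o(\cdot,\hat c)$ already solves the variational inequality \eqref{fbp-cbiggerhatc} at $c=\hat c$) that the stopping time $\tau\equiv0$ is optimal, so the limit equals $W^o(x,\hat c)$ — giving continuity. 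A cleaner route, which I would actually use, is to pass to the transformed picture: by Proposition \ref{monboundaries}(1) and \eqref{limdxsx1} the boundaries satisfy $\hat y_1(\hat c-)=e^{-2}$ and $\hat y_2(\hat c-)=\lim_{c\uparrow\hat c}\hat y_2(c)$, and one verifies from \eqref{def-H2} that at $c=\hat c$ (where $R(c)-R(\hat c)=0$) the obstacle $H(\cdot,\hat c)$ becomes the constant $-\tfrac{1}{\sqrt{2\lambda}}e^{-1}R(\hat c)$ on $(e^{-2},\infty)$, so its convex minorant $Q(\cdot,\hat c)$ coincides with $H(\cdot,\hat c)$ there; transporting this back via $V=\phi_\lambda Q\circ F_\lambda$ and $W^1=x\Phi+V$ yields $W^1(x,\hat c-)=W^o(x,\hat c)$.

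Next, for the derivatives, I would use the explicit formulas already in hand: $W^1_c(x,c)=-x$ on the stopping set (Corollary \ref{corW1}) and, inside the continuation region, $W^1_c=x\Phi'(c)+V_c$ with $V_c$ controlled through $Q_c=H_c(\hat y_2(c),c)$ by the envelope/chain-rule identity established in the proof of Proposition \ref{prop:Q}. As $c\uparrow\hat c$ the continuation region $(\hat\gamma(c),\hat\beta(c))$ in the $x$-variable does not collapse (indeed $\hat\beta(\hat c-)\in(0,1/\sqrt{2\lambda})$ and $\hat\gamma(\hat c-)\le-1/\sqrt{2\lambda}$ by Corollary \ref{cor:regBdr}), so I must instead show that the limiting function there coincides with $W^o(\cdot,\hat c)$ and its $c$-derivative with $W^o_c(\cdot,\hat c)$; this is exactly where the differential link of Remark \ref{rem:connection}.2 is convenient, since $W^o_c$ solves the OS free boundary problem \eqref{eq:OSw} with parameter $\hat c$, and the limit of $W^1_c$ solves the same problem. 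For $W^1_x$ I would argue similarly, or simply differentiate the explicit expression \eqref{VDayKar-bis} and pass to the limit using $\hat y_2\in C^1$ up to $\hat c$ (on the $[0,c_o]$ side via Proposition \ref{existencey2star}) together with Proposition \ref{monboundaries}(4), which gives $\hat y_1'(\hat c-)=0$ and hence no derivative blow-up from the left boundary. Finally, local boundedness of $W_{xx}$ across $c=\hat c$ follows because on each side $W_{xx}$ equals $\lambda$ times a continuous function plus (on the stopping sets) an affine term, and these match in value at the seam, so $W_{xx}$ is bounded on compacts of $\RR\times[0,1]$ even though it need not be continuous across the free boundaries.

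The main obstacle is the matching of the first $c$-derivative along $c=\hat c$ inside the continuation region: because the region does not shrink to a point there, one cannot simply read off the seam value from the stopping-region formula $W^1_c=-x$, and one must instead identify the $c\uparrow\hat c$ limit of the continuation-region expression with $W^o_c(\cdot,\hat c)$. I expect to handle this by the optimal-stopping characterisation of $W^o_c$ from Remark \ref{rem:connection}.2 together with continuity in the parameter $c$ of the value and boundaries of the family \eqref{def-V}–\eqref{def-H2}, rather than by a direct (and messier) computation with $\phi_\lambda$, $F_\lambda$ and the envelope formula for $Q_c$. \cvd
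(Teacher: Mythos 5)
Your overall structure — interior regularity of each piece is already established, and the whole content of the proposition is the $C^1$-matching along the seam $c=\hat{c}$ — is exactly the paper's. Your argument for matching the value itself is also essentially the paper's: since $R(\hat{c})-R(\hat{c})=0$, the transformed obstacle $H(\cdot,\hat{c})$ is constant equal to $-\tfrac{1}{\sqrt{2\lambda}}e^{-1}R(\hat{c})$ on $(e^{-2},\infty)$ and convex decreasing on $(0,e^{-2})$, so it is its own convex minorant, and transporting back via $V=\phi_\lambda\,Q\circ F_\lambda$ recovers $W^o(\cdot,\hat{c})$. This is what the paper computes via Remark \ref{rem:day} and \eqref{Qchat}.

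The gap is in your treatment of $W^1_c$ on the limiting continuation region, which you yourself identify as the main obstacle. First, the formula $Q_c=H_c(\hat{y}_2(c),c)$ is not the envelope identity: differentiating $Q(y,c)=H_y(\hat{y}_2(c),c)(y-\hat{y}_2(c))+H(\hat{y}_2(c),c)$ gives $Q_c=\big[H_{yc}(\hat{y}_2,c)+H_{yy}(\hat{y}_2,c)\hat{y}_2'(c)\big](y-\hat{y}_2)+H_c(\hat{y}_2,c)$, and the bracket does not vanish in general; the terms proportional to $\hat{y}_2'$ cancel in $Q$, but not in $Q_c$. Second, and more importantly, your proposed mechanism — ``$W^o_c$ solves the OS free boundary problem \eqref{eq:OSw} at $\hat{c}$, and the limit of $W^1_c$ solves the same problem'' — is unsubstantiated. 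For $c<\hat{c}$ the function $W^1_c$ does \emph{not} solve a free boundary problem of the type \eqref{eq:OSw} (the paper's introduction emphasises precisely that the differential connection to OS breaks down below $\hat{c}$), so there is no parametrised OS characterisation of $W^1_c$ in which to pass to the limit; at best one could argue via uniqueness of the Dirichlet problem for $\tfrac12\partial_x^2-\lambda$ on $(\hat{\gamma}(\hat{c}),\hat{\beta}(\hat{c}-))$ with data $-x$ at the endpoints, but that step is not in your plan. The ``direct computation'' you dismiss as messier is in fact the clean route and the one the paper uses: write $Q$ in terms of the left boundary $\hat{y}_1$ (Remark \ref{rem:day}), differentiate in $c$, and observe that all three contributions vanish in the limit $c\uparrow\hat{c}$ — $H_{yc}(e^{-2},\hat{c})=0$ because the factor $1+\tfrac12\ln y$ vanishes at $e^{-2}$, $H_c(e^{-2},\hat{c})=0$ because $R'(\hat{c})=0$, and the term $H_{yy}(\hat{y}_1,c)\hat{y}_1'(c)$ is killed by $\hat{y}_1'(\hat{c}-)=0$ (Proposition \ref{monboundaries}(4)). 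You invoke exactly this last fact for $W^1_x$, but it is precisely what you need for $W^1_c$ as well, so you should apply it there rather than reaching for an OS characterisation that is not available.
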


In the next definition we extend the boundaries $\hat{\beta}$ and $\hat{\gamma}$ to the whole of $[0,1]$. With this extension they correspond to the boundaries introduced in the statement of Theorem \ref{thm:main-nu}.

\begin{definition}
The function $\hat{\gamma}$ can be extended to $[\hat{c},1]$ by putting $\hat{\gamma}(c)=\gamma^o$, $c\in[\hat{c},1]$ (by $(1)$\ and $(4)$\ of Proposition \ref{monboundaries} and \eqref{def:gammabeta}). This extension is $C^1$ on $(c_o,1]$ and will be assumed in the rest of the paper. We also set $\hat{\beta}(c)=+\infty$ for $c\in[\hat{c},1]$.
\end{definition}


\subsection{Step 6: verification theorem and the optimal control}
\label{verification}

In this section we establish the optimality of the candidate value function $W$ and show that the purely discontinuous control $\nu^*$ defined in \eqref{op-contr01} of Theorem \ref{thm:main-nu} is indeed optimal for problem \eqref{valuefunction}. Firstly, several results obtained above are summarised in the following proposition.

\begin{prop}
\label{prop:VI}
The function $W$ of \eqref{def:W} solves the variational problem \eqref{eq:HJB}.
Moreover, $|W(x,c)| \leq K(1 + |x|)$ or some $K>0$ and $W(x,1)=U(x,1)=0$.
\end{prop}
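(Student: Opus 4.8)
\textbf{Proof proposal for Proposition \ref{prop:VI}.}

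The plan is to verify the two inequalities defining the variational problem \eqref{eq:HJB} separately on the two regions $\RR\times[0,\hat c)$ and $\RR\times[\hat c,1]$, then combine them using the $C^1$-pasting across $c=\hat c$ already established in Proposition \ref{prop:C1past}. On $\RR\times[\hat c,1]$ the function $W$ coincides with $W^o$, so the first inequality $(-\tfrac12 w_{xx}+\lambda w)-\lambda x\Phi(c)\le0$ holds as an equality for $x>\gamma^o$ and as the inequality \eqref{eq:BM-subharm} for $x\le\gamma^o$ (both from Proposition \ref{prop:solWo}), while the gradient constraint $-w_c-x\le0$ is exactly $W^o_c(x,c)\ge-x$, verified in the proof of Proposition \ref{prop:solWo}; moreover one of the two terms vanishes everywhere, so the $\max$ equals $0$. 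On $\RR\times[0,\hat c)$ we have $W=W^1=x\Phi(c)+V(x,c)$. Using $W^1_c(x,c)=\Phi'(c)x + W^1_c$-type manipulations, the gradient term $-W^1_c(x,c)-x$ is nonpositive by Proposition \ref{prop-Wcgeqx}. For the other term, a direct computation gives $-\tfrac12 W^1_{xx}+\lambda W^1-\lambda x\Phi(c)=-\tfrac12 V_{xx}+\lambda V$ (since $x\Phi(c)$ is $\tfrac12\partial_{xx}-\lambda$-harmonic up to the $-\lambda x\Phi(c)$ term, precisely cancelling it), and this equals $0$ in the continuation region $\hat\gamma(c)<x<\hat\beta(c)$ and is $\le0$ outside it, both from the free boundary problem \eqref{fbp-V00} for $V$ in Proposition \ref{prop:C1}. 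Again, in each of the two regions (inside/outside the continuation region) at least one of the two terms in the $\max$ is zero, so the $\max$ is identically $0$ on $\RR\times[0,\hat c)$.

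It then remains to check that these two verifications glue correctly at $c=\hat c$: since $W\in C^1(\RR\times[0,1])$ with $W_{xx}\in L^\infty_{loc}$ by Proposition \ref{prop:C1past}, the pointwise-a.e.\ identity $\max\{\cdots\}=0$ extends to the hyperplane $\{c=\hat c\}$ (a null set, so it does not affect the ``a.e.'' statement, but the $C^1$ regularity ensures there is no pathology in the essential-supremum sense). For the growth bound, $|W^o(x,c)|\le K(1+|x|)$ follows directly from the explicit formula \eqref{Wo} since $\phi_\lambda$ is bounded on $[\gamma^o,\infty)$ and $R$, $\Phi$ are bounded on $[0,1]$; then $|W^1(x,c)|=|x\Phi(c)+V(x,c)|\le K(1+|x|)$ using $|V(x,c)|=|\inf_\tau\EE[e^{-\lambda\tau}G(X^x_\tau,c)]|\le \sup_x|G(x,c)|/\ldots$ — more simply, $V\le G$ and $V\ge \inf$ over the trivial stopping time gives $V\ge$ a linear lower bound, combined with $|G(x,c)|\le C(1+|x|)$ from the remark after \eqref{def-G}. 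Finally $W(x,1)=W^o(x,1)=0$ from \eqref{Wo} and the definition of $R$, and $U(x,1)=0$ is immediate from \eqref{valuefunction} since the only admissible control when $c=1$ is $\nu\equiv0$, giving $\mathcal J_{x,1}(0)=x\Phi(1)=0$.

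The main obstacle is the sign verification of $\tfrac12 V_{xx}-\lambda V\ge0$ (equivalently $-\tfrac12 W^1_{xx}+\lambda W^1-\lambda x\Phi(c)\le0$) in the stopping region $x\le\hat\gamma(c)$ or $x\ge\hat\beta(c)$; this is precisely the step flagged as verified (``using that $k(c)<0$ since $c<\hat c$ and $R(c)>0$ for $c>c_o$'') in the proof of Proposition \ref{prop:C1}, so I would invoke that proposition directly rather than redo the case analysis on the sign of $R(c)$ and the position of $x$ relative to $\gamma^o$. A secondary subtlety is bookkeeping the fact that in \eqref{eq:HJB} the $\max$ being zero requires not just both terms $\le0$ but at least one term $=0$ at a.e.\ point; this is automatic here because in the inaction region the first term vanishes while in the action region the gradient constraint is saturated by construction of the optimal control, but it is worth stating explicitly. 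Modulo these points the argument is an assembly of the already-established Propositions \ref{prop:solWo}, \ref{prop:C1}, \ref{prop-Wcgeqx}, \ref{prop:C1past} and is otherwise routine.
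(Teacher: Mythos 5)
Your proposal is correct and follows essentially the same route as the paper, which likewise assembles Propositions \ref{prop:solWo}, \ref{prop:C1}, \ref{prop-Wcgeqx}, Corollary \ref{corW1} and the $C^1$-pasting of Proposition \ref{prop:C1past}, and disposes of the growth bound and the boundary condition $W(x,1)=U(x,1)=0$ directly from the explicit formulae. Your extra observations (the cancellation $-\tfrac12 W^1_{xx}+\lambda W^1-\lambda x\Phi(c)=-\tfrac12V_{xx}+\lambda V$ and the saturation of one term of the $\max$ in each region) are correct elaborations of what the paper leaves implicit.
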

\begin{proof}
The functions $W^o$ and $W^1$ solve the variational problem on $\RR\times[\hat{c},1)$ and $\RR\times(0,\hat{c})$ respectively (see Proposition \ref{prop:solWo} for the claim regarding $W^o$ and Propositions \ref{prop:C1}, \ref{prop-Wcgeqx} and Corollary \ref{corW1} for the claim regarding $W^1$). Then Proposition \ref{prop:C1past} guarantees that $W$ solves the variational problem on $\RR\times(0,1)$ as required.
From the definitions of $W$, $W^o$ and $W^1$ (see \eqref{def:W}, \eqref{Wo} and \eqref{Wstar}) one also obtains the sublinear growth property and $W(x,1)=0$.
\end{proof}

\begin{theorem}
\label{thm-opt-c}
The admissible control $\nu^*$ of \eqref{op-contr01} is optimal for problem \eqref{valuefunction} and $W\equiv U$.
\end{theorem}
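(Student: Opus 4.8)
The plan is to run a standard verification argument, showing on the one hand that $W\le U$ (i.e.\ $W$ is a lower bound for the cost of every admissible control), and on the other that the specific control $\nu^*$ of \eqref{op-contr01} attains $W$, so that $U\le \mathcal{J}_{x,c}(\nu^*)=W(x,c)\le U(x,c)$ and both inequalities are equalities. Throughout I would exploit Proposition \ref{prop:VI}, which already tells us that $W\in C^1(\mathcal{O})$ with $W_{xx}\in L^\infty_{loc}(\mathcal{O})$, solves the variational inequality \eqref{eq:HJB}, has sublinear growth, and satisfies $W(x,1)=0$.

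\emph{Lower bound.} Fix $(x,c)\in\RR\times[0,1]$ and an arbitrary $\nu\in\mathcal{A}_c$. Since $W$ is only $C^1$ with locally bounded (but possibly discontinuous) second $x$-derivative, I would first mollify $W$ in the $x$-variable, or equivalently invoke a generalised It\^o / It\^o--Tanaka formula for such functions, applied to $e^{-\lambda t}W(X^x_t,C^{c,\nu}_t)$. Writing $\nu=\nu^c+\sum\Delta\nu$ for its continuous and jump parts and using $dC^{c,\nu}_t=d\nu_t$, the change-of-variables formula produces: a drift term $\big(\tfrac12 W_{xx}-\lambda W\big)(X^x_t,C^{c,\nu}_t)\,dt$, which by \eqref{eq:HJB} is $\ge -\lambda X^x_t\Phi(C^{c,\nu}_t)\,dt$; a term $W_c(X^x_t,C^{c,\nu}_t)\,d\nu^c_t$, which by \eqref{eq:HJB} is $\ge -X^x_t\,d\nu^c_t$; a local-martingale term $W_x(X^x_t,C^{c,\nu}_t)\,dB_t$; and, for each jump, an increment $W(X^x_t,C^{c,\nu}_{t+})-W(X^x_t,C^{c,\nu}_t)=\int_{C^{c,\nu}_t}^{C^{c,\nu}_{t+}}W_c(X^x_t,y)\,dy\ge -X^x_t\,\Delta\nu_t$, again by $W_c\ge -x$. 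Collecting these, taking expectations (the stochastic integral is a true martingale or at least killed by the sublinear growth of $W$ together with $\nu\le 1$ and standard Brownian estimates, exactly as in the proof of Proposition \ref{sublineargrowth}), and letting $t\to\infty$ using $e^{-\lambda t}W(X^x_t,C^{c,\nu}_t)\to 0$ in $L^1$, I obtain
\[
W(x,c)\ \le\ \EE\bigg[\int_0^\infty e^{-\lambda s}\lambda X^x_s\Phi(C^{c,\nu}_s)\,ds+\int_0^\infty e^{-\lambda s}X^x_s\,d\nu_s\bigg]=\mathcal{J}_{x,c}(\nu).
\]
Taking the infimum over $\nu\in\mathcal{A}_c$ gives $W\le U$.

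\emph{Optimality of $\nu^*$.} Now I run the same computation with $\nu=\nu^*$ and check that every inequality above becomes an equality. Off the boundaries, in the inaction region, $\tfrac12 W_{xx}-\lambda W=-\lambda x\Phi(c)$ holds with equality (first line of \eqref{fbp-cbiggerhatc} and first line of \eqref{fbp-V00}, transported through $W^1=x\Phi+V$), so the $dt$-term contributes exactly $-\lambda X^x_s\Phi(C^{c,\nu^*}_s)$ there; since $\nu^*$ is purely discontinuous, $\nu^{*,c}\equiv 0$ and the continuous-control term is absent; and at each of the (at most two) jump times of $\nu^*$ the process is on a boundary along which $W_c(\cdot,y)=-\cdot$ holds on the whole jumped interval — this is precisely the content of Corollary \ref{corW1}, equation \eqref{eq:W1C1}, which gives $W^1_c(x,c)=-x$ for $x\in(-\infty,\hat\gamma(c)]\cup[\hat\beta(c),\infty)$, together with the analogous identity for $W^o$ at $\gamma^o$ — so $W(X^x_s,C^{c,\nu^*}_{s+})-W(X^x_s,C^{c,\nu^*}_s)=-X^x_s\Delta\nu^*_s$ exactly. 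Hence equality holds throughout and $W(x,c)=\mathcal{J}_{x,c}(\nu^*)$. Combined with $W\le U\le\mathcal{J}_{x,c}(\nu^*)$ this yields $W\equiv U$ and the optimality of $\nu^*$.

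\emph{Main obstacle.} The delicate points are analytic rather than conceptual. First, justifying the generalised It\^o formula for $W$, which is $C^1$ but whose second $x$-derivative is merely in $L^\infty_{loc}$ and genuinely discontinuous across the moving boundaries $\hat\beta,\hat\gamma,\gamma^o$: one must argue via an approximation/mollification and a dominated-convergence passage, using that the time the controlled process spends at the boundaries has zero Lebesgue measure (occupation-time/local-time argument for the one-dimensional diffusion $X$). Second, the integrability needed to drop the local-martingale term and to send $t\to\infty$: here one leans on the sublinear growth $|W(x,c)|\le K(1+|x|)$, the uniform bound $\nu\le 1$, and the uniformly integrable martingale property of $\int_0^t e^{-\lambda s}\nu_s\,dB_s$ exactly as in Proposition \ref{sublineargrowth}. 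Third, one must confirm that $\nu^*\in\mathcal{A}_c$ (left-continuity, adaptedness, $c+\nu^*\le 1$, $\nu^*_0=0$) and that under $\nu^*$ the relevant hitting times behave as claimed — in particular that $\tau_{\hat\gamma}$ (or $\sigma^*$) is $\PP$-a.s.\ finite so that the inventory is indeed filled, which follows from recurrence of Brownian motion and the finiteness of the boundary values $\hat\gamma(\hat c)=\gamma^o$ and $\hat\gamma(c)$ for $c>c_o$.
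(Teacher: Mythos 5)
Your proposal is correct and follows essentially the same verification argument as the paper: part (i) is the same weak It\^o/HJB lower bound (the paper invokes the Fleming--Soner version of It\^o's formula rather than mollifying by hand) with the same localisation and integrability considerations, and part (ii) differs only in bookkeeping — the paper stops It\^o's formula at $\tau_{\hat{\gamma}}$ and evaluates $W(\hat{\gamma}(\cdot),\cdot)=\hat{\gamma}(\cdot)(1-\cdot)$ explicitly, whereas you track equality term by term, which additionally uses the monotonicity of $\hat{\beta}$ and $\hat{\gamma}$ (Corollary \ref{cor:regBdr}) to see that $W_c=-x$ holds along the whole jumped inventory interval. Both routes rest on the same ingredients and are valid.
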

\begin{proof}
The proof is based on a verification argument and, as usual, it splits into two parts.\vspace{+8pt}

(i) 
Fix $(x,c)\in\RR\times[0,1]$ and take $R>0$. Set $\tau_{R}:=\inf\big\{t\ge 0\,:\,X^x_t\notin(-R,R)\big\}$, take an admissible control $\nu$, and recall the regularity results for $W$ in Proposition \ref{prop:C1past}. Then we can use It\^o's formula in the weak version of \cite{FlemingSoner}, Chapter 8, Section VIII.4, Theorem 4.1, up to the stopping time $\tau_R \wedge T$, for some $T>0$, to obtain
\begin{align*}
W(x,c) =& \EE\left[e^{-\lambda(\tau_{R}\wedge T)}W(X_{\tau_{R}\wedge T}^x, {C}_{\tau_{R}\wedge T}^{c,\nu})\right]-\EE\bigg[\int_0^{\tau_{R}\wedge T}e^{-\lambda s}(\tfrac{1}{2}W_{xx}-\lambda W)(X^x_s,{C}^{c,\nu}_s)ds\bigg] \\
&  - \EE\bigg[\int_0^{\tau_{R}\wedge T}e^{-\lambda s}W_c(X^x_s,{C}^{c,\nu}_s)d\nu_s\bigg]
\\ &  - \EE\Big[\sum_{0\leq s < \tau_{R}\wedge T}e^{-\lambda s}
\left(W(X^x_s,{C}^{c,\nu}_{s+})-W(X^x_s,{C}^{c,\nu}_s)-W_c(X^x_s,{C}^{c,\nu}_s)\Delta \nu_s\right)\Big]
\end{align*}
where $\Delta \nu_s := \nu_{s+}-\nu_s$ and the expectation of the stochastic integral vanishes since $W_x$ is bounded on $(x,c)\in[-R,R]\times[0,1]$.

Now, recalling that any $\nu \in \mathcal{A}_c$ can be decomposed into the sum of its continuous part and its pure jump part, i.e.\ $d\nu=d\nu^{cont}+ \Delta \nu$, one has (see \cite{FlemingSoner}, Chapter 8, Section VIII.4, Theorem 4.1 at pp.\ 301-302)
\begin{align*}
W(x,c) = & \EE\left[e^{-\lambda(\tau_{R}\wedge T)}W(X_{\tau_{R}\wedge T}^x, C_{\tau_{R}\wedge T}^{c,\nu})\right]-\EE\bigg[\int_0^{\tau_{R}\wedge T}e^{-\lambda s}(\tfrac{1}{2}W_{xx}-\lambda W)(X^x_s,C^{c,\nu}_s)ds\bigg] \\
&  - \EE\bigg[\int_0^{\tau_{R}\wedge T}e^{-\lambda s}W_c(X^x_s,C^{c,\nu}_s)d\nu_s^{cont} + \sum_{0\le s < \tau_{R}\wedge T}e^{-\lambda s}
\left(W(X^x_s,C^{c,\nu}_{s+})-W(X^x_s,C^{c,\nu}_s)\right)\bigg].
\end{align*}
Since $W$ satisfies the Hamilton-Jacobi-Bellman equation \eqref{eq:HJB} (cf.\ Proposition \ref{prop:VI}) and by {writing}
\begin{equation}
\label{jump}
W(X^x_s,C^{c,\nu}_{s+})-W(X^x_s,C^{c,\nu}_{s})=
\int_0^{\Delta \nu_s} W_c(X^x_s,C^{c,\nu}_{s}+u)du,
\end{equation}
we obtain
\begin{align}
\label{verif04}
W(x,c) \leq &\EE\left[e^{-\lambda(\tau_{R}\wedge T)}W(X_{\tau_{R}\wedge T}^x, C_{\tau_{R}\wedge T}^{c,\nu})\right]+\EE\bigg[\int_0^{\tau_{R}\wedge T}e^{-\lambda s}\lambda X^x_s \Phi(C^{c,\nu}_s)ds\bigg]\nonumber \\
& + \EE\bigg[\int_0^{\tau_{R}\wedge T}e^{-\lambda s} X^x_s d\nu_s^{cont}\bigg]
 + \EE\Big[\sum_{0\leq s < \tau_{R}\wedge T}e^{-\lambda s}
X^x_s \Delta \nu_s \Big] \\
 = &\EE\bigg[e^{-\lambda(\tau_{R}\wedge T)}W(X_{\tau_{R}\wedge T}^x, C_{\tau_{R}\wedge T}^{c,\nu})+\int_0^{\tau_{R}\wedge T}e^{-\lambda s}
\lambda X^x_s \Phi(C^{c,\nu}_s)ds + \int_0^{\tau_{R}\wedge T}e^{-\lambda s}X^x_s d\nu_s\bigg].\nonumber
\end{align}

When taking limits as $R\to\infty$ we have $\tau_{R}\wedge T \rightarrow T$, $\PP$-a.s. By standard properties of Brownian motion it is easy to prove that the integral terms in the last expression on the right hand side of \eqref{verif04} are uniformly bounded in $L^2(\Omega,\PP)$, hence uniformly integrable. Moreover, $W$ has sub-linear growth by Proposition \ref{prop:VI}. Then we also take limits as $T\uparrow \infty$ and it follows that
\begin{align}
W(x,c) \leq \EE\bigg[\int_0^{\infty}e^{-\lambda s}\lambda X^x_s \Phi(C^{c,\nu}_s)ds + \int_0^{\infty}e^{-\lambda s} X^x_s d\nu_s \bigg],
\end{align}
due to the fact that $\lim_{T \rightarrow \infty}\EE[e^{-\lambda T}W(X^x_T,C^{c,\nu}_T)]=0$.
Since the latter holds for all admissible $\nu$ we have $W(x,c) \leq U(x;c)$.
\vspace{+8pt}

(ii)
If $c=1$ then $W(x,1)=0=U(x,1)$. Then take $c\in[0,1)$ and define $C^*_t:=C^{c,\nu^*}_t=c+\nu^*_t$, with $\nu^*$ as in \eqref{op-contr01}. Applying It\^o's formula again (possibly using localisation arguments as above) up to time $\tau_{\hat{\gamma}}$ (cf.\ \eqref{taubetataugamma}) we find
\begin{align}
\label{ito}
W(x,c)=&\EE\bigg[e^{-\lambda\tau_{\hat{\gamma}}}W(X^x_{\tau_{\hat{\gamma}}},C^*_{\tau_{\hat{\gamma}}}) - \int^{\tau_{\hat{\gamma}}}_0 e^{-\lambda s} (\tfrac{1}{2}W_{xx}-\lambda W)(X^x_s,C^*_{s}) ds\bigg]\nonumber\\
&-\EE\bigg[\sum_{0\le s<\tau_{\hat{\gamma}}}e^{-\lambda s}\big(W(X^x_s,C^*_{s+})-W(X^x_s,C^*_{s})\big)\bigg],
\end{align}
where we have used that $\nu^*$ does not have a continuous part. We also recall, as already observed, that $\tau_{\hat{\gamma}}<+\infty$, $\PP$-a.s.~under the control policy of $\nu^*$.

From \eqref{tausigmastar} one has $\tau^*\le\tau_{\hat{\gamma}}$, $\PP$-a.s.~and therefore we can always write
\begin{align}\label{eq:int00}
\int^{\tau_{\hat{\gamma}}}_0 &e^{-\lambda s} (\tfrac{1}{2}W_{xx}-\lambda W)(X^x_s,C^*_{s}) ds\nonumber\\
=&  \int^{\tau^*}_0 e^{-\lambda s} (\tfrac{1}{2}W_{xx}-\lambda W)(X^x_s,C^*_{s}) ds
+ \int^{\tau_{\hat{\gamma}}}_{\tau^*} e^{-\lambda s} (\tfrac{1}{2}W_{xx}-\lambda W)(X^x_s,C^*_{s}) ds\nonumber\\
=& - \int^{\tau^*}_0 e^{-\lambda s} \lambda X^x_s\Phi(C^*_{s}) ds + \int^{\tau_{\hat{\gamma}}}_{\tau^*} e^{-\lambda s} (\tfrac{1}{2}W_{xx}-\lambda W)(X^x_s,C^*_{s}) ds
\end{align}
where the last equality follows by recalling that $(\tfrac{1}{2}W_{xx}-\lambda W)(x,c)=-\lambda x\Phi(c)$ for $\hat{\gamma}(c)<x<\hat{\beta}(c)$ and hence it holds in the first integral for $s\le\tau^*$. To evaluate the last term of \eqref{eq:int00} we study separately the events $\{\tau^* = \tau_{\hat{\beta}}\}$ and $\{\tau^* = \tau_{\hat{\gamma}}\}$. We start by observing that under the control strategy $\nu^*$ one has $\{\tau^* = \tau_{\hat{\beta}}\}= \{\tau_{\hat{\gamma}}=\sigma^*\}$ and we get
\begin{align}\label{eq:int01}
\mathds{1}_{\{\tau^* = \tau_{\hat{\beta}}\}}\int^{\tau_{\hat{\gamma}}}_{\tau^*} e^{-\lambda s} (\tfrac{1}{2}W_{xx}-\lambda W)(X^x_s,C^*_{s}) ds = - \mathds{1}_{\{\tau^* = \tau_{\hat{\beta}}\}} \int^{\tau_{\hat{\gamma}}}_{\tau^*} e^{-\lambda s} \lambda X^x_s\Phi(C^*_{s}) ds
\end{align}
by Proposition \ref{prop:solWo} since $(X^x_s, C^*_s) = (X^x_s,\hat{c})$ for any $\tau^* < s \leq \tau_{\hat{\gamma}}=\sigma^*$ on $\{\tau^* = \tau_{\hat{\beta}}\}$. On the other hand
\begin{align}\label{eq:int02}
\mathds{1}_{\{\tau^* = \tau_{\hat{\gamma}}\}}\int^{\tau_{\hat{\gamma}}}_{\tau^*} e^{-\lambda s} (\tfrac{1}{2}W_{xx}-\lambda W)(X^x_s,C^*_{s}) ds = 0 = \mathds{1}_{\{\tau^* = \tau_{\hat{\gamma}}\}}\int^{\tau_{\hat{\gamma}}}_{\tau^*} e^{-\lambda s} \lambda X^x_s\Phi(C^*_{s}) ds.
\end{align}
Then it follows from \eqref{eq:int00}, \eqref{eq:int01} and \eqref{eq:int02} that
\begin{align}
\label{observation1}
 \int^{\tau_{\hat{\gamma}}}_0 e^{-\lambda s} (\tfrac{1}{2}W_{xx}-\lambda W)(X^x_s,C^*_{s}) ds = - \int_{0}^{\tau_{\hat{\gamma}}}\lambda X^x_s \Phi(C^*_s) ds.
\end{align}
Moreover $\Phi(C^*_s)=0$ for any $s > \tau_{\hat{\gamma}}$ because $C^*_s = 1$ for any such $s$ and thus we finally get from \eqref{observation1}
\begin{equation}
\label{observation2}
\int^{\tau_{\hat{\gamma}}}_0 e^{-\lambda s} (\tfrac{1}{2}W_{xx}-\lambda W)(X^x_s,C^*_{s}) ds =  -\int_{0}^{\infty}\lambda X^x_s \Phi(C^*_s) ds.
\end{equation}

Note that under the control strategy $\nu^*$ {we also have} $\{\tau_{\hat{\gamma}}<\tau_{\hat{\beta}}\}=\{\tau_{\hat{\gamma}}<\sigma^*\}$ and $\{\tau_{\hat{\gamma}}>\tau_{\hat{\beta}}\}=\{\tau_{\hat{\gamma}}=\sigma^*\}$, then from \eqref{op-contr01} we have
\begin{align}
\label{observation3}
\EE&\left[e^{-\lambda\tau_{\hat{\gamma}}}W(X^x_{\tau_{\hat{\gamma}}},C^*_{\tau_{\hat{\gamma}}})\right] \nonumber \\ =&\EE\left[\mathds{1}_{\{\tau_{\hat{\gamma}}>\tau_{\hat{\beta}}\}}e^{-\lambda\tau_{\hat{\gamma}}}W(\hat{\gamma}(\hat{c}),\hat{c})\right]
+\EE\left[\mathds{1}_{\{\tau_{\hat{\gamma}}<\tau_{\hat{\beta}}\}}e^{-\lambda\tau_{\hat{\gamma}}}W(\hat{\gamma}(c),c)\right] \nonumber \\ =&\EE\left[\mathds{1}_{\{\tau_{\hat{\gamma}}>\tau_{\hat{\beta}}\}}e^{-\lambda\tau_{\hat{\gamma}}}\hat{\gamma}(\hat{c})(1-\hat{c})\right]+ \EE\left[\mathds{1}_{\{\tau_{\hat{\gamma}}<\tau_{\hat{\beta}}\}}e^{-\lambda\tau_{\hat{\gamma}}}\hat{\gamma}(c)(1-c)\right]\nonumber \\
=&\EE\bigg[ \int_{\tau_{\hat{\gamma}}}^{\infty} e^{-\lambda s} X^x_s d\nu^*_s \bigg]
\end{align}
by using that $W(\hat{\gamma}(c), c) =\hat{\gamma}(c)(1-c)$ for all $c\in[0,1)$ as proved in Section \ref{construction} (see also Figure \ref{fig:1}).

For the jump part of the control, i.e.~for the last term in \eqref{ito}, again we argue in a similar way as above and use that on the event $\{\tau^*=\tau_{\hat{\gamma}}\}$ there is no jump strictly prior to $\tau_{\hat{\gamma}}$ and the sum in \eqref{ito} is zero, whereas on the event $\{\tau^*=\tau_{\hat{\beta}}\}$ a single jump occurs prior to $\tau_{\hat{\gamma}}$, precisely at $\tau_{\hat{\beta}}$. This gives
\begin{align}\label{jump1}
\EE&\Big[\sum_{0 \leq s < \tau_{\hat{\gamma}}}e^{-\lambda s}\big(W(X^x_s,C^*_{s+})-W(X^x_s,C^*_{s})\big)\Big]\nonumber\\
=&\EE\Big[\mathds{1}_{\{\tau^*=\tau_{\hat{\gamma}}\}}\cdot 0+\mathds{1}_{\{\tau^*=\tau_{\hat{\beta}}\}}e^{-\lambda\tau_{\hat{\beta}}}X^x_{\tau_{\hat{\beta}}}(\hat{c}-c)\Big]=\EE\bigg[\int^{\tau_{\hat{\gamma}}}_0 e^{-\lambda s} X^x_s d\,\nu^{*}_s\bigg].
\end{align}
Combining \eqref{observation2}, \eqref{observation3} and \eqref{jump1} it follows from \eqref{ito} that
\begin{equation}
\label{opt-C02}
W(x,c)=\EE\bigg[\int^{\infty}_0 e^{-\lambda s}\lambda\,X^x_s\,\Phi(C^*_s) ds + \int_0^{\infty}e^{-\lambda s}X^x_s d\nu^*_s\bigg] \geq U(x,c),
\end{equation}
which together with (i) above implies $W(x,c)=U(x,c)$, $(x,c)\in\RR\times[0,1]$ and $\nu^*$ of \eqref{op-contr01} is optimal.
\end{proof}

\begin{remark}
\label{rem:opt}
Note that, unusually, from \eqref{Wo} we see $W^o_c(x,\hat{c})=-x$ for all $x\in\RR$ (see Figure \ref{fig:1}) whereas we would expect $W^o_c(x,\hat{c})>-x$ for $x>\gamma^o$. This is possible because $(\tfrac{1}{2}W_{xx}-\lambda W)(x,\hat{c})=-\lambda\Phi(\hat{c})x$ for $x>\gamma^o$ and therefore, as long as $X$ stays above $\gamma^o$, an inaction strategy does not increase the overall costs. \end{remark}


\appendix
\section{Some proofs needed in Section \ref{construction}}
\label{someproofs}
\renewcommand{\theequation}{A-\arabic{equation}}

\begin{proof}{\textbf{[Proposition \ref{monboundaries}]}}\vspace{+5pt}

Rewrite \eqref{system-y1y2} as
\begin{equation}
\label{system-y1y2-tris}
F_1(\hat{y}_1(c),\hat{y}_2(c);c) =0\,\quad\text{and}\quad F_2(\hat{y}_1(c),\hat{y}_2(c);c) =0,
\end{equation}
with the two functions $F_i: (0,\infty) \times (0,\infty) \times [0,1]$, $i=1,2$, defined by \eqref{F1} and \eqref{F2} of Proposition \ref{thm:main-bd}.
The Jacobian matrix
\begin{align*}
J(x,y,c)=\left[ \begin{array}{ll}
\pd{F_1}{x}& \pd{F_1}{y}\\[+3pt]
\pd{F_2}{x} & \pd{F_2}{y}\end{array}\right](x,y,c)
=\frac 1 4 \left[ \begin{array}{ll}
-R(c) x^{-\frac 3 2 }\ln x &  (R(c)-R(\hat c))y^{-\frac 3 2 }\ln y\\[+3pt]
-R(c)x^{-\frac 1 2 }\ln x &  (R(c)-R(\hat c)) y^{-\frac 1 2 }\ln y
\end{array}\right]
\end{align*}
 has determinant
\[
D(x,y,c):= \frac{1}{16}\big[R(c) - R(\hat{c})\big]R(c)\frac{1}{\sqrt{xy}}{\Big(\frac{1}{y} - \frac{1}{x}\Big)}\ln x\ln y
\]
which is strictly negative when $x \le e^{-2}$, $y > 1$ and $c \in(c_0, \hat c)$. To simplify notation we suppress the dependency of $\hat{y}_1$ and $\hat{y}_2$ on $c$. Total differentiation of \eqref{system-y1y2-tris}
with respect to $c$ and the Implicit Function Theorem imply that $\hat{y}_1$ and $\hat{y}_2$ lie in $C^1(c_o,\hat c)$, with
\begin{align*}
\left[ \begin{array}{l} \hat{y}_1'(c) \\ \hat{y}_2'(c) \end{array}\right] = J^{-1}
\left[ \begin{array}{l} -\pd{F_1}{c} \\[+3pt] -\pd{F_1}{c} \end{array}\right](\hat{y}_1,\hat{y}_2,c)
= R'(c) J^{-1}(\hat{y}_1,\hat{y}_2,c) \left[ \begin{array}{l}
\hat{y}_1^{-\frac{1}{2}}(1 + \tfrac{1}{2}\ln \hat{y}_1) - \hat{y}_2^{-\frac{1}{2}}(1 + \tfrac{1}{2}\ln \hat{y}_2) \\
\hat{y}_1^{\frac{1}{2}}(1 - \tfrac{1}{2}\ln \hat{y}_1) - \hat{y}_2^{\frac{1}{2}}(1 - \tfrac{1}{2}\ln \hat{y}_2)
 \end{array}\right]
\end{align*}
so that
\begin{align}
\label{derivativesy1}
\hat{y}_1'(c)=&\frac{1}{4}\frac{R'(c)\big[R(c) - R(\hat{c})\big]}{D(\hat{y}_1,\hat{y}_2,c)}\Big[\sqrt{\frac{\hat{y}_1}{\hat{y}_2}}(1 - \frac{1}{2}\ln \hat{y}_1) + \ln \hat{y}_2 - \sqrt{\frac{\hat{y}_2}{\hat{y}_1}}(1 + \frac{1}{2}\ln \hat{y}_1 )\Big]\hat{y}_2^{-1}\ln \hat{y}_2 \nonumber\\[+4pt]
=:&D_1(\hat{y}_1,\hat{y}_2,c)/D(\hat{y}_1,\hat{y}_2,c)
\end{align}
and
\begin{align}
\label{derivativesy2}
\hat{y}_2'(c)=&-\frac{1}{4}\frac{R(c)R'(c)}{D(\hat{y}_1,\hat{y}_2,c)} \Big[\hat{y}_1^{\frac{1}{2}}\ln \hat{y}_1 - \hat{y}_1\hat{y}_2^{-\frac{1}{2}}(1 + \frac{1}{2}\ln \hat{y}_2) + \hat{y}_2^{\frac{1}{2}}(1 - \frac{1}{2}\ln \hat{y}_2)\Big]
\hat{y}_1^{-\frac{3}{2}}\ln \hat{y}_1\nonumber\\[+4pt]
=:&D_2(\hat{y}_1,\hat{y}_2,c)/D(\hat{y}_1,\hat{y}_2,c)
\end{align}
It is not hard to verify that $\hat{y}'_1(c)>0$ by using $0<\hat{y}_1<e^{-2}$, $\hat{y}_2>1$ and $R(c)<R(\hat{c})$.
The sign of the right hand side of \eqref{derivativesy2} is opposite to the sign of
\begin{equation}
\label{def-hatD2}
\hat{D}:=\hat{y}_1^{\frac{1}{2}}\ln \hat{y}_1 - \hat{y}_1\hat{y}_2^{-\frac{1}{2}}(1 + \frac{1}{2}\ln \hat{y}_2) +
\hat{y}_2^{\frac{1}{2}}(1 - \frac{1}{2}\ln \hat{y}_2)
\end{equation}
since $\ln \hat{y}_1<0$, $R'(c) >0$ and $D(\hat{y}_1,\hat{y}_2,c)<0$, $c\in(c_0,\hat{c})$.
Recalling now \eqref{system-y1y2-tris}, \eqref{F1} and \eqref{F2}, we obtain
\begin{align*}
\hat{y}_2^{\frac{1}{2}}\big(1 - \frac{1}{2}\ln\hat{y}_2\big) = \frac{R(c)}{R(c)-R(\hat{c})}\hat{y}_1^{\frac{1}{2}}\big(1 - \frac{1}{2}\ln\hat{y}_1\big) - \frac{2e^{-1}}{R(c)-R(\hat{c})}R(\hat{c}),
\end{align*}
and
\begin{align*}
\hat{y}_2^{-\frac{1}{2}}\big(1 + \frac{1}{2}\ln \hat{y}_2\big) = \frac{R(c)}{R(c)-R(\hat{c})}\hat{y}_1^{-\frac{1}{2}}\big(1 + \frac{1}{2}\ln\hat{y}_1\big),
\end{align*}
which plugged into \eqref{def-hatD2} give
\begin{align*}
\hat{D}= - \frac{R(\hat{c})}{(R(c)-R(\hat{c})}(\sqrt{\hat{y}_1}\ln\hat{y}_1 + 2e^{-1})=:- \frac{R(\hat{c})}{(R(c)-R(\hat{c})}q(\hat{y}_1).
\end{align*}
It is now easy to see that $x \mapsto q(x)$ is strictly decreasing on $(0,e^{-2})$ and such that $q(e^{-2})=0$ and $\lim_{x \downarrow 0}q(x) = 2e^{-1} > 0$. Hence $q(\hat{y}_1) > 0$ implies that $\hat{D}>0$ and $\hat{y}_2^{'}(c) <0$.

To complete the proof we need to show properties $(1)$-$(4)$. We observe that due to the monotonicity of $\hat{y}_i(\cdot)$, $i=1,2$, on $(c_o, \hat{c})$ their limits exist at all points of this interval. 
\begin{itemize}
\item [$(1)$]
Taking limits as $c \uparrow \hat{c}$ in the second equation of \eqref{system-y1y2-tris}, using \eqref{F2} and defining $\hat{y}_1(\hat{c}-):=\lim_{c \uparrow \hat{c}}\hat{y}_1(c)$ we get
\begin{align*}
\hat{y}_1^{\frac{1}{2}}(\hat{c}-)\big(1 - \frac{1}{2}\ln\hat{y}_1(\hat{c}-)\big) = 2e^{-1},
\end{align*}
which is uniquely solved by $\hat{y}_1(\hat{c}-)=e^{-2}$.
\item[$(2)$] We argue by contradiction and assume that $\lim_{c\downarrow c_o}\hat{y}_1(c) =\overline{y}_1 > 0$. Then taking limits as $c\downarrow c_o$ in the first equation of \eqref{system-y1y2-tris} and recalling that $R(c_o)=0$ we find
\begin{align*}
R(\hat{c})\sqrt{\hat{y}_2(c_o+)}\big[1 + \frac{1}{2}\ln\hat{y}_2(c_o+)\big]=0,
\end{align*}
which is clearly impossible since $\hat{y}_2(c_o+)\geq 1$ due to the fact that $\hat{y}_2(c)>1$ for any $c\in (c_o,\hat{c})$.
\item [$(3)$] From the second equality in \eqref{system-y1y2-tris} and by \eqref{F2} one finds
\begin{equation}
\label{stima4}
\sqrt{\hat{y}_2(c)}\big[1 - \frac{1}{2}\ln\hat{y}_2(c)\big] = \frac{2e^{-1}R(\hat{c}) - R(c)\sqrt{\hat{y}_1(c)}\big[1 - \frac{1}{2}\ln\hat{y}_1(c)\big]}{R(\hat{c}) -R(c)}\ge 2 e ^{-1} > 0
\end{equation}
where the first lower bound follows by the fact that $x \mapsto \sqrt{x}[1 - \frac{1}{2}\ln(x)]$ is strictly increasing and positive on $[0,e^{-2}]$, with maximum value $2e^{-1}$. Since also $\hat{y}_2(c)>0$, from \eqref{stima4} we conclude that $\big[1 - \frac{1}{2}\ln\hat{y}_2(c)\big] > 0$, thus implying $\hat{y}_2(c)<e^{2}$.
\item[$(4)$]
We take limits as $c \uparrow \hat{c}$ in \eqref{derivativesy1} and notice that $\hat{y}'_1(c)\to 0$ since $R'(c)\to 0$ (notice also that both functions $D_1$ and $D$ are proportional to $\ln \hat{y}_2$ so that their quotient remains finite).
\end{itemize}
\end{proof}

\begin{proof}{\textbf{[Proposition \ref{existencey2star}]}}\vspace{+5pt}

Note first that by \eqref{def-H2} and \eqref{def-Hy}, equation \eqref{defy2star} may be rewritten in the equivalent form
\begin{equation}
\label{defy2star-bis}
F_3(y;c)=0,
\end{equation}
where the jointly continuous function $F_3:(0,\infty) \times [0,1] \mapsto \mathbb{R}$ is defined in \eqref{F3} of Proposition \ref{thm:main-bd}.
The proof is carried out in three parts and for simplicity we omit the dependency of $y^*_2$ on $c$.\vspace{0.1cm}

(i)
 {The function $f(y):=\sqrt{y}(1-\frac{1}{2}\ln(y))$
is strictly decreasing on $(1,e^{2})$ with $f(1)=1$ and $f(e^2)=0$ so, since the absolute value of the second term of \eqref{F3} is smaller than one,} there exists a unique $y_2^*(c) \in (1,e^2)$ solving \eqref{defy2star-bis}. Moreover since
\begin{align}
\frac{\partial F_3}{\partial y}(y,c)=-\tfrac{1}{4} y^{-\frac{1}{2}}\ln y<0\quad\text{for $(y,c)\in (1,e^2)\times [0,c_o)$}
\end{align}
we can use the implicit function theorem to conclude {that $y^*_2\in C^1([0,c_o))$ and}

\begin{align}\label{eq:yprime}
(y^*_2)'(c)= -\Big(\displaystyle\frac{\partial F_3}{\partial y}\Big /\displaystyle \frac{\partial F_3}{\partial c}\Big)(y^*_2,c)=-\frac{8e^{-1}R(\hat{c})R'(c)}{\big(R(\hat{c})-R(c)\big)^2}\frac{\sqrt{y^*_2}}{\ln y^*_2}<0\quad\text{for $c\in [0,c_o)$.}
\end{align}
\vspace{0.1cm}

(ii)
The limit $y^*_2(c_o-):=\lim_{c\uparrow c_o}y^*_2(c)$ exists by monotonicity and so by continuity we have $F_3(y^*_2(c_o-);c_o)=0$, i.e.
\begin{align}\label{eq:limy*2}
\sqrt{y^*_2(c_o-)}\big(1-\tfrac{1}{2}\ln y^*_2(c_o-)\big)=2e^{-1}.
\end{align}

We now take limits as $c\downarrow c_o$ in \eqref{F2} and use part \ref{y1limco} of Proposition \ref{monboundaries} to conclude that
\begin{align}\label{eq:limy*3}
\sqrt{\hat{y}_2(c_o+)}\big(1-\tfrac{1}{2}\ln \hat{y}_2(c_o+)\big)=2e^{-1},
\end{align}
where $\hat{y}_2(c_o+):=\lim_{c\downarrow c_o}\hat{y}_2(c)$ exists by monotonicity of $\hat{y}_2$ (cf.~Proposition \ref{monboundaries}). Hence from \eqref{eq:limy*2}, \eqref{eq:limy*3} and uniqueness of the solution to $F_3(y;c_o)=0$ we obtain \eqref{limdxsx1}.
\vspace{0.1cm}

(iii)
 Setting $\hat{y}_2(c_o):=y^*_2(c_o-)=\hat{y}_2(c_o+)$ and taking limits as $c\uparrow c_o$ in \eqref{eq:yprime}
we obtain
\begin{equation}
\label{limy2prime}
(y_2^*)'(c_o-) = -\frac{8e^{-1}R'(c_o)\sqrt{\hat{y}_2(c_o)}}{R(\hat{c}) \ln \hat{y}_2(c_o)}.
\end{equation}

We now turn to study the limit of $\hat{y}^{'}_2(c)$ when $c \downarrow c_o$. 
We have $\hat{y}_1(c) \downarrow 0$ and $R(c) \downarrow 0$, however 
by taking limits in the first equation of \eqref{system-y1y2-tris} it turns out that 
\begin{align}\label{limrate}
\lim_{c\downarrow c_o}R(c) \hat{y}_1^{-\frac{1}{2}}(c) \ln \hat{y}_1(c)= -\ell\quad\text{for some $\ell>0$},
\end{align}
since $\hat{y}_2(c_o+)$ exists in $[1,e^2]$ (see Proposition \ref{monboundaries}).
Therefore as $c$ approaches $c_o$ from above we have the following asymptotic behaviours in \eqref{derivativesy2}
$$D(\hat{y}_1(c), \hat{y}_2(c),c) \approx \frac{1}{16}R(\hat{c})R(c)\hat{y}_2^{-\frac{1}{2}}(c) \ln \hat{y}_2(c) \hat{y}_1^{-\frac{3}{2}}(c) \ln \hat{y}_1(c),$$
and
$$D_2(\hat{y}_1(c), \hat{y}_2(c),c) \approx -\tfrac{1}{4} R'(c)R(c)\hat{y}_2^{\frac{1}{2}}(c)\big(1 - \tfrac{1}{2} \ln \hat{y}_2(c)\big) \hat{y}_1^{-\frac{3}{2}}(c) \ln \hat{y}_1(c).$$
Hence
\begin{equation}
\label{DooverD2}
\hat{y}'_2(c)\approx -4 \frac{R'(c)}{R(\hat{c})}\left[\frac{\hat{y}_2^{\frac{1}{2}}(c)\Big(1 - \frac{1}{2} \ln \hat{y}_2(c)\Big)}{\hat{y}_2^{-\frac{1}{2}}(c) \ln \hat{y}_2(c)}\right].
\end{equation}
and \eqref{limdxsx2} now follows from \eqref{eq:limy*3} in the limit as $c \downarrow c_o$.
\end{proof}

\begin{proof}{\textbf{[Proposition \ref{prop-Wcgeqx}]}}\vspace{+5pt}

Recalling \eqref{Wstar2}, \eqref{def-V}, \eqref{def-G} and Proposition \ref{prop:C1} we see that it suffices to show that $V_c(x,c) \geq G_c(x,c)$ for any $x \in (\hat{\gamma}(c), \hat{\beta}(c))$ and $c\in[0,\hat{c})$. The proof is performed in two parts.
\vspace{+6pt}

(i)
 Fix $c\in [0,c_o]$ and recall that (cf.\ Section \ref{clessco} and \eqref{regions}) for any such $c$ the continuation set is of the form $(-\infty,\hat{\beta}(c))$. Define $u:=V_c - G_c$, then it is not hard to see by \eqref{def-G}, Proposition \ref{prop:C1} and \eqref{fbp-V00} that $u\in C(\RR\times[0,c_o])$ and it is the unique classical solution of
\begin{equation}
\label{eq-u}
(\tfrac{1}{2}\tfrac{d^2}{dx^2} - \lambda)u(x,c) = - \lambda x (1 + \Phi'(c)),\quad\text{for $x<\hat{\beta}(c)$ with $u(\hat{\beta}(c),c)=0$.}
\end{equation}
Therefore, setting $\tau_\beta:=\inf\{t\geq 0: X^x_t \geq \hat{\beta}(c)\}$
and using the Feynmann-Kac representation formula (possibly up to a standard localisation argument), we get
\begin{align}
\label{FeyKac-u-clessc0}\hspace{-5pt}
u(x,c)  \hspace{-2pt}= &\EE\bigg[e^{-\lambda \tau_\beta}u(X^x_{\tau_\beta},c) + \lambda(1 + \Phi'(c))\hspace{-3pt}\int_0^{\tau_\beta}\hspace{-4pt}e^{-\lambda t} X^x_t dt\bigg]\hspace{-2pt}  =\hspace{-2pt}  (1 + \Phi'(c))\EE\bigg[\hspace{-3pt}\int_0^{\tau_\beta}\hspace{-4pt}\lambda e^{-\lambda t} X^x_t dt\bigg],
\end{align}
where we have used that $u(X^x_{\tau_\beta},c) = 0$ $\PP$-a.s.~since $\tau_\beta < \infty$ $\PP$-a.s.\ by the recurrence property of Brownian motion. Recalling that $X^x_t = x + B_t$ (cf.\ \eqref{statevariable}) 
Dynkin's formula and standard formulae for the Laplace transform of $\tau_\beta$ lead from \eqref{FeyKac-u-clessc0} to
\begin{equation}
\label{uclessco}
u(x,c) = (1 + \Phi'(c))\Big( x - \EE\big[e^{-\lambda \tau_\beta} X^x_{\tau_\beta}\big]\Big) = (1 + \Phi'(c))\left[ x - \hat{\beta}(c) \frac{\psi_{\lambda}(x)}{\psi_{\lambda}(\hat{\beta}(c))}\right].
\end{equation}
Since $(1+\Phi'(c)) < 0$ for $c\in [0,c_o]$, we have $u(x,c)=V_c(x,c) - G_c(x,c) \geq 0$ if and only if
\begin{equation}
\label{thetaclessco}
\theta(x,c):=x - \hat{\beta}(c) \frac{\psi_{\lambda}(x)}{\psi_{\lambda}(\hat{\beta}(c))}\leq 0\quad\text{for $x<\hat{\beta}(c)$}.
\end{equation}
From Proposition \ref{existencey2star} we obtain $1<\hat{y}_2(c) < e^2$ and hence $0<\hat{\beta}(c) < 1/\sqrt{2\lambda}$. Therefore, also recalling that $\psi_{\lambda}(x) = e^{\sqrt{2\lambda} x}$, one has for any $x < \hat{\beta}(c)$
$$\theta_x(x,c)=1 - \hat{\beta}(c)\sqrt{2\lambda}e^{\sqrt{2\lambda}(x - \hat{\beta}(c))} \geq 1 - \hat{\beta}(c)\sqrt{2\lambda} \geq 0.$$
We can now conclude that \eqref{thetaclessco} is fulfilled since $\theta(\,\cdot\,,c)$ is increasing for $x<\hat{\beta}(c)$ and $\theta(\hat{\beta}(c),c)=0$. Hence $u \geq 0$ in $(-\infty, \hat{\beta}(c)) \times [0,c_o]$.
\vspace{+6pt}

(ii)
 Fix now $c\in (c_o,\hat{c})$, take $x \in (\hat{\gamma}(c), \hat{\beta}(c))$ and denote again $u:=V_c - G_c$. As in part (i) above it is not hard to see that
\begin{align}\label{eq:u01}
(\tfrac{1}{2}\tfrac{d^2}{dx^2}-\lambda) u(x,c)=-\lambda x(1+\Phi'(c))\quad\text{for $x\in(\hat{\gamma}(c), \hat{\beta}(c))$ and $u(\hat{\gamma}(c),c)=u(\hat{\beta}(c),c)=0$.}
\end{align}
Set $\tau_{\gamma,\beta}:=\tau_\gamma\wedge\tau_\beta$ with $\tau_\beta$ as in part (i) above and $\tau_\gamma:=\inf\{t\ge0:X^x_t\le\hat{\gamma}(c)\}$.
Then $u$ is continuous and it admits the Feynmann-Kac representation
\begin{align}
\label{FeyKac-u}
u(x,c) \hspace{-2pt} = \hspace{-2pt} (1 + \Phi'(c))\EE\bigg[\int_0^{\tau_{\gamma,\beta}}\lambda e^{-\lambda t} X^x_t dt\bigg]
\end{align}
where we have used that $u(X^x_{\tau_{\gamma,\beta}},c) = 0$ $\PP$-a.s.\ due to \eqref{eq:u01} and to the fact that $\tau_{\gamma,\beta} < \infty$ $\PP$-a.s.\ by the recurrence property of Brownian motion. Since $(1 + \Phi'(c)) <0 $ on $[c_o,\hat{c})$ then $u(x,c) \geq 0$ on $(\hat{\gamma}(c), \hat{\beta}(c))$ (i.e.~$V_c\geq G_c$) if and only if $\EE[\int_0^{\tau_{\gamma,\beta}}\lambda e^{-\lambda t} X^x_t dt] \leq 0$ for $x\in (\hat{\gamma}(c), \hat{\beta}(c))$.
Thanks to 
Dynkin's and Green's formulae (cf.~also \cite{DayKar}, eq.~(4.3))
\begin{align}
\label{strongMarkov}
\EE&\bigg[\int_0^{\tau_{\gamma,\beta}}\lambda e^{-\lambda t} X^x_t dt\bigg]= x - \EE\big[e^{-\lambda \tau_{\gamma,\beta}} X^x_{\tau_{\gamma,\beta}}\big] \nonumber\\
& =  x - \Big\{\hat{\gamma}(c)\EE\big[e^{-\lambda \tau_\gamma}\mathds{1}_{\{\tau_\gamma<\tau_\beta\}}\big] + \hat{\beta}(c)\EE\big[e^{-\lambda \tau_\beta}\mathds{1}_{\{\tau_\beta<\tau_\gamma\}}\big]\Big\} \nonumber \\
& = x - \bigg\{\hat{\gamma}(c) \frac{\sinh\big(\sqrt{2\lambda}(\hat{\beta}(c) - x)\big)}{\sinh\big(\sqrt{2\lambda}(\hat{\beta}(c) - \hat{\gamma}(c))\big)} + \hat{\beta}(c)\frac{\sinh\big(\sqrt{2\lambda}(x - \hat{\gamma}(c))\big)}{\sinh\big(\sqrt{2\lambda}(\hat{\beta}(c)- \hat{\gamma}(c))\big)}\bigg\} \nonumber\\
& =\frac{1}{\sinh\big(\sqrt{2\lambda}(\hat{\beta}(c)- \hat{\gamma}(c))\big)} \Theta(x,c; \hat{\beta}(c), \hat{\gamma}(c)),
\end{align}
where we define
\begin{align}
\label{def-vartheta}
&\Theta(x,c; \hat{\gamma}(c), \hat{\beta}(c))\\
&:=\Big[x\sinh\big(\sqrt{2\lambda}(\hat{\beta}(c)- \hat{\gamma}(c))\big) - \hat{\gamma}(c) \sinh\big(\sqrt{2\lambda}(\hat{\beta}(c) - x)\big) - \hat{\beta}(c)\sinh\big(\sqrt{2\lambda}(x - \hat{\gamma}(c))\big)\Big].\nonumber
\end{align}
To simplify notation we set $\vartheta(x,c):=\Theta(x,c; \hat{\gamma}(c), \hat{\beta}(c))$.
The right-hand side of \eqref{strongMarkov} is negative for any $x \in (\hat{\gamma}(c), \hat{\beta}(c))$ if and only if $\vartheta(x,c) \leq 0$ therein. To study the sign of $\vartheta$ we first note that $\vartheta(\hat{\gamma}(c),c)=0=\vartheta(\hat{\beta}(c),c)$ and
\begin{equation}
\label{derivativesvartheta}
\left\{
\begin{array}{l}
\vartheta_x(x,c) = \sinh\big(\sqrt{2\lambda}(\hat{\beta}- \hat{\gamma})(c)\big)\\[+3pt]
\hspace{+1.8cm}+ \sqrt{2\lambda}\Big[\hat{\gamma}(c)\cosh\big(\sqrt{2\lambda}(\hat{\beta}(c)- x)\big) - \hat{\beta}(c)\cosh\big(\sqrt{2\lambda}(x -\hat{\gamma}(c))\big)\Big] \\[+10pt]
\vartheta_{xx}(x,c)= -2\lambda\hat{\gamma}(c)\sinh\big(\sqrt{2\lambda}(\hat{\beta}(c)- x)\big) - 2\lambda\hat{\beta}(c)\sinh\big(\sqrt{2\lambda}(x- \hat{\gamma}(c))\big)\\[+4pt]
\vartheta_{xxx}(x,c)=2\lambda\sqrt{2\lambda}\Big[ \hat{\gamma}(c)\cosh\big(\sqrt{2\lambda}(\hat{\beta}(c)- x)\big) - \hat{\beta}(c)\cosh\big(\sqrt{2\lambda}(x- \hat{\gamma}(c))\big)\Big].
\end{array}
\right.
\end{equation}
From \eqref{derivativesvartheta} it is easy to see that $i)$ $\vartheta_x(\hat{\gamma}(c),c) < 0$, since $\hat{\gamma}(c) \leq -1/\sqrt{2\lambda}$, $ii)$ $\vartheta_{xx}(\hat{\gamma}(c),c) > 0$, $\vartheta_{xx}(\hat{\beta}(c),c) < 0$ and $iii)$ $\vartheta_{xxx}(x,c)<0$. Hence $x \mapsto \vartheta_{xx}(x,c)$ is strictly decreasing and there exists a unique point $x_*:=x_*(c)$ such that $\vartheta_{xx}(x_*,c) = 0$. Clearly $x_*$ is a maximum of $x \mapsto \vartheta_x(x,c)$ in $(\hat{\gamma}(c), \hat{\beta}(c))$. We claim now, and will prove later, that $\vartheta_{x}(\hat{\beta}(c),c) > 0$. Then $\vartheta_x(x,c)>0$ for $x\in(x_*,\hat{\beta}(c))$. Moreover since $\vartheta_x(\hat{\gamma}(c),c) < 0$, there exists a unique point $x_*':=x_*'(c) < x_*$ such that $\vartheta_x(x_*',c) = 0$. This point $x_*'$ is the unique stationary point of $\vartheta(\,\cdot\,,c)$ in $(\hat{\gamma}(c),\hat{\beta}(c))$ and it is a negative minimum due to the fact that $\vartheta_{xx}(x,c) > 0$ for any $x < x_*$. Therefore, recalling also $\vartheta(\hat{\gamma}(c),c)=0=\vartheta(\hat{\beta}(c),c)$, we conclude that $\vartheta(x,c) < 0$ for any $x \in (\hat{\gamma}(c),\hat{\beta}(c))$. From \eqref{FeyKac-u} and \eqref{strongMarkov} we thus get $u(x,c) \geq 0$ for any $x \in (\hat{\gamma}(c),\hat{\beta}(c))$.

To complete the proof it remains to show that $\vartheta_{x}(\hat{\beta}(c),c) > 0$. For that it is convenient to rewrite the first equation of \eqref{derivativesvartheta} in terms of $\hat{y}_1(c)$ and $\hat{y}_2(c)$ (cf.\ \eqref{def:gammabeta}) so to have
\begin{eqnarray}
\label{thetaprimebeta}
& & \vartheta_x(\hat{\beta}(c),c)= \Theta_x(F^{-1}_{\lambda}(\hat{y}_2(c)),c; F^{-1}_{\lambda}(\hat{y}_1(c)), F^{-1}_{\lambda}(\hat{y}_2(c)))\nonumber \\
& & = \hat{y}_2^{\frac{1}{2}}(c)\hat{y}_1^{-\frac{1}{2}}(c)\big(1 - \tfrac{1}{2}\ln\hat{y}_2(c)\big) - \hat{y}_2^{-\frac{1}{2}}(c)\hat{y}_1^{\frac{1}{2}}(c)\big(1 + \tfrac{1}{2}\ln\hat{y}_2(c)\big).
\end{eqnarray}
From system \eqref{system-y1y2} (see also \eqref{system-y1y2-tris}, \eqref{F1} and \eqref{F2}) we obtain
\begin{equation}
\label{fromsystem}
\left\{
\begin{array}{l}
\displaystyle\hat{y}_2^{\frac{1}{2}}(c)\big(1 - \tfrac{1}{2}\ln\hat{y}_2(c)\big) = \frac{-2e^{-1}R(\hat{c}) + R(c)\hat{y}_1^{\frac{1}{2}}(c)\big(1 - \tfrac{1}{2}\ln\hat{y}_1(c)\big)}{R(c) - R(\hat{c})}\\[+8pt]
\displaystyle\hat{y}_2^{-\frac{1}{2}}(c)\big(1 + \tfrac{1}{2}\ln\hat{y}_2(c)\big) = \frac{R(c)}{R(c) - R(\hat{c})}\hat{y}_1^{-\frac{1}{2}}(c)\big(1 + \tfrac{1}{2}\ln\hat{y}_1(c)\big),
\end{array}
\right.
\end{equation}
which plugged into \eqref{thetaprimebeta} give
\begin{eqnarray}
\label{thetaprimebeta-bis}
& & 2\vartheta_x(\hat{\beta}(c),c)=2\Theta_x(F^{-1}_{\lambda}(\hat{y}_2(c)),c; F^{-1}_{\lambda}(\hat{y}_1(c)), F^{-1}_{\lambda}(\hat{y}_2(c)))\nonumber \\
& & = \frac{\hat{y}_1^{-\frac{1}{2}}(c)}{R(\hat{c})-R(c)}\Big[ 2e^{-1}R(\hat{c}) + R(c)\sqrt{\hat{y}_1(c)}\ln\hat{y}_1(c)\Big].
\end{eqnarray}
Recalling now that $0 < \hat{y}_1(c) < e^{-2}$, $R(\hat{c})>R(c)> 0$ and noting that the function $\sqrt{x}\ln(x)$ is nonnegative on $[0,e^{-2}]$, we conclude by \eqref{thetaprimebeta-bis} that $\vartheta_x(\hat{\beta}(c),c) > 0$ for all $c\in(c_o,\hat{c})$ as claimed.
\end{proof}

\begin{proof}{\textbf{[Proposition \ref{prop:C1past}]}}\vspace{+5pt}

Since $\hat{\beta},\,\hat{\gamma}\in C^1(c_o,\hat{c})$ and their limits exist and are finite at $\hat{c}$, one can verify by direct computation in \eqref{VDayKar-bis} (recalling also Lemma \ref{lem:Hreg} and that $V\in C^1(\RR\times(0,\hat{c}))$) that $W^1$, $W^1_c$ and $W^1_x$ are uniformly continuous on open sets of the form $(-R,R)\times(\delta,\hat{c})$ for $\delta>0$ and arbitrary $R>0$. Therefore $W^1$ has a $C^1$ extension to $\RR\times(0,\hat{c}]$ which we denote again by $W^1$.

For $x\in(-\infty, \gamma^o]\cup[\hat{\beta}(\hat{c}-),+\infty)$ we have $W^1(x,\hat{c})=W^o(x,\hat{c})$, $W^1_c(x,\hat{c})=W^o_c(x,\hat{c})$ and $W^1_x(x,\hat{c})=W^o_x(x,\hat{c})$ since $V=G$, $V_c=G_c$ and $V_x=G_x$ in that set (cf.~\eqref{def-V}, \eqref{def-G} and \eqref{Wo}).
For $x\in(\gamma^o,\hat{\beta}(\hat{c}-))$ we have
\begin{align}
\label{eq:W1-00} W^1(x,\hat{c})=&x\Phi(\hat{c})+\phi_\lambda(x)Q(F_\lambda(x),\hat{c}-)\\[+4pt]
\label{eq:W1-01} W^1_c(x,\hat{c})=&x\Phi'(\hat{c})+\phi_\lambda(x)Q_c(F_\lambda(x),\hat{c}-)\\[+4pt]
\label{eq:W1-02} W^1_x(x,\hat{c})=&\Phi(\hat{c})+\phi_\lambda(x)\Big[Q_x(F_\lambda(x),\hat{c}-)F_\lambda'(x)-\sqrt{2\lambda}Q(F_\lambda(x),\hat{c}-)\Big]
\end{align}
by \eqref{Wstar2} and Proposition \ref{prop:DayKar}. To find an explicit expression of \eqref{eq:W1-00} we study $Q(y,\hat{c}-)$ for $y\in(e^{-2},\hat{y}_2(\hat{c}-))$ (see $(1)$ of Proposition \ref{monboundaries}). In particular from \eqref{WDayKar}, Remark \ref{rem:day} and Proposition \ref{monboundaries} (noting that $\hat{y}_1(c)<e^{-2}$ for $c<\hat{c}$) we find
\begin{align}\label{Qchat}
Q(y,\hat{c}-)=H_y(e^{-2}-,\hat{c})(y-e^{-2})+H(e^{-2}-,\hat{c})=-\tfrac{1}{\sqrt{2\lambda}} R(\hat{c})e^{-1}.
\end{align}
It then follows that $W^1(x,\hat{c})=W^o(x,\hat{c})$ by simple calculations, \eqref{Wo} and \eqref{def-F}.

For \eqref{eq:W1-01} we consider $Q_c(y,\hat{c}-)$ for $y\in(e^{-2},\hat{y}_2(\hat{c}-))$ and arguing as above we obtain
\begin{align}
Q_c(y,\hat{c}-)=\Big[H_{yc}(e^{-2}-,\hat{c})+H_{yy}(e^{-2}-,\hat{c})\hat{y}_1'(\hat{c}-)\Big](y-e^{-2})+H_c(e^{-2}-,\hat{c})=0
\end{align}
by \eqref{def-H2}, \eqref{def-Hy} and \eqref{def-Hyy}, hence $V_c(x,\hat{c}-)=0$ and $W^1_c(x,\hat{c})=W^o_c(x,\hat{c})=-x$ by recalling that $\Phi'(\hat{c})=-1$ (cf.~\eqref{def-chat}).

To conclude the proof we observe that $Q_y(y,\hat{c}-)=H_y(\hat{y}_1(\hat{c}-),\hat{c})=0$ for $y\in(e^{-2},\hat{y}_2(\hat{c}-))$, hence \eqref{Qchat} and \eqref{eq:W1-02} give $W^1_x(x,c)=W^o_x(x,\hat{c})=\Phi(\hat{c})+\phi_\lambda(x)R(\hat{c})e^{-1}$.
\end{proof}


\bigskip

\textbf{Acknowledgments.} The authors thank two anonymous referees for their pertinent and useful comments.

\end{document}